\theoremstyle{plain}
\newtheorem{Thm}{Theorem}[subsection]
\newtheorem{Thm*}{Theorem}[section]
\newtheorem{Cor}[Thm]{Corollary}
\newtheorem{Prop}[Thm]{Proposition}
\newtheorem{Lem}[Thm]{Lemma}
\newtheorem{Cl}[Thm]{Claim}
\theoremstyle{definition}
\newtheorem{Def}[Thm]{Definition}
\newtheorem{Emp}[Thm]{}
\newtheorem{Not}[Thm]{Notation}
\numberwithin{equation}{section}
\newcommand{\qlbar}{\overline{\B{Q}}_{\ell}}
\newcommand{\La}{\Lambda}
\newcommand{\ov}{\overline}
\newcommand{\B}[1]{\mathbb#1}
\newcommand{\cal}[1]{\mathcal{#1}}
\newcommand{\C}[1]{\cal#1}
\newcommand{\isom}{\overset {\thicksim}{\to}}
\newcommand{\form}[1]{(\ref{Eq:#1})}
\newcommand{\si}{\sigma}
\newcommand{\lra}{\longrightarrow}
\newcommand{\hra}{\hookrightarrow}
\newcommand{\wt}{\widetilde}
\newcommand{\Gm}{\Gamma}
\newcommand{\gm}{\gamma}
\newcommand{\dt}{\delta}
\newcommand{\Dt}{\Delta}
\newcommand{\bs}{\backslash}
\newcommand{\lan}{\langle}
\newcommand{\ran}{\rangle}
\newcommand{\al}{\alpha}
\newcommand{\la}{\lambda}
\newcommand{\rs}[1]{Section \ref{S:#1}}
\newcommand{\rl}[1]{Lemma \ref{L:#1}}
\newcommand{\rn}[1]{Notation \ref{N:#1}}
\newcommand{\rcl}[1]{Claim \ref{C:#1}}
\newcommand{\rp}[1]{Proposition \ref{P:#1}}
\newcommand{\re}[1]{\ref{E:#1}}
\newcommand{\rco}[1]{Corollary \ref{C:#1}}
\newcommand{\rt}[1] {Theorem \ref{T:#1}}
\newcommand{\rd}[1]{Definition \ref{D:#1}}
\newcommand{\sm}{\smallsetminus}
\newcommand{\on}{\operatorname}
\newcommand{\pr}{\operatorname{pr}}
\newcommand{\Ker}{\operatorname{Ker}}
\newcommand{\val}{\operatorname{val}}
\newcommand{\Spec}{\operatorname{Spec}}
\newcommand{\Aut}{\operatorname{Aut}}
\newcommand{\Gr}{\operatorname{Gr}}
\newcommand{\Ad}{\operatorname{Ad}}
\newcommand{\colim}{\operatorname{colim}}
\newcommand{\Tr}{\operatorname{Tr}}
\newcommand{\Fr}{\operatorname{Fr}}
\newcommand{\Lie}{\operatorname{Lie}}
\newcommand{\Fl}{\operatorname{Fl}}
\renewcommand{\sc}{\operatorname{sc}}
\newcommand{\pt}{\operatorname{pt}}
\newcommand{\Ext}{\operatorname{Ext}}
\newcommand{\Hom}{\operatorname{Hom}}
\newcommand{\Span}{\operatorname{Span}}
\begin{document}

\title[Homology of affine Springer fibers]%
{Semi-infinite orbits in affine flag varieties and homology of
affine Springer fibers}
\author{Roman Bezrukavnikov}
\address{Department of Mathematics\\
Massachusetts Institute of Technology\\
77 Massachusetts Avenue\\
Cambridge, MA 02139, USA}
\email{bezrukav@math.mit.edu}
\date{\today}
\author{Yakov Varshavsky}
\address{Einstein Institute of Mathematics\\
Edmond J. Safra Campus\\
The Hebrew University of Jerusalem\\
Givat Ram, Jerusalem, 9190401, Israel}
\email{yakov.varshavsky@mail.huji.ac.il}

\thanks{This research was partially supported by
the BFS grants 2016363 and 2020189. The research of R.B. was also partially supported by NSF grant
DMS-1601953 and by grants from the Institute for Advanced Study
and Carnegie Corporation of New York. The research of Y.V. was partially supported by the
ISF grants 822/17 and 2091/21.}
\begin{abstract}
Let $G$ be a connected reductive group over an
algebraically closed field $k$, and let $\Fl$ be the affine flag variety of $G$. For every regular semisimple element $\gm$ of $G(k((t)))$, the affine Springer fiber $\Fl_\gm$ can be presented as a union of closed subvarieties $\Fl^{\leq w}_{\gm}$,
defined as the intersection of $\Fl_{\gm}$ with an affine Schubert variety
$\Fl^{\leq w}$.


\smallskip

The main result of this paper asserts that
if elements $w_1,\ldots,w_n$ are sufficiently regular, then
the natural map $H_i(\bigcup_{j=1}^n
\Fl^{\leq w_j}_{\gm})\to H_i(\Fl_{\gm})$ is injective for every
$i\in\B{Z}$. It plays an important role in our work
\cite{BV}. One can view this statement as providing a categorification
of the notion of a weighted orbital integral. Along the way we also show that every affine Schubert variety
can be written as an intersection of closures of semi-infinite orbits.
\end{abstract}
\maketitle


\tableofcontents

\section*{Introduction}

 Let $k$ be an algebraically closed field, $K:=k((t))$ the
field of Laurent power series over $k$, and
$\C{O}=\C{O}_K=k[[t]]$ the ring of integers of $K$. Let $G$ be a
connected reductive group over $k$, and let $G^{\sc}$ be the simply-connected covering of
the derived group of $G$. For an algebraic group $H$ over $K$ (resp. $\C{O}$), we denote by $LH$ (resp. $L^+(H)$)
the corresponding loop (resp. arc) group.

\smallskip

We fix a maximal torus $T\subseteq G$ and an Iwahori subgroup scheme
$I\subseteq L^+(G)$ such that $I\cap LT=L^+(T)$, and let $T_{G^{\sc}}\subseteq G^{\sc}$
and $I^{\sc}\subseteq L^+(G^{\sc})$ be the corresponding maximal torus and the
Iwahori subgroups of $G^{\sc}$, respectively. Let $W=W_G$ be the Weyl group of $G$, let $\La=X_*(T_{G^{\sc}})$ be the group of cocharacters, and let $\wt{W}:=W\ltimes\La$ be the affine Weyl group of $G$.

\smallskip

Denote by $\Fl=L(G^{\sc})/I^{\sc}$ the affine flag variety of $G^{\sc}$. Then we have a natural embedding $\wt{W}\hra \Fl$. For
every $w\in\wt{W}$, we denote by $\Fl^{\leq w}\subseteq \Fl$ the
closure of the $I^{\sc}$-orbit $I^{\sc}w\subseteq \Fl$. Then each $\Fl^{\leq w}$
is a closed projective subscheme of $\Fl$, usually
referred to as the affine Schubert variety, while $\Fl$ is an inductive limit of
$\Fl^{\leq w}$.

\smallskip

For a regular semi-simple element $\gm\in G(K)$ we denote by $\Fl_{\gm}\subseteq \Fl$
the corresponding affine Springer fiber, i.e., the closed ind-subscheme of points $gI^{\sc}\in\Fl$ such that $g^{-1}\gm g\in I$.

\smallskip

Let $G_{\gm}$ be the centralizer of $\gm$ in $G$. It is a torus defined over $K$, and let
$S_{\gm}\subseteq G_{\gm}$ be the maximal $K$-split torus. We will always assume that $S_{\gm}$ is contained in $T_K$,
where $T_K$ denote the extension of scalars of $T$ to $K$.

\smallskip

For every ind-subscheme $Z\subseteq \Fl_G$, we denote by $Z_{\gm}$ the intersection
$Z\cap \Fl_{\gm}$. Then $\Fl_{\gm}$ is a union of the $\Fl^{\leq w}_{\gm}$, hence each homology group $H_i(\Fl_{\gm})$ is by
definition  the direct limit of $H_i(\Fl_{\gm}^{\leq w})$.
The main result of this paper implies that the canonical map
$H_i(\Fl_{\gm}^{\leq w})\to H_i(\Fl_{\gm})$ is injective if $w$ is
sufficiently regular.

\smallskip

More precisely, let $\pi:\wt{W}\to\wt{W}/W=\La$ be the projection.
For $m\in\B{N}$ we say that $w\in\wt{W}$ is {\em
$m$-regular}, if $|\langle\al,\pi(w)\rangle|\geq m$ for every root $\al$
of $(G,T)$. The main goal of this paper is to prove the following result
used in our companion work \cite{BV}.

\begin{Thm*} \label{T:inj}
 There exists $m\in\B{N}$ (depending on $\gm$) such that for every finite set
$w_1,\ldots,w_n$ of $m$-regular elements of $\wt{W}$ the natural map
$H_i(\bigcup_{j=1}^n \Fl^{\leq w_j}_{\gm})\to H_i(\Fl_{\gm})$ is
injective for every $i\in\B{Z}$.
\end{Thm*}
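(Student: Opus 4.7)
The abstract indicates the proof proceeds by first realizing each affine Schubert variety as an intersection of closures of $U_B(K)$-orbits (``semi-infinite Schubert varieties'') as $B$ varies over Borels containing $T$, then using this identity to produce a retraction of $\Fl_\gamma$ onto $\bigcup_j\Fl^{\leq w_j}_\gamma$. I would follow this outline.

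\textbf{Step 1 (geometric identity).} For each Borel subgroup $B\supset T$ of $G$, introduce the semi-infinite orbit $S^w_B:=U_B(K)\cdot w$ indexed by $w\in\widetilde W$, and let $\overline{S^w_B}$ denote its closure in $\Fl$. The core identity to establish is
\[
\Fl^{\leq w}\;=\;\bigcap_{B\supset T}\overline{S^{v_{B,w}}_B}
\]
for appropriately chosen elements $v_{B,w}\in\widetilde W$ depending on $B$ and $w$ (here $B$ ranges over the finitely many Borels of $G$ containing $T$). The containment $\subseteq$ should be a general fact: each $I^{\sc}$-orbit is contained in the appropriate $U_B(K)$-orbit closure for every choice of $B$, with $v_{B,w}$ determined by $w$ and the positive system cut out by $B$. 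The reverse containment is the geometric heart of the argument, and I expect it to follow from an induction on the length $\ell(w)$ together with a local analysis of the tangent cones of $\overline{S^v_B}$ at the $T$-fixed points of $\Fl$, expressed in terms of affine roots for $B$.

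\textbf{Step 2 (retraction and homological conclusion).} Each $\overline{S^w_B}$ is the attracting set of $w$ under the $\mathbb G_m$-action on $\Fl$ defined by a regular dominant cocharacter $\lambda_B\in X_*(T)$ of $B$. Using the identity of Step 1, I would combine the attracting flows for the various $B$'s to build a retraction $\rho:\Fl_\gamma\to\bigcup_{j=1}^n\Fl^{\leq w_j}_\gamma$. The $m$-regularity of each $w_j$ is used precisely to ensure that each $\overline{S^{v_{B,w_j}}_B}$ contains a sufficiently thick neighborhood of $w_j$ in the direction transverse to $B$, which prevents any point of $\Fl_\gamma$ from ``escaping'' the union under the combined flows. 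Once $\rho$ is in place, the inclusion $\bigcup_j\Fl^{\leq w_j}_\gamma\hookrightarrow\Fl_\gamma$ admits a homotopy left inverse, so the induced map on $H_i$ is a split injection for every $i\in\B{Z}$.

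\textbf{Main obstacle.} Step 2 is the hardest, and for a specific reason: the $\mathbb G_m$-actions on $\Fl$ defined by cocharacters $\lambda_B\in X_*(T)$ do \emph{not} preserve $\Fl_\gamma$, because $\lambda_B$ need not centralize $\gamma$ (and indeed if $\gamma$ is elliptic, the split centralizer $S_\gamma$ can be trivial). One cannot naively restrict the attracting flow. A plausible remedy is to argue via the long exact sequence in Borel--Moore homology of the pair $\bigl(\Fl_\gamma,\bigcup_j\Fl^{\leq w_j}_\gamma\bigr)$: the semi-infinite stratification from Step 1 should induce a filtration of the complement whose graded pieces are affine cells, forcing the relevant connecting homomorphisms to vanish for degree reasons. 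A secondary combinatorial difficulty is to check that the $m$-regularity of $w_j$ transfers correctly to the auxiliary elements $v_{B,w_j}$ produced in Step 1, so that the semi-infinite Schubert varieties in the intersection really do cover large enough neighborhoods of the $T$-fixed points of $\Fl_\gamma$.
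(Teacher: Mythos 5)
Your Step 1 matches the paper's Theorem \ref{T:tuple} (proved as \rt{sch}): every affine Schubert variety $\Fl^{\leq w}$ is the intersection $\cap_C \Fl^{\leq_C w_C}$ of semi-infinite orbit closures, and the associated tuple inherits $m$-regularity up to a uniform loss. That part of the plan is on target. The genuine gap is in Step 2. There is no retraction of $\Fl_\gm$ onto $\cup_j\Fl^{\leq w_j}_\gm$, and your fallback sketch does not work either: when one stratifies the complement by the $L(P^{\sc}_{\psi})$-orbits $Z_\nu$, the pieces $Z_\nu\cap\Fl_\gm$ are not affine cells --- they are affine Springer fibers for the Levi $M^{\sc}_{\psi}$ (via the retraction of \rp{strata}), so there is no ``degree reasons'' vanishing of connecting maps.

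You correctly identify that a generic $\lambda_B\in X_*(T)$ fails to preserve $\Fl_\gm$, but you do not then make the move the argument actually needs: restrict attention to the one-parameter subgroups $T_\psi$ with $\check\psi\in\La_\gm=X_*(S_\gm)$, which \emph{do} commute with $\gm$ and hence act on $\Fl_\gm$; and for the remaining $\psi$ with $\check\psi\notin\La_\gm$, prove separately (\rl{trunc}) that enlarging a sufficiently regular truncation $\ov{x}$ in the $\psi$-direction does not change $\Fl^{\leq'\ov{x}}_\gm$ at all. Even after this, the proof is not a retraction argument: the paper filters $\Fl_\gm$ by the truncations $\Fl^{\leq'\ov{x}_j}_\gm$ one unit at a time, reduces each injectivity statement on $H_i$ to a Borel--Moore statement on a finite-type slab via \rl{ker}, establishes the latter by $T_\psi$-equivariant localization (\rl{equiv}, \rco{crit}) using the stratum-wise affine-bundle retractions of \rp{strata}, and --- crucially --- the injectivity on the $T_\psi$-fixed locus is precisely the same theorem for $M^{\sc}_\psi$, so the whole proof is an induction on semisimple rank. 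Finally, the truncation step rests on a nontrivial stabilization statement (\rp{bound}) proved via Noetherianity of a Rees algebra over $\qlbar[\La_\gm]$. These ingredients --- the restriction to $T_\psi$ with $\check\psi\in\La_\gm$, the complementary stability lemma, the induction on semisimple rank via the Levi, and the Noetherian finiteness --- are absent from your sketch and are the heart of the matter.
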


If the group $G$ and element $\gm$ are defined over ${\mathbb F}_q$, the expression
\[
\big| (\bigcup_{j=1}^n \Fl^{\leq w_j}_{\gm})({\mathbb F}_q)\big|=\Tr(\Fr,H_*(\bigcup_{j=1}^n \Fl^{\leq w_j}_{\gm}))
\]
appears in computation of orbital integrals.

\smallskip

Theorem \ref{T:inj} allows one to interpret $H_*(\bigcup_{j=1}^n \Fl^{\leq w_j}_{\gm})$
as a term of a filtration on $H_*(\Fl_{\gm})$ which turns out to have favorable properties
with respect to the affine Springer action: it is a good filtration compatible with a natural
filtration on the group ring of the affine Weyl group. This provides a way to prove an expression for the
weighted orbital integral (or rather the closely related value of the averaging of a distribution)
in terms of  $H_*(\Fl_{\gm})$ equipped with an action an action of Frobenius and affine Springer
action; this application is developed in \cite{BV}. Thus Theorem \ref{T:inj} yields a categorification of the weighted orbital integral.

\smallskip

Theorem \ref{T:inj} will be deduced from a more general result. For each Borel subgroup $B\supseteq T$ of $G$, we denote
its unipotent radical by $U_{B}\subseteq G$. For every $w\in\wt{W}$,
we denote by $\Fl^{\leq_{B}w}\subseteq \Fl$ the closure of the
$U_{B}(K)$-orbit $U_{B}(K)w\subseteq \Fl$, which is called {\em the
semi-infinite orbit}. Then $\Fl^{\leq_{B}w}$ is a closed
ind-subscheme of $\Fl$.

\smallskip

We consider tuples $\ov{w}=\{w_B\}_B$ of elements of $\wt{W}$, where
$B$ runs over the set of all Borel subgroups  $B\supseteq T$ of $G$.
Most of the time will restrict ourselves to tuples, which are
{\em admissible} (see \rd{adm}) and {\em $m$-regular} (see \rn{kreg}).
In particular, the last assumption implies that each $w_B$ is $m$-regular.

\smallskip

For each tuple $\ov{w}$, we denote by $\Fl^{\leq\ov{w}}$
the reduced intersection $\bigcap_{B}\Fl_G^{\leq_{B}w_{B}}$. Each $\Fl^{\leq\ov{w}}$
is a projective scheme (see \rco{adm}(c)).

\smallskip

\rt{inj} follows from the following two results:

\begin{Thm*} \label{T:tuple}
For every $w\in\wt{W}$, there exists a unique admissible tuple
$\ov{w}$  such that $\Fl^{\leq w}=\Fl^{\leq\ov{w}}$.
Moreover, there exists $r\in\B{N}$ such that
for every $m\in\B{N}$ and every $(m+r)$-regular $w\in\wt{W}$, the
tuple $\ov{w}$ is $m$-regular.
\end{Thm*}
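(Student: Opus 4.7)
The plan is to establish the two assertions of the theorem in turn. For existence and uniqueness of the admissible tuple $\ov{w}$ with $\Fl^{\leq w}=\Fl^{\leq\ov{w}}$, I would exploit the Iwasawa-type decomposition $\Fl=\bigsqcup_{v\in\wt{W}}U_{B}(K)\cdot v$, valid for each Borel $B\supset T$. Since $\Fl^{\leq w}$ is an irreducible projective $T$-stable subvariety, it meets a unique $U_{B}(K)$-orbit $U_{B}(K)\cdot v$ in an open dense subset; define $w_B$ to be this $v$. Then $\Fl^{\leq w}\subset\overline{U_{B}(K)\cdot w_B}=\Fl^{\leq_{B}w_B}$ by construction, giving $\Fl^{\leq w}\subset\Fl^{\leq\ov{w}}$. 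For the reverse inclusion I would combine three ingredients: (i) intersecting two opposite semi-infinite closures $\Fl^{\leq_{B}w_B}\cap\Fl^{\leq_{B^-}w_{B^-}}$ already cuts out a finite-type closed subscheme of $\Fl$, so $\Fl^{\leq\ov{w}}$ is a projective variety; (ii) $\Fl^{\leq w}$ is irreducible of dimension $\ell(w)$ and sits inside $\Fl^{\leq\ov{w}}$, hence is one irreducible component of the latter; (iii) a combinatorial identification of the $T$-fixed points of $\Fl^{\leq\ov{w}}$ with $\{v\in\wt{W}:v\leq w\}$ rules out spurious components, since every component of a projective $T$-variety carries a $T$-fixed point. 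Uniqueness of $\ov{w}$ among admissible tuples then follows because admissibility (see \rd{adm}) singles out $w_B$ as the generic $U_{B}(K)$-orbit of $\Fl^{\leq w}$, which is an invariant of $w$.

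For the regularity bound, I would show that $w_B$ is obtained from $w$ by a controlled transformation. Writing $\sigma_B\in W$ for the Weyl element relating the standard Borel to $B$, one expects $\pi(w_B)=\sigma_B\pi(w)+c_B$ with a correction $c_B\in\La$ lying in a polytope depending only on the root datum of $G$ and on $B$ (and not on $w$). Setting $r:=\max_{B,\al}|\langle\al,c_B\rangle|$, for any $(m+r)$-regular $w$ and every root $\al$,
\[
|\langle\al,\pi(w_B)\rangle|\geq|\langle\sigma_B^{-1}\al,\pi(w)\rangle|-|\langle\al,c_B\rangle|\geq(m+r)-r=m,
\]
so $\ov{w}$ is $m$-regular.

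The main obstacle is the combinatorial identity invoked in (iii): one must show that the set of $T$-fixed points $v\in\wt{W}$ satisfying $v\leq_{B}w_B$ for every Borel $B\supset T$ coincides with the Bruhat lower set $\{v:v\leq w\}$. In effect, this recovers the affine Bruhat order on $\wt{W}$ as the intersection of the semi-infinite orders associated with the canonically chosen tuple $\{w_B\}$, and it is where the main technical work would concentrate.
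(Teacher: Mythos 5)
Your construction of $w_B$ as the generic $U_B(K)$-orbit of $\Fl^{\leq w}$ is essentially the same as the paper's (which characterizes $w_C$ as the unique $\leq_C$-maximal element of the lower Bruhat set $\wt W^{\leq w}$; by \rp{semiinf} these are the same thing), and the inclusion $\Fl^{\leq w}\subset\Fl^{\leq\ov{w}}$ then comes for free exactly as you say. The serious gap is in your step (iii). Knowing that $\Fl^{\leq\ov{w}}$ and $\Fl^{\leq w}$ have the same $T$-fixed points does \emph{not} rule out spurious components: a spurious irreducible component $Z\subset\Fl^{\leq\ov{w}}$ would carry a $T$-fixed point $v$, but that fixed point would simply lie in $\{v:v\leq w\}=\Fl^{\leq w}\cap\wt{W}$; nothing then forces $Z\subset\Fl^{\leq w}$. (For instance, $\{0,\infty\}\subset\B{P}^1$ has the same $T$-fixed points as $\B{P}^1$.) You would need a pointwise statement, not just a fixed-point statement. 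This is exactly what the paper proves as \rco{adm}(c): for any $z\in\Fl^{\leq\ov{w}}$ one picks $u$ with $z\in Iu$ and a chamber $C$ with $u\in C$; then $Iu\subset U_C(K)u$ (\rl{ineq}(b)), so $u\in\Fl^{\leq_C w_C}$, whence $u\leq_C w_C$ (\rp{semiinf}) and $u\leq w_C$ (\rco{order}(a)), giving $z\in\Fl^{\leq w_C}\subset\Fl^{\leq w}$. The paper then packages this as $\Fl^{\leq w}=X(w):=\cap_C\big(\cup_{w'\leq w}\Fl^{\leq_C w'}\big)$ and uses irreducibility of $\Fl^{\leq w}$ to extract a single tuple. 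Without something like \rl{ineq}(b) your fixed-point argument does not close.

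For the regularity estimate, the claimed identity $\pi(w_B)=\sigma_B\pi(w)+c_B$ with a $w$-independent bounded correction $c_B$ is the right heuristic, but you assert it without justification and the element $\sigma_B$ in fact depends on the $W$-component of $w$ as well as on $B$. The paper makes this precise via \rcl{ineq}: writing $w\in Ww_+$ with $w_+\in\wt W\cap C_0$, one shows $\mu^{-1}w_+\leq_{C_0}u^{-1}w_{u(C_0)}\leq_{C_0}w_+$ for a fixed regular $\mu\in\La\cap C_0$, which pins $\pi(u^{-1}w_{u(C_0)})$ to a bounded translate of $\pi(w_+)$ and gives the explicit $r$. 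Your proposal needs an argument of this kind to actually produce $r$; as it stands the regularity bound is only a plausible guess.
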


\begin{Thm*} \label{T:inj'}
 There exists $m\in\B{N}$ (depending on $\gm$) such that for every finite set
 $\ov{w}_1,\ldots,\ov{w}_n$ of $m$-regular admissible
tuples, the natural map
$H_i(\bigcup_{j=1}^n \Fl^{\leq \ov{w}_j}_{\gm})\to H_i(\Fl_{\gm})$ is
injective for all $i$.
\end{Thm*}

Notice that \rt{inj'} is essentially vacuous, if $\gm$ is elliptic. Indeed, in this case the affine Springer fiber
$\Fl_{\gm}$ is of finite type, so there exists an integer $m$ such that for every $m$-regular admissible tuple $\ov{w}$ we have an equality
$\Fl_{\gm}^{\leq\ov{w}}=\Fl_{\gm}$.

\smallskip

 To show the assertion in general, we use induction on the semisimple rank of $G$. Namely, if $\gm$ is not elliptic, then $\Fl_{\gm}$ is equipped with an action of a nontrivial torus $S$, and the scheme of
fixed points $\Fl_{\gm}^S$ is naturally isomorphic to a disjoint union of affine Springer fibers corresponding to a proper Levi subgroup $M$
of $G$. Thus an analog of \rt{inj'} for $\Fl_{\gm}^S$ holds by induction hypothesis, and we use finiteness properties of $H_i(\Fl_{\gm})$ and
localization theorem in equivariant cohomology to relate homology of $\Fl_{\gm}$ with that of $\Fl_{\gm}^S$.

\smallskip

The paper is organized as follows. In Section 1 we study orderings on affine
Weyl groups and introduce admissible tuples, which play a central role later. In Section 2 we
study semi-infinite orbits in affine flag varieties and their intersections, establish \rt{tuple},
and show technical results needed later. In Section 3 we study geometric properties of the affine Springer fibers, and establish a finiteness property of its homology.

\smallskip

Finally, in Section 4 we prove \rt{inj'} using results of the previous sections.
Namely, we review the localization theorem in the equivariant cohomology with compact support in subsection 4.1, give a criterion of an injectivity of the map on homology in subsection 4.2, and complete the proof in subsection 4.3.



\section{Combinatorics of affine Weyl groups}

\subsection{Preliminaries}

\begin{Emp} \label{E:root}
{\bf Roots}.

\smallskip

(a) Let $V$ be a finite dimensional vector space
over $\B{R}$, $V^*$ the dual space, and let $\Phi\subseteq V^*$ be a
(reduced) root system (see, for example, \cite{Be} or \cite[Section~VI]{Bo}).

\smallskip

(b) We denote by $\C{C}=\C{C}_{\Phi}$ the set of all Weyl chambers
$C\subseteq V$ of $\Phi$.  For each $C\in\C{C}$, we denote by
$\Phi_{C}\subseteq\Phi$ the set of $C$-positive roots, by
$\Dt_{C}\subseteq\Phi_C$ the set of $C$-simple roots, and by
$\Psi_C\subseteq V^*$ the set of $C$-fundamental weights.

\smallskip

(c) Denote by $\wt{\Phi}\supseteq\Phi$ the set of affine roots. Then
$\wt{\Phi}$ is a collection of pairs $(\al,n)\in\Phi\times\B{Z}$,
and every $\wt{\al}=(\al,n)$ is identified with an affine
function $\wt{\al}:V\to\B{R}$, given by the rule
$\wt{\al}(x)=\al(x)+n$. For each subset $\Phi'\subseteq \Phi$ and
$\wt{\al}=(\al,n)\in\wt{\Phi}$, we will say that $\wt{\al}\in
\wt{\Phi'}$ if $\al\in\Phi'$.

\smallskip

(d) Let $W=W_{\Phi}\subseteq \Aut (V)$ be the Weyl group of $\Phi$,
let $\La\subseteq V$ be the subgroup generates by coroots
$\{\check{\al}\}_{\al\in\Phi}$, and let $\wt{W}:=W\ltimes\La$ be
the affine Weyl group of $\Phi$. We will denote by $\pi$ the
natural projection $\wt{W}\to\wt{W}/W=\La$.

\smallskip

(e) The lattice $\La$ acts on $V$ by translations. Then the group
$\wt{W}$ act on $V$ by affine transformations, hence it acts on
$\wt{\Phi}$ by the rule $w(\wt{\al})(x)=\wt{\al}(w^{-1}(x))$ for
all $x\in V$. In particular, for each $\mu\in\La$ and
$(\al,n)\in\wt{\Phi}$, we have $\mu(\al,n)=(\al,n-\langle\al,\mu\rangle)$.

\smallskip

(f) For each $\wt{\al}\in\wt{\Phi}$, the affine reflection $s_{\wt{\al}}$ satisfies
$s_{\wt{\al}}(x)=x-\wt{\al}(x)\check{\al}$ for all  $x\in V$. In particular, for all $(\al,n)\in\wt{\Phi}$,
we have equality $s_{\al,n}=(-n\check{\al})s_{\al}\in\wt{W}$.

\smallskip

(g) For each $\al\in\Phi$, we denote by
$\wt{W}_{\al}\subseteq\wt{W}$ the subgroup generated by reflections
$s_{\wt{\al}}$, with $\wt{\al}=(\al,n), n\in\B{Z}$.
\end{Emp}

\begin{Emp} \label{E:weyl}
{\bf The fundamental Weyl chamber.}

\smallskip

(a) We fix a Weyl chamber
$C_0\in\C{C}$, and denote by $A_0$ the fundamental alcove such
that $A_0\subseteq C_0$ and such that $0\in V$ lies in the closure
of $A_0$.

\smallskip

(b) The choice of $C_0$ defines the set of positive roots
$\Phi_{>0}=\Phi_{C_0}\subseteq \Phi$ and the set of positive affine
roots $\wt{\Phi}_{>0}\subseteq \wt{\Phi}$. Explicitly,
$\wt{\al}=(\al,n)\in\wt{\Phi}$ is positive if and only if either $n>0$, or
$n=0$ and $\al>0$.

\smallskip

(c) Then $C_0$ defines  a set of
simple reflection $S\subseteq W$, and $A_0$ defines a set of simple
affine reflections $\wt{S}\subseteq\wt{W}$. In particular, a choice
of $C_0$ define length functions and Bruhat orders $\leq$ on
both  $W$ and $\wt{W}$.

\smallskip

(d) Using $A_0$, we identify each $w\in\wt{W}$ with the corresponding alcove
$w(A_0)\subseteq V$. In particular, will say that $w\in\wt{W}$
belongs to $C\in\C{C}$, or $w\in C$, if $w(A_0)\subseteq C$.
Explicitly, this means that $\langle\al,w(A_0)\rangle =\langle
w^{-1}(\al),A_0\rangle\geq 0$ for each  $\al\in\Phi_{C}$,
or, what is the same, $w^{-1}(\Phi_{C})\subseteq \wt{\Phi}_{>0}$.
\end{Emp}

\begin{Emp} \label{E:fund}
{\bf Fundamental weights.}

\smallskip

(a) We set $\Psi:=\bigcup_{C\in\C{C}}\Psi_C\subseteq V^*$. For $\psi\in\Psi$ and $C\in\C{C}$, we write
$C\owns\psi$, if $\psi\in\Psi_C$.

\smallskip

(b) Every $\psi\in\Psi$ gives rise to a fundamental coweight $\check{\psi}\in \La_{\B{Q}}:=\La\otimes_{\B{Z}}\B{Q}\subseteq V$.
Namely, $\check{\psi}$ is characterized by condition that for every  $C\in\C{C}$ such that $\psi\in \Psi_C$ and every
$\al\in\Dt_C$ we have $\lan\al,\check{\psi}\ran=\lan\psi,\check{\al}\ran$.
In particular, for every $C\in\C{C}$ we have $\psi\in \Psi_C$ if and only if $\check{\psi}$ lies in
the closure of $C$.

\smallskip

(c) For every $\psi\in\Psi$, we denote by $\Phi(\psi)$ (resp.
$\Phi^{\psi}$) the set of $\al\in\Phi$ such that
$\langle\al,\check{\psi}\rangle\geq 0$ (resp.
$\langle\al,\check{\psi}\rangle=0$). Notice that $\Phi^{\psi}$ is
a root system, and there is a bijection $C\mapsto C^{\psi}$
between Weyl chambers $C\owns\psi$ of $\Phi$ and Weyl chambers of
$\Phi^{\psi}$. This bijection satisfies the property that
$(\Phi^{\psi})_{C^{\psi}}=\Phi_{C}\cap \Phi^{\psi}$. We denote by $\wt{W}^{\psi}\subseteq \wt{W}$
the Weyl affine group of $\Phi^{\psi}$.

\smallskip

(d) For every $\psi\in\Psi$, we fix a Weyl chamber $C^{\psi}_0$ of $\Psi^{\psi}$. As in Section~\re{weyl}(b), this choice
defines the set of positive affine roots $\wt{\Phi}^{\psi}_{>0}\subseteq \wt{\Phi}^{\psi}$, and we denote by
$\wt{W}_{\psi}\subseteq \wt{W}$ the set of all $w\in\wt{W}$ such that
$w^{-1}(\wt{\Phi}^{\psi}_{>0})\subseteq\wt{\Phi}_{>0}$. Then for every
$w\in\wt{W}$ there exists a unique decomposition
$w=w^{\psi}w_{\psi}$, where $w^{\psi}\in\wt{W}^{\psi}$ and
$w_{\psi}\in\wt{W}_{\psi}$ (compare, for example, \cite[Lemma~B.1.7(b)]{BV}).
In other words, $\wt{W}_{\psi}\subseteq\wt{W}$ is a set of representatives
of the set of left cosets $\wt{W}^{\psi}\bs\wt{W}$.
\end{Emp}




\begin{Emp} \label{E:border}
{\bf Properties of the Bruhat order.}

\smallskip

(a) Let $w',w''\in\wt{W}$ and $s\in\wt{S}$ be such that $w'\leq w''$. Then we
have either $w's\leq w''s$ (resp. $sw'\leq sw''$) or $w's\leq w''$
and $w'\leq w''s$ (resp. $sw'\leq w''$ and $w'\leq sw''$) or both
(see, for example, \cite[Proposition~2.2.7]{BB}).

\smallskip

(b) Let $w',w''\in\wt{W}$ and $s\in\wt{S}$ be such that $sw'<w'$ and
$sw''<w''$. Then, by part~(a), we have $w'\leq w''$ if and only if $sw'\leq sw''$.

\smallskip

(c) Let $w,w'$ and $w''$ be elements of $\wt{W}$ such that
$l(ww')=l(w)+l(w')$ and $ww'\leq ww''$. Then  $w'\leq w''$.
Indeed,  if $w=s\in\wt{S}$, then the assertion follows from part~(a). The
general case follows by induction on $l(w)$. By a similar argument, if
$l(ww'')=l(w)+l(w'')$ and $w'\leq w''$, then $ww'\leq ww''$.

\smallskip

(d)  For every $\mu\in\La$ and $u\in W$, we have $l(u\mu
u^{-1})=l(\mu)$. Indeed, it is enough to show the assertion in the case
$u=s=s_{\al}$ for a simple root $\al$.  In this case, we have $s\mu s=\mu$,
if $\langle\al,\mu\rangle=0$; $s\mu>\mu>\mu s$ if
$\langle\al,\mu\rangle>0$; and $s\mu<\mu<\mu s$ if
$\langle\al,\mu\rangle<0$.

\smallskip

(e) Note that $w\in\wt{W}$ belongs to  $C_0$ if and only if
$l(sw)>l(w)$ for every  $s\in S$. In other words,
$\wt{W}\cap C_0$ is the set of the shortest representatives of cosets
$W\bs\wt{W}$. In particular, for every $w\in \wt{W}\cap
C_0$ and $u\in W$ we have $l(uw)=l(u)+l(w)$ and for every $u\leq u'$ in $W$
we have $uw\leq u'w$.

\smallskip

(f) The characterization of $C_0$ given in part~(e) implies that for
every $w\in\wt{W}\cap C_0$ and  $s\in\wt{S}$ with $ws<w$, we
have $ws\in C_0$.

\smallskip

(g) For every $u\in W$ and every $\mu\in\La\cap C_0$ we have
$u\leq\mu$. Indeed, it is enough to show that $u\leq_R\mu$ (see
\cite[Definition~3.1.1]{BB}). Hence, by \cite[Proposition~3.1.3]{BB}, it is
enough to show that for every affine root $\wt{\al}>0$  such
that $u(\wt{\al})<0$ we have $\mu(\wt{\al})<0$. If
$\wt{\al}=(\al,n)>0$ satisfies $u(\wt{\al})=(u(\al),n)<0$, then
$n=0$, and $\al>0$. Hence
$\mu(\wt{\al})=(\al,-\langle\al,\mu\rangle)<0$, because $\mu\in
C_0$ is regular, thus $\langle\al,\mu\rangle>0$.
\end{Emp}


\begin{Lem} \label{L:border}
Assume that $w',w''\in C\cap \wt{W}$ for some $C\in\C{C}$ and
$w'<w''$. Then

\smallskip

(a) for every $u\in W$, we have $uw'<uw''$;

\smallskip

(b) there exists a sequence
$w'<w_1<\ldots<w_n=w''$ such that $w_i\in C$ and $l(w_i)=l(w')+i$
for each $i$;

\smallskip

(c) for every $\mu\in\La\cap C$ and $w\in \wt{W}\cap C$, we have
$l(\mu w)=l(\mu)+ l(w)$.


\end{Lem}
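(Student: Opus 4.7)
I would treat the three parts separately, reducing (a) and (b) to the case $C = C_0$ via the action of $W$, and proving (c) by a direct inversion-set computation.

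For (a), the plan is to fix $v \in W$ with $v(C_0) = C$, so that $w \in C \cap \wt{W}$ if and only if $v^{-1}w \in C_0 \cap \wt{W}$. Writing $w' = vz'$ and $w'' = vz''$ with $z', z'' \in C_0$, I would use \re{border}(e) to obtain $l(w') = l(v) + l(z')$ and $l(w'') = l(v) + l(z'')$, hence $l(z') < l(z'')$; then \re{border}(c) applied to $vz' \leq vz''$ forces $z' \leq z''$, so $z' < z''$. For any $u \in W$, \re{border}(e) again gives $l(uw') = l(uv) + l(z')$ and $l(uw'') = l(uv) + l(z'')$, and the ``similarly'' direction of \re{border}(c) turns $z' \leq z''$ into $(uv)z' \leq (uv)z''$, i.e.\ $uw' \leq uw''$, with strictness from the length comparison.

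For (c), I would use the Coxeter-group identity
\[
l(xy) = l(x) + l(y) - 2\,\bigl|\Inv(x) \cap \Inv(y^{-1})\bigr|,
\]
where $\Inv(w) = \{\wt{\al} \in \wt{\Phi}_{>0} : w(\wt{\al}) < 0\}$, and reduce the claim to $\Inv(\mu) \cap \Inv(w^{-1}) = \emptyset$. The formula $\mu(\al,n) = (\al, n - \lan\al,\mu\ran)$ from \re{root}(e), together with the fact that $\mu \in C$ gives $\lan\al,\mu\ran > 0$ exactly for $\al \in \Phi_C$, implies by a direct case check that every $(\al, n) \in \Inv(\mu)$ satisfies $\al \in \Phi_C$ and $n \geq 0$. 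On the other hand, $w \in C$ means $w^{-1}(\gm) \in \wt{\Phi}_{>0}$ for every $\gm \in \Phi_C$; since $w^{-1}$ is an affine map, $w^{-1}(\al, n)$ differs from $w^{-1}(\al, 0)$ only by adding $n$ to the constant part, so $w^{-1}(\al, n) > 0$ for all $\al \in \Phi_C$ and $n \geq 0$. Thus no such affine root can lie in $\Inv(w^{-1})$, and disjointness follows.

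For (b), I would again reduce to $C = C_0$ via $v$: a saturated chain $z' < z_1 < \ldots < z_n = z''$ in $C_0 \cap \wt{W}$ with $l(z_i) = l(z') + i$ lifts under left multiplication by $v$ to the required chain in $C$, by \re{border}(e). The reduced problem would be treated by induction on $n = l(z'') - l(z')$, the case $n = 1$ being trivial. The induction step amounts to finding $z^* \in C_0 \cap \wt{W}$ with $z' \leq z^* \lessdot z''$. The natural candidate is $z^* = z''s$ for $s \in \wt{S}$ with $z''s < z''$: by \re{border}(f) such $z''s$ lies in $C_0$, and a subword-removal argument combined with the lifting property \re{border}(a) shows that $z' \leq z''s$ whenever $z's > z'$. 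Thus the step succeeds unless every $s$ with $z''s < z''$ also satisfies $z's < z'$; in this remaining case, a secondary induction on $l(z'')$ applied to $(z's, z''s)$ (both in $C_0$ by \re{border}(f), with $z's \leq z''s$ by \re{border}(b)) would allow one to lift the resulting shorter chain. The main obstacle is the combinatorial bookkeeping in this last step, which amounts to the classical grading property of the induced Bruhat order on the parabolic quotient $W \bs \wt{W}$ (whose minimal-length representatives are precisely $C_0 \cap \wt{W}$).
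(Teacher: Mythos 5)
Your parts (a) and (c) are correct. For (a) you reduce to $C_0$ and then apply \re{border}(c),(e) twice, whereas the paper inducts on $l(u)$ over simple reflections and invokes \re{border}(b); both routes go through. Your argument for (c) is genuinely different and nicer: the paper inducts on $l(w)$, tracking what happens to a single affine root $w(\al)$ under $\mu$, while you use the inversion-set identity $l(xy)=l(x)+l(y)-2\bigl|\Inv(x)\cap\Inv(y^{-1})\bigr|$ and verify in one pass that every affine root in $\Inv(\mu)$ has the form $(\al,n)$ with $\al\in\Phi_C$ and $n\geq 0$, while no such affine root lies in $\Inv(w^{-1})$ when $w\in C$. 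This replaces the paper's induction with a single disjointness check; it is shorter and makes the mechanism transparent.

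Part (b), however, has a genuine gap. You correctly set up the two-level induction (outer on $n=l(z'')-l(z')$, reducing to producing one intermediate element of $C_0\cap\wt{W}$ between $z'$ and $z''$; inner on $l(z'')$, applied to the pair $(z's,z''s)$ when $z's<z'$ for every $s$ with $z''s<z''$). But you stop at ``would allow one to lift the resulting shorter chain,'' deferring to unspecified ``combinatorial bookkeeping,'' and that deferral lands exactly on the only nontrivial point. The inner induction hands you some $w\in C_0$ with $z's<w<z''s$; the candidate intermediate element is $ws$, and three things must be checked: (i) $ws>w$ — otherwise \re{border}(a) applied to $z's\leq w$ with $ws<w$ gives $z'\leq w<z''s$, contradicting the case hypothesis $z'\not<z''s$; (ii) $z'<ws<z''$ — this follows from \re{border}(a) once $ws>w$; and, crucially, (iii) $ws\in C_0$ — if not, then since $w\in C_0$ one must have $ws=s_\al w$ for some $\al\in\Dt_{C_0}$, and from $z'<s_\al w$ together with $z'\in C_0$, \re{border}(c),(e) again force $z'\leq w<z''s$, a contradiction. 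None of (i)--(iii) is automatic, and appealing to ``the classical grading property of the induced Bruhat order on $W\bs\wt{W}$'' is circular here, since that grading property is precisely the content of (b) in this setting. You need to supply the argument for $ws\in C_0$; the rest of your plan then closes.
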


\begin{proof}
(a) By induction, it is enough to show that for every element $s\in S$ we
have $sw'<sw''$. By Section~\re{border}(b), it is enough to show that
$w'<sw'$ if and only if $w''<sw''$. Let $u\in W$ be such that
$C=u(C_0)$. Then it follows from Section~\re{border}(e) that each
condition $w'<sw'$ and $w''<sw''$ is equivalent to $u<su$.

\smallskip

(b) Using part~(a) and Section~\re{border}(e), we may assume that $C=C_0$. If the difference
$l(w'')-l(w')=1$, there is nothing to prove, so we can assume that
$l(w'')-l(w')>1$. By induction, it is enough to show the existence
of $w\in C_0$ such that $w'<w<w''$.

Choose $s\in\wt{S}$ such that $w''s<w''$. Then $w''s\in C_0$ by
Section~\re{border}(f). If $w'<w''s$, then  $w:=w''s$ does the job. If
not, then by Section~\re{border}(a) we get $w's<w'$ and $w's<w''s$. Then
by Section~\re{border}(f), we have  $w's\in C_0$, so by induction on $l(w'')$,
there exist $w\in C_0$ such that $w's<w<w''s$.

If $ws<w$, then it follows from Section~\re{border}(a) that $w'\leq
w<w''s$, contradicting our assumption. Hence we may assume that
$ws>w$, in which case by Section~\re{border}(a) we have $w'<ws<w''$, thus
it is enough to show that $ws\in C_0$.

Assume that $ws\notin C_0$. Since $w\in C_0$, this would imply
that there exists a simple root $\al$ of $C_0$ such that
$ws=s_{\al}w$. Then we have  $w'<s_{\al}w$ and $w'\in C_0$
therefore by Section~\re{border}(c) that $w'\leq w<w''s$, contradicting
the assumption.

\smallskip

(c)  Using Sections~\re{border}(d),(e), we can assume that $C=C_0$. Now
the proof goes by induction on $l(w)$. Choose $s\in\wt{S}$ such
that $ws<w$. Then $ws\in C_0$ by Section~\re{border}(f), hence by the
induction hypothesis, we have
\[
l(\mu ws)=l(\mu)+l(ws)=l(\mu)+l(w)-1.
\]
Thus it is enough to show that
$\mu ws<\mu w$.

Let $\al$ be a simple affine root such that $s=s_{\al}$. Then
$\wt{\beta}:=w(\al)<0$, because $ws<w$, and we want to show that
$\mu(\wt{\beta})=\mu w(\al)<0$. Write $\wt{\beta}$ in the form
$(\beta,n)$, where $\beta\in\Phi$. Then
$\mu(\wt{\beta})=\wt{\beta}-\langle\beta,\mu\rangle$, so it
remains to show that $\langle\beta,\mu\rangle\leq 0$.

Since $\wt{\beta}<0$, we get $n\leq 0$, therefore
$w^{-1}(\beta)=\al-n>0$. This implies that $\beta\in\Phi_{C_0}$,
because $w\in C_0$, hence $\langle\beta,\mu\rangle\leq 0$, because
$\mu\in C_0$.
\end{proof}





\subsection{Orderings on affine Weyl groups}

\begin{Not} \label{N:ord}
(a) Let $\wt{\al}\in\wt{\Phi}$ and $w\in\wt{W}$. We say that
$s_{\wt{\al}}w<_{\wt{\al}}w$ if $w^{-1}(\wt{\al})>0$.

\smallskip

(b) Let $\Phi'\subseteq \Phi$ be a subset, and
$w',w''\in\wt{W}$. We say that $w''<_{\Phi'} w'$ if there
exist affine roots $\wt{\al}_1,\ldots,\wt{\al}_n\in\wt{\Phi'}$ such that
$s_{\wt{\al}_i}\ldots
s_{\wt{\al}_1}w'<_{\wt{\al}_i}s_{\wt{\al}_{i-1}}\ldots
s_{\wt{\al}_1}w'$ for all $i$, and $w''=s_{\wt{\al}_n}\ldots s_{\wt{\al}_1}w'$.
For $\al\in\Phi$, we write $w''<_{\al} w'$ instead of $w''<_{\{\al\}} w'$.

\smallskip

(c) Let $\Phi'\subseteq \Phi$, and $x',x''\in V$. We say that
$x''<_{\Phi'} x'$ if the difference $x'-x''$ is a positive linear
combination of elements $\check{\al}$ with $\al\in\Phi'$.
For $\al\in\Phi$, we write $x''<_{\al} x'$ instead of $x''<_{\{\al\}} x'$.

\smallskip

 (d) For each $C\in\C{C}$, $\psi\in\Psi$ (and $\psi\in C$),
we write  $<_C$ (resp. $<_{\psi}$, resp. $<_{C^{\psi}}$)
instead of $<_{\Phi_C}$ (resp. $<_{\Phi(\psi)}$, resp.
$<_{\Phi^{\psi}(C^{\psi})}$).
\end{Not}

The following lemma explain a connection between these
notions.

\begin{Lem} \label{L:ord}
(a) For each $\wt{\al}=(\al,n)\in\wt{\Phi}$ and $w\in\wt{W}$, we have
$s_{\wt{\al}}w<_{\wt{\al}}w$ (see Section~\ref{N:ord}(a)) if and only if
$s_{\wt{\al}}w(x)<_{\al}w(x)$ (see Section~\ref{N:ord}(c)) for all $x\in A_0$.

\smallskip

(b) For each $\al\in\Phi$ and $w\in\wt{W}$, we have
$w<_{\al}\check{\al}w$ (see Section~\ref{N:ord}(b))
\end{Lem}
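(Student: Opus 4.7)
For part (a), the plan is just to unwind definitions. Applying formula \re{root}(f) with $y=w(x)$ gives $s_{\wt\al}w(x)=w(x)-\wt\al(w(x))\check\al$, so by Notation \ref{N:ord}(c) with $\Phi'=\{\al\}$, the condition $s_{\wt\al}w(x)<_\al w(x)$ for all $x\in A_0$ is equivalent to $\wt\al(w(x))>0$ for all $x\in A_0$. On the other hand, \re{root}(e) gives $w^{-1}(\wt\al)(x)=\wt\al(w(x))$, and the standard fact that $\wt\Phi_{>0}=\{\wt\beta\in\wt\Phi:\wt\beta|_{A_0}>0\}$ (which follows because $A_0$ is cut out by the positivity of the simple affine roots, and every positive affine root is a non-negative $\B{Z}$-combination of simple ones) shows that $w^{-1}(\wt\al)\in\wt\Phi_{>0}$ is also equivalent to $\wt\al(w(x))>0$ on $A_0$. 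The two conditions thus match.

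For part (b), the plan is to construct an explicit two-step reduction in the sense of Notation \ref{N:ord}(b). The key geometric observation is that the open alcove $w(A_0)$ meets no affine root hyperplane, so there is a unique integer $N$ with $\al(w(A_0))\subset(N,N+1)$; translating by $\check\al$ gives $\al(\check\al w(A_0))\subset(N+2,N+3)$. I then take
\[
\wt\al_1:=(\al,-(N+2)),\qquad \wt\al_2:=(\al,-(N+1)),\qquad y_0:=\check\al w,\qquad y_i:=s_{\wt\al_i}y_{i-1}.
\]
Using the identities $s_{\al,m}=(-m\check\al)s_\al$ from \re{root}(f), $s_\al(\check\al)=-\check\al$, and $s_\al^2=e$, a direct computation yields $s_{\wt\al_2}s_{\wt\al_1}=(N+1)\check\al\cdot s_\al\cdot (N+2)\check\al\cdot s_\al=\bigl((N+1)-(N+2)\bigr)\check\al=-\check\al$, so $y_2=-\check\al\cdot\check\al w=w$ as required.

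It then remains to verify the descent conditions $y_i<_{\wt\al_i}y_{i-1}$. For $x\in A_0$, $\wt\al_1(y_0(x))=\al(w(x))+2-(N+2)=\al(w(x))-N>0$ by the choice of $N$; applying the reflection formula from \re{root}(f), $\al(y_1(x))=2(N+2)-\al(y_0(x))\in(N+1,N+2)$, so $\wt\al_2(y_1(x))=\al(y_1(x))-(N+1)\in(0,1)>0$. Part (a) then converts these pointwise positivity statements into the descents required by Notation \ref{N:ord}(b), establishing $w<_\al\check\al w$. The only substantive point is the geometric observation that $w(A_0)$ is trapped between two consecutive hyperplanes of the $\al$-strand; once $N$ is pinned down, the two required affine roots and all the verifications are essentially forced.
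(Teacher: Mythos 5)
Your proof is correct and follows essentially the same strategy as the paper's. Part (a) is the same unwinding of the reflection formula; for part (b), your affine roots $\wt\al_1=(\al,-(N+2))$ and $\wt\al_2=(\al,-(N+1))$ are exactly the paper's $\wt\al$ and $\wt\al+1$ (the paper picks $r$ so that $0<\wt\al(\check\al w(x))<1$, which forces $r=-(N+2)$ in your notation), and the computation that the composite of the two reflections equals translation by $-\check\al$ is the same; your presentation is slightly more explicit about why $N$ is independent of $x$, but otherwise the two arguments match step for step.
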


\begin{proof}
(a) Fix $x\in A_0$. Then $w^{-1}(\wt{\al})>0$ if and only if
$w^{-1}(\wt{\al})(x)=\wt{\al}(w(x))>0$. Thus $s_{\wt{\al}}w<_{\wt{\al}}w$ if and only if
$s_{\wt{\al}}w(x)=w(x)-\wt{\al}(w(x))\check{\al}<_{\al}w(x)$.

\smallskip

(b) Let $r\in\B{Z}$ such that the affine root $\wt{\al}=(\al,r)$ satisfies $0<\wt{\al}(\check{\al}w(x))<1$.
Since $\wt{\al}(s_{\wt{\al}}(\check{\al}w(x)))=- \wt{\al}(\check{\al}w(x))$, we get
$0<(\wt{\al}+1)(s_{\wt{\al}}(\check{\al}w(x)))<1$. Thus, by the observation of part~(a) we have
$w=s_{\wt{\al}+1}s_{\wt{\al}}(\check{\al}w)<_{\wt{\al}+1}s_{\wt{\al}}(\check{\al}w)<_{\wt{\al}}\check{\al}w$, hence $w<_{\al}\check{\al}w$.
\end{proof}

\begin{Cor} \label{C:ord}
(a) For each $w,w'\in\wt{W}$ and $\al\in\Phi$, we have
$w<_{\al}w'$ if and only if we have $w\in\wt{W}_{\al}w'$ and
$w(x)<_{\al}w'(x)$ for all $x\in A_0$.

\smallskip

(b) For each $\Phi'\subseteq\Phi$ and $w,w'\in\wt{W}$ with
$w<_{\Phi'} w'$, we have $w(x)<_{\Phi'} w'(x)$ for each $x\in A_0$,
hence  $\pi(w)\leq_{\Phi'} \pi(w')$ in the sense of Section~\ref{N:ord}(c).

\smallskip

(c) Let $\Phi'\subseteq\Phi$ has a property that if $\mu\in\La$ is a positive linear
combination of elements $\check{\al}$ with $\al\in\Phi'$, then $\mu$ is a finite sum of
elements $\check{\al}$ with $\al\in\Phi'$. Then for every $\mu,\mu'\in\La$, we have
$\mu<_{\Phi'}\mu'$ in the sense of Section~\ref{N:ord}(b) if and only if
$\mu<_{\Phi'}\mu'$ in the sense of Section~\ref{N:ord}(c).
\end{Cor}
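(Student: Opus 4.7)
My plan is to prove the three parts in the order (b), (a), (c), since (c) will follow quickly from (b) and (a) is the hardest.

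For (b), I would induct on the length $n$ of the chain $\wt{\al}_1,\ldots,\wt{\al}_n \in \wt{\Phi'}$ witnessing $w<_{\Phi'}w'$ in the sense of \ref{N:ord}(b). Setting $w_i := s_{\wt{\al}_i}\cdots s_{\wt{\al}_1}w'$, \rl{ord}(a) gives $w_i(x)<_{\al_i}w_{i-1}(x)$ for every $x\in A_0$, so $w_{i-1}(x)-w_i(x)=c_i\check{\al}_i$ with $c_i>0$. Telescoping,
\[
w'(x)-w(x)=\sum_{i=1}^n c_i\check{\al}_i,
\]
a positive linear combination of $\{\check{\al}:\al\in\Phi'\}$, proving $w(x)<_{\Phi'}w'(x)$. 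For the $\pi$-inequality, decompose $v(x)=\overline{v}(x)+\pi(v)$ for $v\in\wt{W}$, so that $w'(x)-w(x)=(\overline{w'}(x)-\overline{w}(x))+(\pi(w')-\pi(w))$; letting $x\to 0\in\overline{A_0}$ (which is in the closure of $A_0$ by \re{weyl}(a)) exhibits $\pi(w')-\pi(w)$ as a limit of positive combinations, hence in the closed cone, so $\pi(w)\leq_{\Phi'}\pi(w')$.

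For (a), the "only if" direction is the special case $\Phi'=\{\al\}$ of (b), once one notes that each $s_{\wt{\al}_i}$ with $\wt{\al}_i=(\al,n_i)$ belongs to $\wt{W}_{\al}$, so $w\in\wt{W}_{\al}w'$. For the "if" direction, write $w=\tau w'$ with $\tau\in\wt{W}_{\al}$; since $\wt{W}_{\al}$ is the infinite-dihedral affine Weyl group of $\{\pm\al\}$, whose elements act on $V$ modifying only the $\check{\al}$-component, the problem reduces to a one-dimensional gallery argument along $\check{\al}$. I would induct on the number of affine hyperplanes $\{\wt{\beta}=0:\wt{\beta}\in\wt{\Phi}_{\{\al\}}\}$ separating the alcoves $w'(A_0)$ and $w(A_0)=\tau w'(A_0)$; since $w(x)<_{\al}w'(x)$ on $A_0$, the latter sits strictly below the former in the $\check{\al}$-direction. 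Choose the hyperplane $\wt{\al}_1=0$ that bounds $w'(A_0)$ on the side facing $w(A_0)$: then $\wt{\al}_1(w'(x))>0$ for $x\in A_0$, so by \rl{ord}(a) one has $s_{\wt{\al}_1}w'<_{\wt{\al}_1}w'$, and $s_{\wt{\al}_1}w'(A_0)$ is separated from $w(A_0)$ by strictly fewer walls; induction applied to $(w,s_{\wt{\al}_1}w')$ produces the rest of the chain.

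For (c), in the "if" direction we are given $\mu'-\mu$ as a positive combination of $\check{\al}$'s with $\al\in\Phi'$; the hypothesis on $\Phi'$ lets us write $\mu'-\mu=\check{\al}_1+\cdots+\check{\al}_k$ with $\al_j\in\Phi'$, and applying \rl{ord}(b) $k$ times yields
\[
\mu<_{\al_1}\check{\al}_1\mu<_{\al_2}\check{\al}_2\check{\al}_1\mu<_{\al_3}\cdots<_{\al_k}\check{\al}_k\cdots\check{\al}_1\mu=\mu',
\]
and concatenation gives $\mu<_{\Phi'}\mu'$ in the sense of \ref{N:ord}(b). The "only if" direction is immediate from (b): since $\mu,\mu'\in\La$ act by pure translation, $\mu'(x)-\mu(x)=\mu'-\mu$ for every $x$, which by (b) is a positive combination of $\{\check{\al}:\al\in\Phi'\}$. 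The main technical obstacle is the "if" direction of (a), since producing a chain of reflections that is \emph{simultaneously} decreasing for every $x\in A_0$ looks delicate; however, the reduction to the rank-one infinite-dihedral picture, combined with \rl{ord}(a), makes the construction purely one-dimensional and routine.
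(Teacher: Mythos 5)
Your proof is correct, but for part~(a) you take a genuinely longer route than the paper.  For the ``if'' direction of~(a), instead of your gallery/wall-crossing induction, the paper simply observes that any nontrivial element $u$ of the infinite-dihedral group $\wt{W}_{\al}$ is either a single reflection $s_{\wt{\al}}$ (with $\wt{\al}=(\al,n)$) or a pure translation $\check{\al}^m$; in the first case the conclusion is exactly \rl{ord}(a), and in the second case the hypothesis $w(x)<_{\al}w'(x)$ forces $m<0$, so one just iterates \rl{ord}(b).  This dichotomy avoids the gallery argument entirely and sidesteps the bookkeeping you flag as the ``main technical obstacle'' (tracking which affine hyperplanes $\al=-n$ separate the alcoves, verifying that the count strictly decreases, and identifying the base case).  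Your gallery argument does go through — the alcoves have $\al$-range of width less than~$1$, so a nontrivial $\check{\al}$-translation or $\wt{\al}$-reflection always produces at least one separating wall, and crossing the wall nearest $w'(A_0)$ decreases the count — but you do not spell these points out, and the reflection-vs-translation case split is cleaner.  Your treatments of~(b) (telescoping the chain, then evaluating at $0\in\overline{A_0}$) and~(c) (reduction to $\mu'-\mu=\check{\al}$ plus \rl{ord}(b), with the converse from~(b)) match the paper's essentially line for line.
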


\begin{proof}
(a) If  $w<_{\al}w'$, then $w\in\wt{W}_{\al}w'$ (by definition),
and $w(x)<_{\al}w'(x)$ for all $x\in A_0$ (by \rl{ord}(a)). Conversely, assume that $w=uw'$ with $u\in\wt{W}_{\al}$ such that
$w(x)<_{\al}w'(x)$ for all $x\in A_0$. Then we have either
$u=s_{\wt{\al}}$ or $u=\check{\al}^m$ for some $m\in\B{Z}_{<0}$. In the
first case, we have $w<_{\al}w'$ by \rl{ord}(a), while in the
second one we have $w<_{\al}w'$ by \rl{ord}(b).

\smallskip

(b) By definition, it is enough to assume that
$w=s_{\wt{\al}}w'<_{\wt{\al}}w'$. In this case, the first assertion follows
from \rl{ord}(a). Next since $0\in V$ lies in the closure of $A_0\subseteq V$,
the second one follows from the equality $\pi(w)=w(0)$.

\smallskip

(c) Assume that $\mu<_{\Phi'}\mu'$ in the sense of Section~\ref{N:ord}(c).
By our assumption of $\Phi'$, we may assume that $\mu=\mu'-\check{\al}$ for some
$\al\in\Phi'$. In this case, it follows from \rl{ord}(b) that $\mu<_{\Phi'}\mu'$ in the sense of Section~\ref{N:ord}(b).
The converse assertion follows from part~(b).
\end{proof}

\begin{Emp} \label{E:order}
{\bf Remarks.} (a) Let $\Phi'\subseteq\Phi$, let $w\in\wt{W}$, and let $\ov{w}\in W$ be the image of $w\in\wt{W}$
under the projection $\wt{W}\to W$. Then it follows from definition that for every $w'\leq_{\Phi'} w''$ we have $w w'\leq_{\ov{w}(\Phi')} ww''$.
In particular,

\smallskip

\quad\quad (i) for every $\mu\in\La$, we have $w'\leq_{\Phi'} w''$ if and only if
$\mu w'\leq_{\Phi'} \mu w''$;

\smallskip

\quad\quad (ii)  for every $u\in W$, we have $w'\leq_{\Phi'} w''$ if and only if
$uw'\leq_{u(\Phi')} uw''$.

\smallskip

(b) Note that for each $\al\in\Phi$, the subset $\Phi':=\{\al\}$ satisfies the assumption of \rco{ord}(c).
\end{Emp}

\begin{Prop} \label{P:order}
Let $w',w''\in\wt{W}$, and let $C$ be a Weyl chamber.

\smallskip

Then  $w'\leq_C w''$ if and only if for every sufficiently regular
$\mu\in \La\cap C$ we have $\mu w'\leq\mu w''$, that is, there
exists $\mu\in \La\cap C$ such that $\mu'\mu w'\leq\mu'\mu w''$
for every $\mu'\in\La\cap C$.
\end{Prop}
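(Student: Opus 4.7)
The plan is to prove both directions by passing between the relation $<_C$ and the Bruhat order via translation by a sufficiently regular element $\mu\in\La\cap C$. Three ingredients will do the work: the translation-invariance \re{order}(a)(i), the formula $\mu(\al,n)=(\al,n-\langle\al,\mu\rangle)$ from \re{root}(e), and the standard Coxeter-theoretic criterion that for a reflection $s_{\wt{\gamma}}\in\wt{W}$ and $y\in\wt{W}$, $y<s_{\wt{\gamma}}y$ in Bruhat order if and only if $y^{-1}(\wt{\gamma})$ and $\wt{\gamma}$ have the same sign. The bridge between the two orders is that a deeply regular $\mu\in\La\cap C$ flips the sign of every affine root whose linear part lies in $\Phi_C$.

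For the forward direction, unwind $w'\leq_C w''$ into a chain $w''=w_0,w_1,\ldots,w_n=w'$ with $w_i=s_{\wt{\al}_i}w_{i-1}$, $\wt{\al}_i=(\al_i,n_i)\in\wt{\Phi_C}$, and $w_{i-1}^{-1}(\wt{\al}_i)>0$. Pick $\mu\in\La\cap C$ with $\langle\al_i,\mu\rangle>n_i$ for each $i$, so that every $\mu(\wt{\al}_i)$ becomes a negative affine root. The identity $\mu w_i=s_{\mu(\wt{\al}_i)}\mu w_{i-1}$ combined with $(\mu w_{i-1})^{-1}(\mu(\wt{\al}_i))=w_{i-1}^{-1}(\wt{\al}_i)>0$ then yields $\mu w_i<\mu w_{i-1}$ in Bruhat order by the criterion above applied to the negative root $\mu(\wt{\al}_i)$. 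Concatenation gives $\mu w'\leq\mu w''$; since $\langle\al_i,\mu'\mu\rangle\geq\langle\al_i,\mu\rangle>n_i$ for every $\mu'\in\La\cap C$, the same argument with $\mu$ replaced by $\mu'\mu$ yields $\mu'\mu w'\leq\mu'\mu w''$, as required.

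For the converse, suppose there exists $\mu\in\La\cap C$ such that $\mu'\mu w'\leq\mu'\mu w''$ in Bruhat order for every $\mu'\in\La\cap C$. Enlarge $\mu$ by some sufficiently regular $\mu'\in\La\cap C$ so that both alcoves $\mu'\mu w'(A_0)$ and $\mu'\mu w''(A_0)$ lie inside the open cone $C$, and set $\nu:=\mu'\mu$. Then \rl{border}(b) furnishes a Bruhat chain $\nu w'=y_0<y_1<\cdots<y_n=\nu w''$ with every $y_i\in C$ and $l(y_{i+1})-l(y_i)=1$, so each covering step has the form $y_{i+1}=s_{\wt{\beta}_{i+1}}y_i$ for a unique reflection. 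The reflection hyperplane separates the alcoves $y_i(A_0),y_{i+1}(A_0)\subset C$, hence crosses the open cone $C$; choosing the sign of $\wt{\beta}_{i+1}=(\beta_{i+1},n_{i+1})$ so that $\beta_{i+1}\in\Phi_C$, the existence of a solution of $\beta_{i+1}(x)=-n_{i+1}$ inside $C$ forces $n_{i+1}<0$, hence $\wt{\beta}_{i+1}<0$. The Bruhat criterion then gives $y_{i+1}^{-1}(\wt{\beta}_{i+1})>0$, which is precisely the defining condition for $y_i<_{\wt{\beta}_{i+1}}y_{i+1}$. Concatenation yields $\nu w'<_C\nu w''$, and translation-invariance \re{order}(a)(i) produces the desired $w'<_C w''$.

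I expect the main technical obstacle to be the sign bookkeeping: reconciling the paper's convention that an affine root $(\al,n)$ is positive iff $n>0$, or $n=0$ and $\al$ is $C_0$-positive, with the $C$-positivity condition $\beta\in\Phi_C$ that controls whether an affine reflection preserves $C$. The elementary geometric observation that a reflection hyperplane crossing the open cone $C$ must pair a $C$-positive $\beta$ with a strictly negative $n$ is what allows the Bruhat criterion to be invoked with the correct sign in both directions.
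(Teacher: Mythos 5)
Your proof is correct and follows essentially the same approach as the paper: both directions hinge on the observation that translating by a sufficiently regular $\mu\in\La\cap C$ turns every affine root $\wt{\al}$ with linear part in $\Phi_C$ into a negative affine root, making the $<_C$-steps and Bruhat covers coincide, and both use \rl{border}(b) to produce an in-cone covering chain for the converse. The only cosmetic difference is that the paper isolates the single-step equivalence ($w'=s_{\wt{\al}}w''$, $w',w''\in C$ $\Rightarrow$ $w'<_Cw''\iff w'<w''$) as a separate claim and derives $n<0$ by the algebraic identity $(w')^{-1}(\al)=-(w'')^{-1}(\al)-2n$, whereas you derive the same sign constraint geometrically from the reflection hyperplane crossing the open cone; these are the same computation in different clothing.
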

\begin{proof}

First we claim that for every $w',w''\in \wt{W}\cap C$ and
$\wt{\al}\in\wt{\Phi}$ such that $w'=s_{\wt{\al}}w''$ we have
$w'<_C w''$ if and only if $w'< w''$.

Replacing $\wt{\al}$ by $-\wt{\al}$, if necessary, we may assume that $\wt{\al}=\al+n$
with $\al\in\Phi_C$. Then $w'<_C w''$ holds  is and only if
$w''^{-1}(\wt{\al})>0$. On the other hand, since $w',w''\in C$, we get that
$(w'')^{-1}(\al)>0$ and $(w')^{-1}(\al)>0$. Since $s_{\wt{\al}}(\wt{\al})=-\wt{\al}$, we get
$s_{\wt{\al}}(\al)=-\al-2n$, therefore $(w')^{-1}(\al)
=(w'')^{-1}s_{\wt{\al}}(\al)=-(w'')^{-1}(\al)-2n>0$. This together
with $(w'')^{-1}(\al)>0$ implies that $n<0$, thus $\wt{\al}<0$.
Therefore $w'< w''$ holds if and only if $w''^{-1}(\wt{\al})>0$.

\smallskip

Now we are ready to show our assertion. Assume that $w'\leq_C w''$
and we are going to show that for each sufficiently regular
$\mu\in\La\cap C$ we have $\mu w'\leq\mu w''$. By induction, we
can assume that $w'=s_{\wt{\al}}w''$ for some
$\wt{\al}\in\wt{\Phi}$. Choose $\mu\in\La\cap C$ sufficiently
regular so that $\mu w',\mu w''\in C$. Then $\mu w'\leq _C \mu
w''$ (by Remark~\re{order}(a)(i)), and $\mu w'=s_{\mu(\wt{\al})}\mu w''$.
Hence, by the shown above,  $\mu w'\leq \mu w''$.

Conversely, assume that for every sufficiently regular element
$\mu\in\La\cap C$ we have $\mu w'\leq \mu w''$, and we want
to show that $w'\leq_C w''$. Replacing $w'$ and $w''$ by $\mu w'$
and $\mu w''$, respectively, and using Remark~\re{order}(a)(i), we may
assume that $w',w''\in C$ and $w'\leq w''$. Then using \rl{border}(b) we may assume in addition that $w'=s_{\wt{\al}}w''$. Then, by
the shown above, $w'\leq_C w''$.
\end{proof}

\begin{Cor} \label{C:order}
Let $w',w''\in\wt{W}$, and let $C$ be a Weyl chamber.

\smallskip

(a) If $w'\leq_C w''$ and $w'\in C$, then $w'\leq w''$.

\smallskip

(b) If $w'\leq w''$ and $w''\in C$, then $w'\leq_C w''$.

\smallskip

(c) If $w',w''\in C$, then $w'\leq_C w''$ if and only if $w'\leq
w''$.
\end{Cor}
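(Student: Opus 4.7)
The plan is to derive (a) and (b) directly from \rp{order}, after which (c) is an immediate combination. The two basic ingredients are the length formula $l(\mu w) = l(\mu) + l(w)$ for $\mu \in \La \cap C$ and $w \in \wt{W} \cap C$ provided by \rl{border}(c), together with the two halves of the Bruhat cancellation property \re{border}(c): when $l(uv') = l(u) + l(v')$ one has $uv' \leq uv'' \Longrightarrow v' \leq v''$, and when $l(uv'') = l(u) + l(v'')$ one has $v' \leq v'' \Longrightarrow uv' \leq uv''$.

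For (a), I would start from $w' \leq_C w''$ together with $w' \in C$ and invoke \rp{order} to supply some sufficiently regular $\mu \in \La \cap C$ with $\mu w' \leq \mu w''$ in the ordinary Bruhat order. Since a Weyl chamber is a convex cone closed under addition of its own elements, and we have both $\mu \in C$ and $w'(A_0) \subset C$, the alcove $\mu w'(A_0) = w'(A_0) + \mu$ again lies in $C$, so $\mu w' \in \wt{W} \cap C$. Then \rl{border}(c) gives $l(\mu w') = l(\mu) + l(w')$, and the first half of \re{border}(c) lets me cancel $\mu$ on the left to conclude $w' \leq w''$.

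For (b), I would run the same chain in the opposite direction. Assume $w' \leq w''$ with $w'' \in C$, and let $\mu \in \La \cap C$ be arbitrary. Because $w'' \in \wt{W} \cap C$, \rl{border}(c) yields $l(\mu w'') = l(\mu) + l(w'')$, and the second half of \re{border}(c) promotes $w' \leq w''$ to $\mu w' \leq \mu w''$. As this holds for every $\mu \in \La \cap C$, \rp{order} delivers $w' \leq_C w''$. Part (c) is then immediate: when $w', w'' \in C$, both (a) and (b) apply and give the equivalence.

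I do not anticipate a serious obstacle here; the corollary is essentially a bookkeeping consequence of \rp{order}. The only mildly delicate point, visible in (a), is the observation that $\mu \in C$ and $w'(A_0) \subset C$ together force $\mu w'(A_0) \subset C$, which is precisely why the chamber hypothesis on $w'$ is needed for the Bruhat cancellation step to apply.
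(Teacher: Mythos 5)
Your argument is correct and is essentially the paper's own: part (a) applies \rp{order} to produce $\mu\in\La\cap C$ with $\mu w'\leq\mu w''$, then uses \rl{border}(c) and \re{border}(c) to cancel $\mu$; part (b) reverses this; part (c) combines them. One small remark: the convexity observation that $\mu w'(A_0)\subset C$ is not actually needed --- \rl{border}(c) requires only $\mu\in\La\cap C$ and $w'\in\wt{W}\cap C$, both of which you already have, so the length formula $l(\mu w')=l(\mu)+l(w')$ is available directly without checking where $\mu w'$ lands.
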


\begin{proof}
(a)  By \rp{order}, there exists $\mu\in C$ such that $\mu w'\leq
\mu w''$. Since $\mu, w'\in C$, the assertion follows from
\rl{border}(c) and Section~\re{border}(c).

\smallskip

(b) Using \rl{border}(c) and Section~\re{border}(c), we conclude that
$\mu w'\leq \mu w''$ for every $\mu\in\La\cap C$. Therefore we get
$w'\leq_C w''$ by \rp{order}.

\smallskip

(c) follows from parts~(a) and (b).
\end{proof}

\begin{Lem} \label{L:order1}
Let $\psi\in\Psi$, $w',w''\in \wt{W}^{\psi}$, $C\owns\psi$ and
$w\in\wt{W}$.

\smallskip

(a) We have $w''w\leq_{C} w'w$ if and only if $w''w\leq_{C^{\psi}}
w'w$.

\smallskip

(b) If $w\in\wt{W}_{\psi}$, then  $w''w\leq_{C} w'w$ if and only
if $w''\leq_{C^{\psi}} w'$.
\end{Lem}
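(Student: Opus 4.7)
I begin with part (a). The direction ``$\Leftarrow$'' is immediate. For ``$\Rightarrow$'', suppose $y_0=w'w,y_1,\ldots,y_n=w''w$ is a chain with $y_i=s_{\wt{\al}_i}y_{i-1}$, $\al_i\in\Phi_C$, and $y_i<_{\wt{\al}_i}y_{i-1}$. The plan is to track $\langle\psi,y_i(x)\rangle$ for a fixed $x\in A_0$. By \rl{ord}(a), $y_{i-1}(x)-y_i(x)=\wt{\al}_i(y_{i-1}(x))\check{\al}_i$ is a strictly positive multiple of $\check{\al}_i$, so pairing with $\psi$ and using $\langle\psi,\check\al\rangle\ge 0$ for every $\al\in\Phi_C$ (as $\psi\in\overline{C}$), each step's $\psi$-decrease is nonnegative, and it vanishes if and only if $\al_i\in\Phi^{\psi}$. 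On the other hand, since $w',w''\in\wt{W}^{\psi}$ have linear parts in the Weyl group of $\Phi^{\psi}$ (fixing $\psi\in V^*$) and translation parts in the coroot lattice of $\Phi^{\psi}$ (orthogonal to $\psi$), a direct check gives $\langle\psi,(w'w)(x)\rangle=\langle\psi,w(x)\rangle=\langle\psi,(w''w)(x)\rangle$. Thus the telescoping sum of the nonnegative $\psi$-decreases is zero, forcing each summand to vanish, every $\al_i$ to lie in $\Phi^{\psi}$, and hence $w''w\leq_{C^{\psi}}w'w$.

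For part (b), by (a) it suffices to prove that $w''w\leq_{C^{\psi}}w'w$ is equivalent to $w''\leq_{C^{\psi}}w'$ when $w\in\wt{W}_{\psi}$. A chain $z_0=w',\ldots,z_n=w''$ with $z_i=s_{\wt{\al}_i}z_{i-1}$ and $\al_i\in\Phi^{\psi}\cap\Phi_C$ right-multiplies by $w$ to a chain $y_i:=z_iw$ from $w'w$ to $w''w$ with the same reflections on the left. The conditions $y_i<_{\wt{\al}_i}y_{i-1}$ and $z_i<_{\wt{\al}_i}z_{i-1}$ become $w^{-1}(\wt{\beta}_i)>0$ and $\wt{\beta}_i>0$ in $\wt{\Phi}$ respectively, where $\wt{\beta}_i:=z_{i-1}^{-1}(\wt{\al}_i)\in\wt{\Phi}^{\psi}$ (since $z_{i-1}\in\wt{W}^{\psi}$ and $\wt{\al}_i\in\wt{\Phi}^{\psi}$). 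Assuming $C^{\psi}_0$ is chosen compatibly with $C_0$ (so that $\wt{\Phi}^{\psi}_{>0}=\wt{\Phi}^{\psi}\cap\wt{\Phi}_{>0}$), the defining property $w^{-1}(\wt{\Phi}^{\psi}_{>0})\subset\wt{\Phi}_{>0}$ of $\wt{W}_{\psi}$ and its contrapositive applied to $\pm\wt{\beta}_i$ yield the desired equivalence of the positivity conditions, and hence of the two chains.

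The main obstacle I anticipate is reconciling the two positivity conventions on $\wt{\Phi}^{\psi}$ in part (b): the ``$C^{\psi}_0$-positivity'' used to define $\wt{W}_{\psi}$ versus the ``$\wt{\Phi}_{>0}$-positivity'' from $C_0$ appearing in the chain conditions. For a general $C^{\psi}_0$, one reduces to the compatible case by left-multiplying via \re{order}(a)(ii) with a suitable element of $W^{\psi}$; part (a) is the clean projection argument with no such complication.
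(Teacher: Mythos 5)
Your proof is correct and follows essentially the same strategy as the paper: in (a), the paper also deduces that each reflection root $\beta_i$ must lie in $\Phi^{\psi}$ from the fact that $w'w(x)-w''w(x)$ is simultaneously a positive combination of $\check{\beta}_i$'s with $\beta_i\in\Phi_C$ and a combination of coroots of $\Phi^{\psi}$ (your pairing-with-$\psi$ computation is exactly the step the paper leaves implicit in its ``therefore''), and in (b) the paper reduces likewise to a single reflection and invokes the defining property of $\wt{W}_{\psi}$. Your flag about the choice of $C_0^{\psi}$ is a legitimate observation: the paper's Notation \ref{N:ord} formally defines $\leq_{C^{\psi}}$ via the ambient positivity $\wt{\Phi}_{>0}$, while its proof of (b) silently uses $\wt{\Phi}^{\psi}_{>0}$; under your compatibility assumption $\wt{\Phi}^{\psi}_{>0}=\wt{\Phi}^{\psi}\cap\wt{\Phi}_{>0}$ (always achievable, and the only choice for which $1\in\wt{W}_{\psi}$) the two readings coincide, so your argument is sound.
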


\begin{proof}
(a) Since $(\Phi^{\psi})_{C^{\psi}}\subseteq \Phi_C$, the ``if''
assertion is obvious. Conversely, assume that $w''w\leq_C w'w$.
Then there exists affine roots
\[
\wt{\beta}_1=(\beta_1,n_1),\ldots,\wt{\beta}_r=(\beta_r,n_r)\in\wt{\Phi}_C
\]
such that $w''w=s_{\wt{\beta}_n}\ldots s_{\wt{\beta}_1}w'w$, and
$s_{\wt{\beta}_i}\ldots s_{\wt{\beta}_1}w'w<_{\wt{\beta}_i}
s_{\wt{\beta}_{i-1}}\ldots s_{\wt{\beta}_1}w'w$ for all $i$. Then
for every $x\in A_0$, the difference $w'w(x)-w''w(x)$ is a
positive linear combination of the $\beta_i$'s (by \rl{ord}(a)).

Since $w''w\in \wt{W}^{\psi}w'w$, we conclude that
$w'w(x)-w''w(x)$ is a linear combination of roots of
$\Phi^{\psi}$. Therefore each $\beta_i$ is a root of
$\Phi^{\psi}$, thus $\beta_i\in (\Phi^{\psi})_{C^{\psi}}$. But
this implies that $w''w\leq_{C^{\psi}} w'w$.

\smallskip

(b) By part~(a), we have to show that  $w''w\leq_{C^{\psi}} w'w$ if and
only if $w''\leq_{C^{\psi}} w'$. Thus we can assume that
$w''=s_{\wt{\beta}}w'$ for some
$\wt{\beta}\in\wt{\Phi}^{\psi}$. In other words, we have to show
that $w'^{-1}(\wt{\beta})\in\wt{\Phi}^{\psi}_{>0}$ if and only if
$w^{-1}(w'^{-1}(\wt{\beta}))\in \wt{\Phi}_{>0}$. But this follows
from the assumption that $w\in\wt{W}_{\psi}$.
\end{proof}

\subsection{Admissible tuples}

\begin{Def} \label{D:adm}
(a) We say that a tuple $\ov{\mu}=\{\mu_C\}_{C\in\C{C}}\in
V^{\C{C}}$ {\em admissible} (resp. {\em quasi-admissible}), if for
every  $C\in \C{C}$ and
 $\al\in\Dt_{C}$, the difference $\mu_C-\mu_{s_{\al}(C)}$ belongs to
$\B{R}_{\geq 0}\check{\al}$ (resp. $\B{R}\check{\al}$).

\smallskip

(b) A tuple $\ov{w}=\{w_C\}_C\in\wt{W}^{\C{C}}$  is called {\em
admissible} (resp. {\em quasi-admissible}), if for every
$C\in\C{C}$ and $\al\in\Dt_{C}$, we have
$w_{s_{\al}(C)}\leq_{\al}w_C$ (resp. $w_{s_{\al}(C)}\in
\wt{W}_{\al}w_C$).
\end{Def}

\begin{Emp} \label{E:qadm}
{\bf Remarks.} (a) It follows from \rco{ord}(b) that if
$\ov{w}\in\wt{W}^{\C{C}}$  is (quasi)-admissible, then tuple
$\pi(\ov{w})\in\La^{\C{C}}\subseteq V^{\C{C}}$ is
(quasi)-admissible as well. Moreover, it follows from \rco{ord}(c) and Section~\re{order}(b) that
a tuple $\ov{\mu}\in\La^{\C{C}}$ is (quasi)-admissible as an
element of $\wt{W}^{\C{C}}$ if and only if it is such as an
element of $V^{\C{C}}$.

\smallskip

(b) The set of quasi-admissible tuples in $V$ (resp. $\La$) can be
naturally identified with $\B{R}^{\Psi}$ (resp. $\B{Z}^{\Psi}$).
Indeed, for each quasi-admissible $\ov{\mu}\in V^{\C{C}}$ and
every $\psi\in\Psi$, the element
$\mu_{\psi}:=\langle\psi,\mu_C\rangle$ does not depend on
$C\owns\psi$. To see this, we observe that for every pair of Weyl chambers $C,C'\owns\psi$ there exists
$w\in W_{\Phi^{\psi}}$ such that $C'=w(C)$. Therefore $\ov{\mu}$ defines a tuple
$\{\mu_{\psi}\}_{\psi\in\Psi}\in\B{R}^{\Psi}$.

\smallskip

Conversely, every tuple $\{\mu_{\psi}\}\in \B{R}^{\Psi}$ gives
rise to a quasi-admissible tuple $\ov{\mu}\in V^{\C{C}}$ defined
by the rule $\mu_C:=\sum_{\al_i\in\Dt_C}\mu_{\psi_i}\check{\al}_i$, where
$\psi_i\in\Psi_C$ is the fundamental weight corresponding to
$\al_i\in\Dt_C$.

\smallskip

(c) From now on we will not distinguish between a quasi-admissible
tuple $\{\mu_C\}_C$ in $V^{\C{C}}$ (resp. $\La^{\C{C}}$) and the
corresponding tuple $\{\mu_{\psi}\}_{\psi}$ in $\B{R}^{\Psi}$
(resp. $\B{Z}^{\Psi}$). In particular, for every $\psi\in\Psi$
we denote by $\ov{e}_{\psi}\in \La^{\C{C}}$ the quasi-admissible
tuple, corresponding to the standard vector $\ov{e}_{\psi}\in \B{Z}^{\Psi}$,
given by the rule $\ov{e}_{\psi}(\psi')=\dt_{\psi,\psi'}$.

\smallskip

(d) Arguing as in part~(b), for each quasi-admissible $\ov{w}\in\wt{W}^{\C{C}}$
and every $\psi\in\Psi$, the class $[w_C]\in\wt{W}^{\psi}\bs\wt{W}$, hence also element
$(w_C)_{\psi}\in\wt{W}_{\psi}$ (see Section~\re{fund}(d)) does not depend
on $C\owns\psi$. We will denote this element by $w_{\psi}=\ov{w}_{\psi}$.

\smallskip

(e) The set of quasi-admissible (resp. admissible) tuples in $V^{\C{C}}$
(resp. $\La^{\C{C}}$) is an ordered group (resp. an ordered  monoid) with
respect to the coordinatewise addition in $V$ (resp. $\La$). Moreover,
the identification of part~(b) identifies this group with $\B{R}^{\Psi}$
(resp. $\B{Z}^{\Psi}$).

\smallskip

(f) Using Remark~\re{order}(a)(i) and \rl{ord}(b), for every (quasi)-admissible tuples
$\ov{\mu}\in\La^{\C{C}}$ and $\ov{w}\in\wt{W}^{\C{C}}$, the tuple
$\ov{\mu}\cdot \ov{w}:=\{\mu_Cw_C\}_C\in\wt{W}^{\C{C}}$ is
(quasi)-admissible as well. In particular, for every $\mu\in\La$
and (quasi)-admissible tuple $\ov{w}\in\wt{W}^{\C{C}}$, the tuple
$\mu\ov{w}:=\{\mu w_C\}_C$ is (quasi)-admissible.
\end{Emp}

\begin{Emp} \label{E:exam}
{\bf Examples.} (a) Every $\mu\in C_0\subseteq V$ gives rise to an
admissible tuple $\ov{\mu}\in V^{\C{C}}$ defined by the rule
$\mu_{u(C_0)}:=u(\mu)$ for all $u\in W$.

\smallskip

(b) Consider the tuple $\ov{w}_{\on{st}}\in
W^{\C{C}}\subseteq\wt{W}^{\C{C}}$, defined by the rule
$(w_{\on{st}})_{u(C_0)}=u$. Then $\ov{w}_{\on{st}}$ is admissible.
Indeed, by definition we have to show that for every $u\in W$ and $\al\in \Phi_{u(C_0)}$
we have $s_{\al}u<_{\al}u$, that is, $u^{-1}(\al)>0$. Since $u^{-1}(\al)\in\Phi_{C_0}$, we are done.
\end{Emp}

The following characterization of admissible tuples will be
crucial for the rest of the paper.

\begin{Lem} \label{L:adm}
A tuple $\ov{w}\in\wt{W}^{\C{C}}$ (resp. $\ov{\mu}\in V^{\C{C}}$)
is admissible if and only if for all $C,C'\in\C{C}$, we have
$w_{C}\leq_{C'} w_{C'}$ (resp. $\mu_{C}\leq_{C'}\mu_{C'}$).
\end{Lem}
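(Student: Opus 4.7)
The plan is to prove both parts in parallel, handling the linear case $\ov{\mu}\in V^{\C{C}}$ first and then reducing the case $\ov{w}\in\wt{W}^{\C{C}}$ to it via the projection $\pi\colon\wt{W}\to\La\subset V$. The $(\Rightarrow)$ direction for both variants I would treat together using a minimal gallery $C=C_0,\ldots,C_k=C'$ crossing simple walls $\beta_i\in\Dt_{C_i}$, where minimality ensures each $\beta_i\in\Phi_C\cap(-\Phi_{C'})$. For $\ov{\mu}$, admissibility at each step gives $\mu_{C_i}-\mu_{C_{i+1}}\in\B{R}_{\geq 0}\check{\beta_i}$, and telescoping presents $\mu_{C'}-\mu_C$ as a nonnegative combination of positive coroots of $C'$, i.e.\ $\mu_C\leq_{C'}\mu_{C'}$. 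For $\ov{w}$, admissibility gives $w_{C_{i+1}}\leq_{\beta_i}w_{C_i}$, which in the opposite order reads $w_{C_i}\leq_{-\beta_i}w_{C_{i+1}}$; since $-\beta_i\in\Phi_{C'}$, the same chain of affine reflections witnesses $w_{C_i}\leq_{C'}w_{C_{i+1}}$, and transitivity of $\leq_{C'}$ finishes.

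For the converse in the $V$-case, I would fix adjacent chambers $C,C'=s_\alpha(C)$ with $\alpha\in\Dt_C$ and use the two hypothesized memberships $v=\mu_{C'}-\mu_C\in\on{Cone}(\check{\gamma}:\gamma\in\Phi_{C'})$ and $-v\in\on{Cone}(\check{\gamma}:\gamma\in\Phi_C)$. Expanding each positive coroot as a nonnegative combination of the simple coroots of its chamber and using $s_\alpha(\check{\beta})=\check{\beta}+k_\beta\check{\alpha}$ with $k_\beta\geq 0$ for $\beta\in\Dt_C\sm\{\alpha\}$, one rewrites both sides in the simple-coroot basis of $C$ and compares coefficients: every component of $v$ other than that of $\check{\alpha}$ is forced to vanish, and the remaining sign constraint produces $v\in\B{R}_{\leq 0}\check{\alpha}$, which is admissibility at $(C,\alpha)$.

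The converse for $\wt{W}^{\C{C}}$ is the principal step. I would first push the global condition along $\pi$ using \rco{ord}(b), deducing admissibility of $\pi(\ov{w})\in\La^{\C{C}}$ from the $V$-case just proved. By \rco{ord}(a), admissibility of $\ov{w}$ at $(C,\alpha)$ amounts to (i) $w_{C'}\in\wt{W}_\alpha w_C$ and (ii) $w_C(x)-w_{C'}(x)\in\B{R}_{\geq 0}\check{\alpha}$ for every $x\in A_0$. Condition (ii) follows by applying the $V$-case cone-intersection argument pointwise in $x$ to the chains witnessing $w_C\leq_{C'}w_{C'}$ and $w_{C'}\leq_C w_C$. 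Condition (i) then follows from (ii): validity on the open set $A_0$ forces the linear part of the affine map $w_C-w_{C'}$ to send $V$ into $\B{R}\check{\alpha}$, which pins the Weyl-group projections of $w_C,w_{C'}$ to satisfy $u_{C'}\in\{u_C,s_\alpha u_C\}$; the residual translation then sits in $\La\cap\B{R}\check{\alpha}=\B{Z}\check{\alpha}$ because $\check{\alpha}$ is primitive in the coroot lattice of $G^{\sc}$, yielding $w_{C'}w_C^{-1}\in\wt{W}_\alpha$. The main obstacle I anticipate is the coefficient comparison in the converse $V$-case: the interaction between the single wall $\alpha$ and the simple-coroot bases of the two adjacent chambers must be tracked carefully to collapse the intersection of cones onto the $\check{\alpha}$-axis.
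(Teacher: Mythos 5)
Your proof is correct. The forward direction via a minimal gallery is equivalent to the paper's argument via a reduced word in $W$ (the gallery from $C$ to $C'$ encodes a reduced decomposition of the Weyl element relating them), and both feed directly into the admissibility hypothesis. The $V$-case converse via the cone-intersection argument is the same as the paper's (the paper only sketches it). Where you diverge is the converse for $\ov{w}\in\wt{W}^{\C{C}}$. The paper runs the cone argument on the explicit chain witnessing $w_{C'}\leq_C w_C$: since $w_C(x)-w_{C'}(x)$ is simultaneously a positive combination of the $\check{\beta}_i$'s (all $C$-positive) and lies in $\B{R}_{\geq 0}\check{\al}$, every $\beta_i$ must equal $\al$, so the chain already lives in $\wt{\Phi}_{\{\al\}}$ and $w_{C'}\leq_\al w_C$ drops out immediately. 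You instead split the target into conditions (i) and (ii) of \rco{ord}(a), obtain (ii) from the same cone argument, and then recover (i) by analyzing the linear part of the affine map $x\mapsto w_C(x)-w_{C'}(x)$; this is correct but requires the additional facts that $\La\cap\B{R}\check{\al}=\B{Z}\check{\al}$ (primitivity of $\check{\al}$, which does hold since every coroot is $W$-conjugate to a simple coroot and those form a basis of $\La$) and that the finite Weyl part is pinned down by a codimension-one fixed-space argument. So both proofs are valid, but the paper extracts membership in $\wt{W}_\al$ directly from the structure of the chain, while yours reconstructs it from the affine geometry, paying for that with the extra primitivity input.
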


\begin{proof}
We will only prove the assertion for $\ov{w}$, while the other
case is similar, but easier. Assume first that $\ov{w}$ is
admissible, and we want to show that for every two Weyl chambers
$C$ and $C'$ we have $w_{C}\leq_{C'} w_{C'}$. Using Remark~\re{order}(a)(ii), we may assume that $C'=C_0$. Let $u\in W$ be such that
$C=u(C_0)$, choose a reduced decomposition $u=s_1\ldots s_n$ of
$u$, and for each $j=1,\ldots,n$ we set $u_j:=s_1\ldots s_j$ and
$C_j:=u_j(C_0)$. It is enough to show that $w_{C_{j+1}}\leq_{C_0}
w_{C_j}$ for each $j$.

Let $\al_{j+1}\in\Dt_{C_0}$ be such that $s_{j+1}=s_{\al_{j+1}}$.
By construction, we obtain that $u_{j+1}=u_j s_{i+1}>u_j$, hence
$u_j(\al_{j+1})\in\Phi_{C_0}$. Also since $\al_{j+1}\in\Dt_{C_0}$, we get
that $u_j(\al_{j+1})\in\Dt_{C_j}$.  Since
$C_{j+1}=s_{u_j(\al_{j+1})}(C_j)$, the admissibility assumption implies that
$w_{C_{j+1}}\leq_{u_j(\al_{j+1})}w_{C_j}$, thus
$w_{C_{j+1}}\leq_{C_0} w_{C_j}$, because
 $u_j(\al_{j+1})\in\Phi_{C_0}$.

\smallskip

Conversely, assume that $w_{C}\leq_{C'} w_{C'}$ for all
$C,C'\in\C{C}$. Choose $C\in\C{C}$, $\al\in\Dt_C$, and set
$C'=s_{\al}(C)$. Since $w_{C'}\leq_C w_C$, there exist a tuple
of affine roots
$\wt{\beta}_1=(\beta_i,n_i),\ldots,\wt{\beta}_r=(\beta_r,n_r)\in
\Phi_{C}$ such that $w_{C'}=s_{\wt{\beta}_r}\ldots
s_{\wt{\beta}_1} w_C$, and $s_{\wt{\beta}_i}\ldots
s_{\wt{\beta}_1}w_C<_{\wt{\beta}_i} s_{\wt{\beta}_{i-1}}\ldots
s_{\wt{\beta}_1}w_C$ for all $i$.

Therefore for each $x\in A_0$, the difference $w_C(x)-w_{C'}(x)$ is
a positive linear combination of the $\beta_i$'s (by \rl{ord}(a)), hence a non-positive linear combination $C$-positive roots.
On the other hand, since $w_C\leq_{C'}w_{C'}$, the difference
$w_C(x)-w_{C'}(x)$ is also a non-negative linear combination
$C'$-negative roots.

Combining these two statements, we conclude that
$w_C(x)-w_{C'}(x)$ has to be a positive multiple of $\al$. Hence
all the $\beta_i$'s have to be $\al$, thus $w_{C'}\leq_{\al} w_C$.
\end{proof}

\begin{Not} \label{N:kreg}
(a) Let $m\in\B{R}$ and $C\in\C{C}$. We say that $\mu\in V$ is
{\em $(C,m)$-regular}, if  $\langle \al,\mu\rangle\geq m$
 for every $\al\in\Phi_{C}$. We say that $w\in\wt{W}$ is
{\em $(C,m)$-regular}, if $\pi(w)=w(0)\in\La$ is  $(C,m)$-regular.

\smallskip

(b) Let $m\in\B{R}$. We say that a tuple $\ov{\mu}\in V^{\C{C}}$
is {\em $m$-regular}, if $\mu_C$ is $(C,m)$-regular for every
$C\in\C{C}$. We say that a tuple $\ov{\mu}\in V^{\C{C}}$ is {\em
regular}, if it is $m$-regular for some $m>0$. A tuple
$\ov{w}\in\wt{W}^{\C{C}}$ is called {\em $m$-regular} (resp. {\em
regular}), if $\pi(\ov{w})\in \La^{\C{C}}\subseteq V^{\C{C}}$ is
$m$-regular (resp. regular).

\smallskip

(c)  For $\ov{\mu},\ov{\mu}'\in V^{\C{C}}$ (resp.
$\ov{w},\ov{w}'\in \wt{W}^{\C{C}}$), we will say that
$\ov{\mu}\leq\ov{\mu}'$ (resp. $\ov{w}\leq\ov{w}'$) if
$\mu_C\leq_C\mu'_C$ (resp. $w_C\leq_C w'_C$) for all $C\in\C{C}$.
Notice that $\ov{\mu}\leq\ov{\mu}'$ if and only if
$\mu_{\psi}\leq\mu'_{\psi}$ for all $\psi\in\Psi$.

\smallskip

(d) For $\ov{\mu}\in V^{\C{C}}$, we define by $V^{\leq\ov{\mu}}$
the set of all $x\in V$ such that $x\leq_C\mu_C$ for all
$C\in\C{C}$.

\smallskip

(e) For every $\ov{w}\in\wt{W}^{\C{C}}$ and every $\psi\in\Psi$ we
define $\ov{w}^{\psi}\in (\wt{W}^{\psi})^{\C{C}^{\psi}}$ by the
rule $(w^{\psi})_{C^{\psi}}=(w_C)^{\psi}$ for each $C\owns\psi$
(see Section~\re{fund}).
\end{Not}

\begin{Lem} \label{L:ord1}
(a) If $\ov{w}\in\wt{W}^{\C{C}}$ (resp. $\ov{\mu}\in V^{\C{C}}$)
is quasi-admissible and regular, then it is admissible.

\smallskip

(b) If $\ov{\mu}\in V^{\C{C}}$ is quasi-admissible and regular, then
for every $\mu\in\Psi$ we have $\mu_{\psi}>0$ (see Section~\re{qadm}(b)).

\smallskip

(c) If the tuple $\ov{w}\in\wt{W}^{\C{C}}$ is admissible, then the
tuple  $\ov{w}^{\psi}$ is admissible as well.

\smallskip

(d) If $\ov{w}\in\wt{W}^{\C{C}}$ is $(m+1)$-regular, then
$\ov{w}^{\psi}$ is $m$-regular.
\end{Lem}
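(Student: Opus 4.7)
My plan is to handle the three parts in order, reducing (a) to the $V$-valued analogue followed by a case analysis on $\tau\in\wt{W}_{\al}$, (b) to the equivalence of orderings in \rl{order1}(b) combined with \rl{adm}, and (c) to a two-sided bound on $\pi(w_\psi)$ coming from the defining property of $\wt{W}_\psi$.

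For (a) I would first prove the $V$-case. Quasi-admissibility writes $\mu_C-\mu_{s_\al(C)}=c\check{\al}$ for some $c\in\B{R}$; pairing with $\al$ and using $(C,m)$- and $(s_\al(C),m)$-regularity (noting $-\al\in\Phi_{s_\al(C)}$) gives $2c=\al(\mu_C-\mu_{s_\al(C)})\geq 2m>0$, so $c>0$. For the $\wt{W}$-case, applying the $V$-case to $\pi(\ov{w})$ yields $\pi(w_{s_\al(C)})-\pi(w_C)=-c\check{\al}$ with $c\geq m\geq 1$ an integer (using $\La\cap\B{R}\check{\al}=\B{Z}\check{\al}$). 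Quasi-admissibility lets me write $w_{s_\al(C)}=\tau w_C$ with $\tau\in\wt{W}_\al=\B{Z}\check{\al}\rtimes\lan s_\al\ran$, so $\tau$ is either a translation $k\check{\al}$ or an affine reflection $s_{\al,n}$. In the translation case, comparison at $0$ forces $k=-c\leq-1$, and iterating \rl{ord}(b) with transitivity of $<_\al$ gives $-c\check{\al}\cdot w_C<_\al w_C$. In the reflection case, the same comparison yields $n=c-\al(\pi(w_C))$, so
$$w_C^{-1}(\al,n)=(\ov{w_C}^{-1}(\al),\,c),$$
which is a positive affine root since $c\geq 1$; \rn{ord}(a) then gives $s_{\al,n}w_C<_\al w_C$. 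The potential obstacle is $c=0$, but positivity of $m$ and integrality of $c$ rule this out.

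For (b) I would use \rl{adm}: admissibility of $\ov{w}$ is equivalent to $w_{C_1}\leq_{C_2}w_{C_2}$ for all $C_1,C_2\in\C{C}$, and admissibility of $\ov{w}^\psi$ (inside $\wt{W}^\psi$) is equivalent to $(w_{C_1})^\psi\leq_{C_2^\psi}(w_{C_2})^\psi$ for all $C_1,C_2\owns\psi$. Since $\ov{w}$ is admissible it is quasi-admissible, so by \re{qadm}(c) the element $w_\psi:=(w_C)_\psi$ does not depend on $C\owns\psi$. Applying \rl{order1}(b) with $w'=(w_{C_2})^\psi$, $w''=(w_{C_1})^\psi$, $w=w_\psi$, and $C=C_2$—so that $w'w=w_{C_2}$ and $w''w=w_{C_1}$—gives precisely the desired equivalence and finishes the proof.

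For (c), fix $C\owns\psi$ and $\al\in(\Phi^\psi)_{C^\psi}\subset\Phi_C$. From $w_C=(w_C)^\psi\cdot w_\psi$ and the composition formula $\pi(xy)=\ov{x}(\pi(y))+\pi(x)$,
$$\pi((w_C)^\psi)=\pi(w_C)-\ov{(w_C)^\psi}(\pi(w_\psi)),$$
so $\al(\pi((w_C)^\psi))=\al(\pi(w_C))-\gm(\pi(w_\psi))$ with $\gm:=\ov{(w_C)^\psi}^{-1}(\al)\in\Phi^\psi$. The critical estimate—and the source of the loss of $1$—is $|\beta(\pi(w_\psi))|\leq 1$ for every $\beta\in\Phi^\psi$. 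I would obtain it directly from $w_\psi^{-1}((\wt{\Phi}^\psi)_{>0})\subset\wt{\Phi}_{>0}$: applied to $(\beta,1)\in(\wt{\Phi}^\psi)_{>0}$, the image $(\ov{w_\psi}^{-1}(\beta),\,1+\beta(\pi(w_\psi)))$ must be positive, forcing $\beta(\pi(w_\psi))\geq-1$, and the same applied to $-\beta$ gives the other inequality. Combining with $(m+1)$-regularity, $\al(\pi((w_C)^\psi))\geq(m+1)-1=m$, so $\ov{w}^\psi$ is $m$-regular. The hard point is recognizing that the drop from $m+1$ to $m$ is accounted for exactly by this two-sided bound on $\pi(w_\psi)$.
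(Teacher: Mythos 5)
Your proof is correct and takes essentially the same route as the paper. In (a) the case analysis on $\tau\in\wt{W}_\al$ (and the appeal to primitivity of $\check\al$ in $\La$) is what the paper packages into a direct application of \rco{ord}(a); part (b) is the same combination of \rl{adm} and \rl{order1}(b); and in (c) your explicit two-sided bound $|\beta(\pi(w_\psi))|\leq 1$ for $\beta\in\Phi^{\psi}$ is an unpacked version of the paper's equivalence $w^{-1}(\al)-m\in\wt{\Phi}_{>0}\Longleftrightarrow(w^{\psi})^{-1}(\al)-m\in\wt{\Phi}^{\psi}_{>0}$, both coming from $w_\psi\in\wt{W}_\psi$.
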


\begin{proof}

(a) We will only show the assertion for $\ov{w}$. Fix $C\in\C{C}$,
let $\al\in\Phi_C$, and set $C'=s_{\al}(C)$. We want to show that
$w_{C'}\leq_{\al} w_C$. Since $\ov{w}$ is
quasi-admissible, we get $w_{C'}\in\wt{W}_{\al}w_{C}$. Therefore
for every $x\in A_0$ we have $w_{C'}(x)=w_C(x)-a\check{\al}$ for
some $a\in\B{R}$. Since $\ov{w}$ is regular we conclude $\langle
\al,w_C(x)\rangle>0$ and $\langle \al,w_{C'}(x)\rangle<0$. Thus
$a>0$, hence $w_{C'}\leq_{\al} w_C$ (by \rco{ord}(a)).

\smallskip

(b) Since $\mu_{\psi}=\lan\psi,\mu_C\ran$ for every Weyl chamber $C\owns \psi$ (by definition), we have
$\lan\al,\mu_C\ran>0$ for every $\al\in\Dt_C$  (since $\ov{\mu}$ is regular), and $\psi=\sum_{\al\in\Dt_C} c_{\al}\al$
with $c_{\al}\geq 0$ for all $\al\in\Dt_C$, the assertion follows.

\smallskip

(c) follows from \rl{order1}(b) and \rl{adm}.

\smallskip

(d) Let $C\owns \psi$ be a Weyl chamber. We have to show that if
$w$ is $(C,m+1)$-regular, and $\psi\in C$, then $w^{\psi}$ is
$(C^{\psi},m)$-regular.

Notice that if $w$ is $(C,m+1)$-regular, then for all
$\al\in\Phi_C$ and $x\in A_0$ we have $\langle
w^{-1}(\al),x\rangle=\langle\al,w(x)\rangle>m$, or equivalently,
$w^{-1}(\al)-m\in\wt{\Phi}_{>0}$. Conversely, if
$\langle\al,w(x)\rangle>m$ for all $\al\in\Phi_C$, then $\langle\al,w(0)\rangle\geq m$ for all $\al\in\Phi_C$, thus
$w$ is $(C,m)$-regular.

Thus it suffices to show that for every
$\al\in(\Phi^{\psi})_{C^{\psi}}$, we have
$w^{-1}(\al)-m\in\wt{\Phi}_{>0}$ if and only if
$(w^{\psi})^{-1}(\al)-m\in\wt{\Phi}^{\psi}_{>0}$. Since
$w=w^{\psi}w_{\psi}$, the assertion follows from the fact that
$w_{\psi}\in\wt{W}_{\psi}$.
\end{proof}

The following lemma will be used in \rl{trunc}.

\begin{Lem} \label{L:k-reg}
Let $\ov{\mu}\in V^{\C{C}}$ be regular, and $\psi\in\Psi$. Then for every $x\in V^{\leq\ov{\mu}}$ and $\al\in\Phi$ such that
$\langle\al,\check{\psi}\rangle>0$ and $\langle \psi,x\rangle=\mu_{\psi}$,
we have $\langle\al,x\rangle>0$.
\end{Lem}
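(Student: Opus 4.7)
The plan is to reduce the assertion to a convex combination argument. The core claim is that $x$ lies in the convex hull of $\{\mu_C : C \owns \psi\}$, from which the conclusion follows immediately: for any Weyl chamber $C \owns \psi$, the hypothesis $\langle \alpha, \check{\psi} \rangle > 0$ combined with $\check{\psi} \in \overline{C}$ forces $\alpha \in \Phi_C$, so by regularity of $\ov{\mu}$ there is a fixed $m > 0$ with $\langle \alpha, \mu_C \rangle \geq m$; a convex combination $x = \sum_{C \owns \psi} \lambda_C \mu_C$ then satisfies $\langle \alpha, x \rangle \geq m > 0$.

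For the constraint analysis, fix any $C \owns \psi$ and write $\mu_C - x = \sum_{\beta \in \Phi_C} c_\beta \check{\beta}$ with $c_\beta \geq 0$. Pairing with $\psi$ gives $0 = \langle \psi, \mu_C - x \rangle = \sum c_\beta \langle \psi, \check{\beta} \rangle$; every term is non-negative, because $\check{\beta}$ for $\beta \in \Phi_C$ is a non-negative combination of the simple coroots of $C$, and $\psi$ pairs with simple coroots via Kronecker $\delta$. The equality $\langle \psi, \check{\beta} \rangle = 0$ holds exactly when $\beta \in \Phi^\psi$. Thus $c_\beta = 0$ for $\beta \notin \Phi^\psi$, so $\mu_C - x$ lies in the sub-cone of $V^\psi$ generated by $\check{\beta}$ for $\beta \in (\Phi^\psi)_{C^\psi}$.

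The heart of the argument is to establish that $x \in \operatorname{conv}\{\mu_C : C \owns \psi\}$. For this I would prove that $V^{\leq \ov{\mu}}$ equals $\operatorname{conv}\{\mu_C\}_{C \in \C{C}}$: this polytope is bounded by regularity (running $\psi$ over all fundamental weights of all chambers produces both upper and lower bounds in every direction), each $\mu_C$ lies in $V^{\leq \ov{\mu}}$ by \rl{adm}, and each $\mu_C$ is a vertex because the $\dim V$ constraints $\langle \psi', \cdot\rangle \leq \mu_{\psi'}$ with $\psi' \in \Psi_C$ are simultaneously tight at $\mu_C$ and determine it uniquely. Restricting to the face $F := V^{\leq \ov{\mu}} \cap \{\langle \psi, \cdot \rangle = \mu_\psi\}$, its vertices are precisely those $\mu_C$ satisfying $\langle \psi, \mu_C \rangle = \mu_\psi$, i.e., those with $C \owns \psi$; for $C \not\owns \psi$, an iterated-reflection argument combined with $m$-regularity yields $\langle \psi, \mu_C \rangle < \mu_\psi$ strictly.

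The main technical obstacle is the identification of the vertices of $V^{\leq \ov{\mu}}$ exactly with the $\mu_C$'s; one must rule out spurious vertices arising from simultaneously tight constraints $\{\langle \psi_i, v\rangle = \mu_{\psi_i}\}$ with $\{\psi_i\}$ linearly independent but not of the form $\Psi_C$ for any single chamber. This is essentially a statement about generalized permutohedra and can be verified either by a direct combinatorial argument invoking admissibility to prevent such incompatible tightness, or by induction on the rank of $\Phi$ reducing to the sub-root system $\Phi^\psi$ (noting that $\{\mu_C - \mu_{\psi}\check{\psi}/\langle\psi,\check{\psi}\rangle\}_{C \owns \psi}$ is an admissible and regular tuple for $\Phi^\psi$, by the analog of \rl{ord1} for $V^{\C{C}}$).
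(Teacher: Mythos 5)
Your proposal follows essentially the same approach as the paper: both reduce to the fact that the face $V^{\leq\ov{\mu}}\cap\{x:\langle\psi,x\rangle=\mu_\psi\}$ equals the convex hull of $\{\mu_C\}_{C\owns\psi}$, and then conclude by noting that $\langle\al,\check{\psi}\rangle>0$ together with $\check{\psi}\in\ov{C}$ forces $\al\in\Phi_C$, so regularity gives $\langle\al,\mu_C\rangle>0$. The one difference is that where you attempt to reprove the convex-hull description from scratch and flag the vertex identification as the main technical obstacle, the paper simply cites it as a known property of admissible tuples (Behrend, or ``argue as in Lemma \ref{L:adm}''), so the gap you acknowledge is precisely the content the paper outsources to that reference.
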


\begin{proof}
By \rl{ord1}(a), the tuple $\ov{\mu}$ is admissible. Thus it is
well-known (see, for example \cite{Be} or argue as in
\rl{adm}), that the intersection of $V^{\leq\ov{\mu}}$ with the
set of $x\in V$ such that $\langle \psi,x\rangle=\mu_{\psi}$ is
equal to the convex hull of $\{\mu_C\}_{C\owns\psi}$.

Therefore it is enough to show that for every
 Weyl chamber $C\owns\psi$, we have $\langle\al,\mu_C\rangle>0$. Since
tuple $\ov{\mu}$ is regular, it is enough to show that $\langle\al,y\rangle>0$ for some $y\in C\subseteq V^*$.
But this follows from  our assumption $\langle\al,\check{\psi}\rangle>0$ together with observation
that $\check{\psi}\in\ov{C}$ (see Section~\re{fund}(b)).
\end{proof}

The following very important technical result will be used in \rp{ind}.

\begin{Lem} \label{L:order2}
 Let $\ov{u}\in\wt{W}^{\C{C}}$ be admissible, and $\psi\in\Psi$.
Then there exists $m\in\B{N}$ such that for every $m$-regular
admissible tuple $\ov{w}\in\wt{W}^{\C{C}}$ and every $\mu\in\La$
such that $\mu u_C\leq_{C^{\psi}} w_C$ for each $C\owns \psi$, we
have $\mu u_C\leq_C w_C$ for each $C$.
\end{Lem}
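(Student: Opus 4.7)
The plan is to reduce to checking nonnegativity of the $\Psi$-coordinates of the admissible difference tuple $\ov{\delta} := \pi(\ov{w}) - \pi(\mu\ov{u}) \in V^{\C{C}}$, and then to lift back to a statement in $\wt{W}$. The case $C\owns\psi$ is immediate: since $(\Phi^{\psi})_{C^{\psi}} = \Phi_C \cap \Phi^{\psi} \subset \Phi_C$, the relation $\leq_{C^{\psi}}$ refines $\leq_C$, so the hypothesis $\mu u_C\leq_{C^{\psi}}w_C$ directly gives $\mu u_C\leq_C w_C$.

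For $C\not\owns\psi$, I would first show that for $m$ large enough (depending only on $\ov{u}$ and the root system) the tuple $\ov{\delta}$ is itself admissible in $V^{\C{C}}$. This uses admissibility of $\mu\ov{u}$ (see \re{qadm}(e)) and of $\ov{w}$ to write, for adjacent chambers $C,s_{\al}(C)$, $\delta_C-\delta_{s_{\al}(C)}=(q-p)\check{\al}$, where $p$ coming from $\ov{u}$ is bounded by a constant $K(\ov{u})$, while $q$ coming from $\ov{w}$ satisfies $q\geq m-O(1)$ thanks to the $m$-regularity of $\ov{w}$ (specifically $\langle\al,\pi(w_C)\rangle\geq m$ and $\langle-\al,\pi(w_{s_{\al}(C)})\rangle\geq m$ since $\al\in\Phi_C$ and $-\al\in\Phi_{s_{\al}(C)}$). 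Once $\ov{\delta}$ is admissible, by \re{qadm}(d) and \rn{kreg}(c) the desired $V$-level statement $\pi(\mu u_C)\leq_C\pi(w_C)$ for every $C$ reduces to the check that $\delta_{\psi'}\geq 0$ for every $\psi'\in\Psi$.

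To verify the $\Psi$-coordinate positivity, note that the hypothesis yields $\delta_C\in\sum_{\beta\in\Phi^{\psi}\cap\Phi_C}\B{R}_{\geq 0}\check{\beta}$ for every $C\owns\psi$; pairing with any $\psi'\in\Psi_C$ and using that $\langle\psi',\check{\beta}\rangle\geq 0$ for a $C$-dominant weight $\psi'$ and a $C$-positive root $\beta$ yields $\delta_{\psi'}\geq 0$ for every $\psi'$ in $\bigcup_{C\owns\psi}\Psi_C$. For $\psi'\in\Psi$ lying outside this union, one propagates using admissibility of $\ov{\delta}$: crossing a wall changes the relevant coordinate by an amount that the regularity of $\ov{w}$ dominates, using that gallery distances between Weyl chambers are bounded in terms of $W$ alone. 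Alternatively, one may observe that the restricted family $\{\delta_C\}_{C\owns\psi}$ lies in $\ker\psi$ and is symmetric under the Weyl group of $\Phi^{\psi}$, so $0$ lies in its convex hull, forcing $\ov{\delta}\geq 0$ globally.

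Finally, to lift the $V$-level inequality $\pi(\mu u_C)\leq_C\pi(w_C)$ to the desired $\wt{W}$-level inequality $\mu u_C\leq_C w_C$, I would invoke \rp{order}: the latter is equivalent to $\nu\mu u_C\leq\nu w_C$ in Bruhat order for any sufficiently regular $\nu\in\La\cap C$. Choosing $\nu$ regular enough that both $\nu\mu u_C$ and $\nu w_C$ lie in $C$, \rco{order}(c) and \rl{border}(b),(c) reduce the Bruhat comparison to the $V$-level inequality established above. The main obstacle is the $\Psi$-coordinate propagation in the third paragraph — ensuring nonnegativity of $\delta_{\psi'}$ for $\psi'$ not directly reachable from chambers containing $\psi$; this is the most delicate point, and it is precisely where the constant $m$ must be chosen in terms of $\ov{u}$, $\psi$, and the root system.
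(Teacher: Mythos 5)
Your plan is structurally similar in spirit — you try to reduce to a positivity statement about a ``difference tuple'' — but both of the delicate steps you flag contain genuine gaps, and the argument as written does not close. The paper's actual proof stays entirely at the level of $\wt{W}$ (it never projects down to $V$) and closes the delicate step by a specific combinatorial choice of chamber that your plan never identifies.

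The more serious gap is in your third paragraph, the propagation of $\delta_{\psi'}\geq 0$ to $\psi'$ outside $\bigcup_{C\owns\psi}\Psi_C$. Neither of your two sub-routes works. For the first (``regularity dominates''): the hypothesis $\mu u_C\leq_{C^{\psi}}w_C$ puts $\delta_C$ (for $C\owns\psi$) in the cone $\sum_{\beta\in(\Phi^{\psi})_{C^{\psi}}}\B{R}_{\geq 0}\check{\beta}$, but the coefficients of that cone combination are \emph{unbounded} (they can be of the same order as the regularity parameter $m$, since $\mu\in\La$ is completely arbitrary and the hypothesis ties $\mu u_C$ only to $w_C$ modulo $\wt{W}^{\psi}$). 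Pairing with $\psi'\notin\Psi_C$ gives a term that can be as negative as $-O(m)$; the propagation-through-admissibility correction is $+O(m)$; the net sign is not under control by your argument, so the ``regularity dominates'' claim is not established and is in fact false as a general principle. For the second sub-route, nothing in the hypotheses makes $\{\delta_C\}_{C\owns\psi}$ symmetric under $W_{\Phi^{\psi}}$, so the convex-hull argument has no basis. The mechanism the paper actually uses is this: given a target chamber $C_0$ (so $\psi'\in\Psi_{C_0}$), choose $u\in W$ of \emph{minimal length} with $\psi\in u(C_0)=:C$; then one has the containment $(\Phi^{\psi})_{C^{\psi}}\subset\Phi_{C_0}$, which forces the cone containing $\delta_C$ to pair $\geq 0$ with $\psi'$ \emph{already at the chamber $C$}, with no need for a sign-uncertain correction. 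Without that specific choice of $C$, the argument does not close.

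There is a second, independent gap in your last paragraph: the passage from the $V$-level inequality $\pi(\mu u_C)\leq_C\pi(w_C)$ back to the desired $\wt{W}$-level inequality $\mu u_C\leq_C w_C$ is not a consequence of \rp{order} or \rco{order}(c) or \rl{border}(b),(c). \rco{ord}(b) shows only that $\leq_{\Phi'}$ on $\wt{W}$ implies $\leq_{\Phi'}$ on $\pi$-images in $V$; \rco{ord}(c) gives the converse only for elements of $\La\subset\wt{W}$, not for general elements, and \rp{order}/\rl{border} reduce $\leq_C$ to Bruhat order, which is still a $\wt{W}$-level relation carrying strictly more information than the $V$-projection. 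The paper sidesteps this entirely by never dropping to $V$: it constructs a bounded \emph{abelian} admissible shift tuple $\ov{\mu}\in\La^{\C{C}}$ with $\mu_C^{-1}u_C\leq_C\mu_{C'}^{-1}u_{C'}$, shows $\ov{\mu}^{-1}\cdot\ov{w}$ is still admissible for $\ov{w}$ sufficiently regular, and then chains three $\leq_{C_0}$ inequalities in $\wt{W}$, the middle of which ($\mu u_C\leq_{C_0}w_C$) is exactly the place where $(\Phi^{\psi})_{C^{\psi}}\subset\Phi_{C_0}$ enters.
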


\begin{proof}
First we claim that there exists an admissible tuple
$\ov{\mu}\in\La^{\C{C}}$ such that $\mu_C^{-1}u_C\leq_C
\mu^{-1}_{C'}u_{C'}$ for every $C,C'\in \C{C}$. Indeed, $\ov{u}$
is admissible, hence for each $C\in\C{C}$, $\al\in\Dt_C$ and $x\in
A_0$ we have
$u_C(x)-u_{s_{\al}(C)}(x)=m_{C,\al,x}\check{\al}$ for some constant $m_{C,\al,x}\geq 0$
(use \rl{ord}(a)). Let $m'$ be the supremum of the $m_{C,\al,x}$'s, choose
$\mu\in C_0\cap\La$ such that $\langle\al,\mu\rangle\geq m'$ for
all $\al\in\Dt_{C_0}$, and let $\ov{\mu}\in \La^{\C{C}}$ be the
tuple, corresponding to $\mu$ as in Section~\re{exam}(b).

We claim that $\mu_C^{-1}u_C\leq_C \mu^{-1}_{C'}u_{C'}$ for every
$C,C'\in \C{C}$. Indeed, arguing as in \rl{adm} word-by-word, it
is enough to check that $\mu_C^{-1}u_C\leq_{\al}
\mu^{-1}_{C'}u_{C'}$ for all $C\in\C{C},\al\in\Dt_C$ and
$C'=s_{\al}(C)$. Then by \rco{ord}(a), it is enough to check that
$\mu_C^{-1}u_C(x)\leq_{\al} \mu^{-1}_{C'}u_{C'}(x)$ for each $x\in
A_0$. By construction, we have
$\mu^{-1}_{C'}u_{C'}(x)-\mu_C^{-1}u_C(x)=
(\langle\al,\mu_C\rangle-m_{C,\al,x})\check{\al}$,
so the assertion follows from the fact that $m_{C,\al,x}<m'\leq
\langle\al,\mu_C\rangle$.

\smallskip

Denote $m$ be the maximum of the $\langle\al,\mu_C\rangle+1$'s, taken
over all $C\in\C{C}$ and  $\al\in\Dt_{C}$. We claim that such an $m$
satisfies the required property.

To see this, we choose any $m$-regular admissible tuple $\ov{w}$, and we
claim that tuple $\ov{\mu}^{-1}\cdot\ov{w}=\{\mu_C^{-1}w_C\}_C$ is
admissible. By Section~\re{qadm}(f), it is quasi-admissible, so by
\rl{ord1}(a), it is enough to show that it is regular. For every
$C\in\C{C}$, $\al\in\Dt_C$, we have
$\langle\al,\mu_C^{-1}\pi(w_C)\rangle=
\langle\al,\pi(w_C)\rangle-\langle\al,\mu_C\rangle>0$, because
$\langle\al,\pi(w_C)\rangle\geq m$ by $m$-regularity of $\ov{w}$,
and $\langle\al,\mu_C\rangle\leq m-1$, by construction.

Let $\mu\in\La$ be such that  $\mu
u_C\leq_{C^{\psi}} w_C$  for each $C\owns \psi$, and let $C'\in\C{C}$ be
arbitrary. We want to show that $\mu u_{C'}\leq_{C'} w_{C'}$. Using
Remark~\re{order}(a)(ii), it is enough to do it in the case $C'=C_0$, so using
Remark~\re{order}(a)(i), we have to show that
$\mu_{C_0}^{-1}\mu u_{C_0}\leq_{C_0}\mu_{C_0}^{-1}w_{C_0}$.

Choose $u\in W$ of minimal length such that $\psi_0:=u^{-1}(\psi)$
belongs to $\Psi_{C_0}$, and set $C:=u(C_0)$. Since $\ov{\mu}^{-1}\cdot\ov{w}$
is admissible, we conclude from \rl{adm} that
$\mu_{C}^{-1}w_{C}\leq_{C_0}\mu_{C_0}^{-1}w_{C_0}$, while by our
construction we get $\mu_{C_0}^{-1}u_{C_0}\leq_{C_0}
\mu_{C}^{-1}u_{C}$, hence $\mu_{C_0}^{-1}\mu u_{C_0}\leq_{C_0}
\mu_{C}^{-1}\mu u_{C}$ (by Remark~\re{order}(a)(i)).
 Thus it is enough to show that $\mu _C^{-1}\mu
u_C\leq_{C_0} \mu_C^{-1}w_{C}$, or, equivalently, that $\mu
u_C\leq_{C_0} w_{C}$.

Since $\psi_0\in C_0$, we get that $\psi\in C$. Hence, by our
assumption, $\mu u_C\leq_{C^{\psi}} w_{C}$. Therefore
to show that $\mu u_C\leq_{C_0} w_{C}$ it suffices to check that
$(\Phi^{\psi})_{C^{\psi}}\subseteq \Phi_{C_0}$.

If $\beta\in (\Phi^{\psi})_{C^{\psi}}$, then $u^{-1}(\beta)\in
(\Phi^{\psi_0})_{C_0^{\psi_0}}$. Since $u\in W$ is an element of minimal
length such that $\psi=u(\psi_0)$, we get that
$u((\Phi^{\psi_0})_{C_0^{\psi_0}})\subseteq\Phi_{C_0}$. In particular, we have
$\beta=u(u^{-1}(\beta))\in\Phi_{C_0}$.
\end{proof}

\section{Semi-infinite orbits in affine flag varieties}

\subsection{Definitions and basic properties}

\begin{Not} \label{E:groups}
(a) Let $k$ be an algebraically closed field,
$K:=k((t))$ the field of Laurent power series over $k$, and
$\C{O}=\C{O}_K=k[[t]]$ the ring of integers of $K$.
For every affine scheme $X$ over $\C{O}$ (resp. $K$) we denote
by $L^+X$ (resp. $LX$) the corresponding arc- (resp. loop-) space.

\smallskip

(b) Let $G$ be a semi-simple and simply connected group over $k$.
Fix a maximal torus $T\subseteq G$, let $\Phi=\Phi(G,T)$ be the
root system of $(G,T)$, let $W=W_{G}$ be the Weyl group of $G$,
and $\wt{W}=N_{LG}(LT)/L^+T$ the affine Weyl group of $G$.
Then in the notation of Section~\re{root} we have natural isomorphisms $\La\isom X_*(T)$
and $W_{\Phi}\isom W$. Moreover, the map
$\mu\mapsto\mu(t)$ defines an embedding $\La\hra LT$, which in
turn induces isomorphisms of groups $\La\isom LT/L^+T$ and
$\wt{W}_{\Phi}\isom\wt{W}$.
\end{Not}

\begin{Not} \label{E:notation}
(a) For every $C\in\C{C}$, let $B_C\subseteq G$ be the
Borel subgroup containing $T$ such that $\Phi(B_C,T)=\Phi_C$, and let
$U_C\subseteq B_C$ be the unipotent radical.

\smallskip

(b) Choose $C_0\in\C{C}_{\Phi}$ as in Section~\re{weyl}, let $T\subseteq B_0=B_{C_0}\subseteq G$
be the corresponding Borel subgroup, let $B_0^{-}\supseteq T$ be the opposite Borel subgroup, and let
$I\subseteq L^+G$ be the Iwahori subgroup, defined as the preimage of $B_0^{-}\subseteq G$ under
the projection $L^+G\to G$.

\smallskip

(c) For every $\al\in\Phi$ we have a natural isomorphism
$\exp_{\al}:\Lie U_{\al}\isom U_{\al}$. For
$\wt{\al}=(\al,n)\in\wt{\Phi}$, we set
$U_{\wt{\al}}:=\exp_{\al}(t^n \Lie U_{\al})\subseteq
L(U_{-\al})$, and $\wt{\al}':=(-\al,n)$.

\smallskip

(d) In the conventions of parts~(b),(c), we get the equality
$I=L^+T\cdot\prod_{\wt{\al}\in\wt{\Phi}_{>0}} U_{\wt{\al}'}$.
\end{Not}

\begin{Emp}
{\bf Affine flag varieties.}

\smallskip

 (a) Denote by $\Fl=\Fl_G$ the affine
flag variety $LG/I$ of $G$ over $k$, and by $\Gr=\Gr_G$ the affine
Grassmannian $LG/L^+G$.
We have a natural projection $\pr:\Fl\to
\Gr$. Note that both $\Fl$ and $\Gr$ are equipped with an action of the ind-group scheme $LG$, and that projection $\pr$ is $LG$-equivariant.

\smallskip

(b) The embedding $N_{LG}(LT)\hra LG$ induces embeddings $\wt{W}\hra \Fl$ and $\La\hra\Gr$  and we identify $\wt{W}$ (resp. $\La$) with its image in $\Fl$ (resp. $\Gr$). Furthermore, both $\Fl$  and $\Gr$ are equipped with the action of $T\subseteq L^+(T)\subseteq LG$,
and these identifications identify $\wt{W}$ (resp. $\La$) with the locus of $T$-fixed points $\Fl^T$ (resp. $\Gr^T$).

\smallskip

(c) Note that $\Fl$ decompose as a union $\Fl=\bigcup_{w\in\wt{W}}Iw$ of $I$-orbits, and for every $w\in\wt{W}$ we denote by $\Fl^{\leq w}\subseteq \Fl$ the closure of
the $I$-orbit $Iw\subseteq \Fl$. Then $\Fl^{\leq w}$ is a reduced projective
subscheme of $\Fl$ called {\em the affine Schubert variety}.

\smallskip

(d) Fix any $C\in\C{C}$. Then we have decompositions
$\Fl=\bigcup_{w\in\wt{W}}L(U_C)w$ and
$\Gr=\bigcup_{\mu\in\La}L(U_C)\mu$ by $L(U_C)$-orbits. For every $w\in\wt{W}$ (resp.
$\mu\in\La$) we denote by $\Fl^{\leq_{C}w}\subseteq \Fl$ (resp.
$\Gr^{\leq_{C}\mu}\subseteq \Gr$) the closure of the $L(U_C)$-orbit
$L(U_C)w\subseteq \Fl$ (resp. $L(U_C)\mu\subseteq \Gr$). We also set
$\Fl^{\leq'_{C}\mu}:=\pr^{-1}( \Gr^{\leq_{C}\mu})\subseteq \Fl$.
Notice that $\Fl^{\leq_{C}w}$, $\Gr^{\leq_{C}\mu}$ and
$\Fl^{\leq'_{C}w}$ are  closed reduced ind-subschemes.

\smallskip

(e) For every tuple $\ov{w}\in \wt{W}^{\C{C}}$ (resp. $\ov{\mu}\in
\La^{\C{C}}$), we denote by $\Fl^{\leq \ov{w}}$ (resp. $\Gr^{\leq
\ov{\mu}}$) the reduced intersection $\bigcap_C \Fl^{\leq_C w_C}$ (resp.
$\bigcap_C \Gr^{\leq_C \mu_C}$), and set $\Fl^{\leq'
\ov{\mu}}:=\pr^{-1}(\Gr^{\leq \ov{\mu}})$.
\end{Emp}

The following simple lemma will be used later.

\begin{Lem} \label{L:invar}
Let $Z\subseteq \Fl$ be a closed reduced $T$-invariant ind-subscheme, $C$
a Weyl chamber, and $w\in\wt{W}$. Then $Z\cap
L(U_C)w\neq\emptyset$ if and only if $w\in Z$.
\end{Lem}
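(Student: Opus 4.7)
The ``if'' direction is immediate since $w = 1 \cdot w \in U_C(K) w$. For the converse, I would contract any $x \in Z \cap U_C(K) w$ onto $w$ using a suitable one-parameter subgroup of $T$, viewed as acting on $\Fl$ by left multiplication. First observe that $T$ fixes the point $w \in \Fl$: let $n \in N_{G(K)}(T(K))$ represent $w$, decompose $n = \tau \cdot n_0$ with $\tau \in T(K)$ and $n_0 \in N_G(T)$; then for $t \in T$ we have $tn = n \cdot n_0^{-1} t n_0$, and $n_0^{-1} t n_0 \in T \subset I$. Hence $t \cdot w = w$, and in particular $Z \cap U_C(K)w$ is stable under $T$.

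Next, I would choose a cocharacter $\mu \in \La$ with $\langle \alpha, \mu \rangle > 0$ for every $\alpha \in \Phi_C$, which exists because $C$ is an open cone in $V$ and $\La$ is a full lattice. For $s \in \B{G}_m$, the element $\mu(s) \in T$ acts by conjugation on each root subgroup $U_\alpha(K)$ with $\alpha \in \Phi_C$ by the scaling $\exp_\alpha(f(t)) \mapsto \exp_\alpha(s^{\langle \alpha, \mu \rangle} f(t))$. Writing $U_C$ as an ordered product of these root subgroups, conjugation by $\mu(s)$ acts componentwise, and since every exponent $\langle \alpha, \mu \rangle$ is strictly positive, the morphism $\B{G}_m \times U_C(K) \to U_C(K)$, $(s, u) \mapsto \mu(s) u \mu(s)^{-1}$, extends to a morphism $\B{A}^1 \times U_C(K) \to U_C(K)$ that is identically equal to $1$ on $\{0\} \times U_C(K)$.

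Finally, given $x = u w \in Z$ with $u \in U_C(K)$, I form the morphism $g : \B{G}_m \to \Fl$ by $s \mapsto \mu(s) \cdot x = (\mu(s) u \mu(s)^{-1}) \cdot w$, using the $T$-fixedness of $w$. By $T$-invariance of $Z$, $g$ factors through $Z$; by the extension constructed above, $g$ extends to $\tilde g : \B{A}^1 \to \Fl$ with $\tilde g(0) = 1 \cdot w = w$. Since $\tilde g$ is a morphism from a scheme of finite type, it factors through some $\Fl^{\leq w_0}$; then $\tilde g^{-1}(Z \cap \Fl^{\leq w_0})$ is closed in $\B{A}^1$ and contains the dense open $\B{G}_m$, so it equals $\B{A}^1$, whence $w = \tilde g(0) \in Z$. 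The only slightly technical point is the extension of the conjugation morphism across $s = 0$, which reduces to the observation that scaling by a positive power of $s$ extends over $s = 0$ to the zero map; everything else is a direct application of torus contraction together with the closedness of $Z$.
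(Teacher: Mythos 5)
Your proof is correct and follows essentially the same route as the paper's: contract a point of $Z\cap U_C(K)w$ onto $w$ by a regular cocharacter $\mu\in\La\cap C$, using $T$-invariance to stay in $Z$, and closedness of $Z$ to conclude $w\in Z$. The paper's version is terser (it leaves the $T$-fixedness of $w$ and the reduction to a scheme of finite type implicit), but the idea and the key choice $\mu\in\La\cap C$ are identical.
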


\begin{proof}
Clearly if $w\in Z$, then $Z\cap L(U_C)w\neq\emptyset$.
Conversely, let $z$ be an element of $Z\cap L(U_C)w$, and pick
$u\in L(U_C)$ such that $z=uw$. For any $\mu\in\La=\Hom(\B{G}_m,T)$ and $a\in\B{G}_m$ we have  $\mu(a)(z)=(\mu(a)u\mu(a)^{-1})(w)$, because
$w\in\Fl$ is $T$-invariant, hence $(\mu(a)u\mu(a)^{-1})(w)\in Z$, because $Z$ is $T$-invariant.
Next, for $\mu\in\La\cap C$ the morphism $a\mapsto (\mu(a)u\mu(a)^{-1})(w):\B{G}_m\to Z\subseteq\Fl$ extends to the morphism
$\B{A}^1\to \Fl$, which sends $0$ to $w$. Since $Z\subseteq \Fl$ is closed, we conclude that $w\in Z$.
\end{proof}

\begin{Lem} \label{L:ineq}
Let $w,w'\in \wt{W}$, and let $C$ be a Weyl chamber.

\smallskip

(a) We have $w'\in \Fl^{\leq w}$ if and only if $w'\leq w$.

\smallskip

(b) If $w\in C$, then $I w\subseteq \Fl$ is contained in
$L(U_C)w\subseteq \Fl$.

\smallskip

(c) If $Iw\cap L(U_C)w'\neq\emptyset$, then $w'\leq w$.
\end{Lem}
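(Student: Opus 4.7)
I would prove the three parts in the order (a), (c), (b).

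Part (a) is the standard Bruhat decomposition of the affine Schubert variety: $\Fl^{\leq w}=\sqcup_{w'\leq w}Iw'$ as a disjoint union of Iwahori orbits. Since each $w'\in\wt{W}\subset\Fl$ is a $T$-fixed point lying in the single Iwahori orbit $Iw'$, we have $w'\in\Fl^{\leq w}$ if and only if $w'\leq w$.

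For part (c) I would use a $\B{G}_m$-contraction argument. Choose a cocharacter $\lambda:\B{G}_m\to T$ with $\langle\al,\lambda\rangle>0$ for every $\al\in\Phi_C$. Then for every $u\in U_\gamma(K)$ with $\gamma\in\Phi_C$, conjugation by $\lambda(s)$ scales each mode $t^nX_\gamma$ by $s^{\langle\gamma,\lambda\rangle}$, so $\lambda(s)u\lambda(s)^{-1}\to 1$ as $s\to 0$. Combined with the observation that $\lambda(s)$ fixes $w'\in\Fl$ (because $T\subset I$), this yields $\lambda(s)\cdot(uw')\to w'$ in $\Fl$ for every $u\in U_C(K)$. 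Now if $x\in Iw\cap U_C(K)w'$, then its $\B{G}_m$-orbit $\lambda(\B{G}_m)\cdot x$ lies inside the closed $T$-invariant subscheme $\Fl^{\leq w}\supset Iw$, so its limit $w'$ also lies in $\Fl^{\leq w}$ and by (a) we conclude $w'\leq w$.

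Part (b) is the combinatorial heart. The desired inclusion $I(k)w\subset U_C(K)w$ in $\Fl$ is equivalent to the set-theoretic containment $I\subset U_C(K)\cdot(wIw^{-1})$ inside $LG$. Using the product decomposition $I=T(\C{O})\cdot\prod_{\wt{\al}\in\wt{\Phi}_{>0}}U_{\wt{\al}'}$ from \re{notation}(d), I would verify a dichotomy for each generator $U_{\wt{\al}'}$, where $\wt{\al}=(\al,n)\in\wt{\Phi}_{>0}$ and $\wt{\al}'=(-\al,n)$: either $-\al\in\Phi_C$, giving $U_{\wt{\al}'}\subset LU_{-\al}\subset U_C(K)$ directly, or $\al\in\Phi_C$, in which case one checks $U_{\wt{\al}'}\subset wIw^{-1}$, i.e.\ that $w^{-1}U_{\wt{\al}'}w\subset I$. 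Since $LG$-conjugation by $w^{-1}$ sends $U_{\wt{\al}'}$ to the root subgroup of some other affine root $\wt{\gamma}$, the task reduces to checking that $\wt{\gamma}$ lies in the set of affine roots whose root subgroup sits in $I$; this is where one invokes $w\in C$, i.e.\ $w^{-1}(\Phi_C)\subset\wt{\Phi}_{>0}$ from \re{weyl}(d), together with $\al\in\Phi_C$ and $\wt{\al}\in\wt{\Phi}_{>0}$.

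The main obstacle I expect is precisely this last combinatorial verification in (b), which requires carefully reconciling the $\wt{W}$-action on affine roots from \re{root}(e), the standard affine positivity of \re{weyl}(b), and the positive affine roots generating the Iwahori $I$ coming from the preimage convention for $B_0^-$ in \re{notation}(b). Once these conventions are fixed, the inclusion reduces to a routine sign check, after which the reduction to a product of factors lying in $U_C(K)$ or in $wIw^{-1}$ finishes the proof.
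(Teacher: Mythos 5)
Your proposal is correct and follows essentially the same route as the paper for (a) and (c): part (a) is standard Bruhat theory, and your $\B{G}_m$-contraction argument for (c) is exactly the content of \rl{invar} (which the paper's proof invokes directly). For (b) your route differs slightly from the paper's. The paper uses the orbit parametrization
\[
Iw=\Bigl(\prod_{\wt{\al}>0,\ w^{-1}(\wt{\al})<0}U_{\wt{\al}'}\Bigr)w,
\]
so that it only has to check that each factor indexed by $\wt{\al}=(\al,n)$ with $\wt{\al}>0$ and $w^{-1}(\wt{\al})<0$ satisfies $-\al\in\Phi_C$; since $U_C(K)$ is a group, this immediately gives $Iw\subset U_C(K)w$ with no rearrangement needed. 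You instead work with the full factorization $I=T(\C{O})\cdot\prod_{\wt{\al}>0}U_{\wt{\al}'}$ and prove $I\subset U_C(K)\cdot wIw^{-1}$ via a dichotomy on each factor. The sign check you defer does go through (if $\al\in\Phi_C$ and $\wt{\al}>0$ then $w\in C$ forces $w^{-1}(\wt{\al})=w^{-1}(\al)+n>0$, so $w^{-1}U_{\wt{\al}'}w\subset I$), but note that ``each factor lies in $U_C(K)$ or in $wIw^{-1}$'' does not by itself yield $I\subset U_C(K)\cdot wIw^{-1}$; you need to group the factors, i.e., use the Iwahori factorization $I=(I\cap LU_C)\cdot T(\C{O})\cdot(I\cap LU_{-C})$ and then absorb $T(\C{O})\cdot(I\cap LU_{-C})$ into $wIw^{-1}$. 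This is standard and not a serious gap, but it is precisely the step the paper's version of (b) sidesteps by restricting from the start to the factors that actually move $w$.
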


\begin{proof}
(a) is a standard.

\smallskip

(b) Note that in the notation of Section~\re{notation}(c), for every $\wt{\al}\in\wt{\Phi}$ and $w\in\wt{W}$, we have $wU_{\wt{\al}}w^{-1}=U_{w(\wt{\al})}$  and $w(\wt{\al}')=w(\wt{\al})'$. Combining this with Section~\re{notation}(d), we see that  for every $w\in \wt{W}$ we
have
\begin{equation} \label{Eq:orbit}
Iw=\left(\prod_{\wt{\al}>0,w^{-1}(\wt{\al})<0}U_{\wt{\al}'}\right)w.
\end{equation}

Using formula \form{orbit}, it remains to check that every
$\wt{\al}=(\al,n)>0$ such that $w^{-1}(\wt{\al})<0$ satisfies
$U_{\wt{\al}'}\subseteq L(U_C)$, that is,
$-\al\in\Phi_C$. However, $n\geq 0$, because $\wt{\al}>0$. Therefore
$w^{-1}(\al)=w^{-1}(\wt{\al})-n<0$. Thus $w^{-1}(-\al)>0$, hence
 $-\al\in\Phi_C$ because $w\in C$.

\smallskip

(c) If $Iw\cap L(U_C)w'\neq\emptyset$, then  $\Fl^{\leq w}\cap
L(U_C)w'\neq\emptyset$. Since $\Fl^{\leq w}\subseteq \Fl$ is closed and
$T$-invariant, we get $w'\in \Fl^{\leq w}$ (by \rl{invar}), thus $w'\leq w$ (by part~(a)).
\end{proof}

The following proposition gives a geometric interpretation of the
ordering $\leq_C$, generalizing the well-known result (see, for example, \cite[Proposition~3.1]{MV}) for the affine Grassmannian.

\begin{Prop} \label{P:semiinf}
 For each $w',w''\in\wt{W}$ and every Weyl chamber $C\in\C{C}$,
we have $w'\leq_C w''$ if and only if  $w'\in \Fl^{\leq_C w''}$.
\end{Prop}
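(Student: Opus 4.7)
The plan is to reduce, via translation by a sufficiently regular $\mu\in\La\cap C$, to the case where both $w'$ and $w''$ already lie in $C$, and then to compare the semi-infinite orbit closure directly with the standard Schubert variety via \rl{ineq}.

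First I would invoke \rp{order}: the condition $w'\leq_C w''$ is equivalent to $\mu w'\leq \mu w''$ in the standard Bruhat order for every sufficiently regular $\mu\in\La\cap C$. Geometrically, because $T$ normalizes $U_C$, left-multiplication by $\mu\in\La$ gives $\mu\cdot U_C(K)w''=U_C(K)\cdot \mu w''$, hence $\mu\cdot\Fl^{\leq_C w''}=\Fl^{\leq_C \mu w''}$, so that $w'\in\Fl^{\leq_C w''}$ iff $\mu w'\in\Fl^{\leq_C \mu w''}$. Choosing $\mu$ deep enough in $C$ so that $\mu w',\mu w''\in\wt{W}\cap C$ (a very regular translation carries every alcove into $C$) and renaming $v:=\mu w''$, $v':=\mu w'$, the proposition reduces to the claim: for $v,v'\in\wt{W}\cap C$, one has $v'\in\Fl^{\leq_C v}$ if and only if $v'\leq v$.

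The forward direction $v'\leq v\Rightarrow v'\in\Fl^{\leq_C v}$ is immediate: \rl{ineq}(a) gives $v'\in\Fl^{\leq v}=\ov{Iv}$; since $v\in C$, \rl{ineq}(b) gives $Iv\subset U_C(K)v$; taking closures yields $\Fl^{\leq v}\subset\Fl^{\leq_C v}$, so $v'\in\Fl^{\leq_C v}$.

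The converse $v'\in\Fl^{\leq_C v}\Rightarrow v'\leq v$ is the main technical step, and the step I expect to be the real obstacle. The $T$-stability of $\Fl^{\leq_C v}=\ov{U_C(K)v}$ forces $U_C(K)v'\subset \Fl^{\leq_C v}$, and by $v'\in C$ together with \rl{ineq}(b), $Iv'\subset U_C(K)v'$; closedness then gives $\Fl^{\leq v'}\subset\Fl^{\leq_C v}$. To upgrade this containment to the inequality $v'\leq v$, I would use a Bialynicki--Birula type argument with the $\B{G}_m$-action on $\Fl$ coming from a generic cocharacter $\la\in\La\cap C$: the conjugation action of $\la(s)$ on $U_C(K)$ contracts to the identity as $s\to 0$ (since $\langle\al,\la\rangle>0$ for every $\al\in\Phi_C$), so each semi-infinite orbit $U_C(K)w$ coincides with the attracting cell of the $T$-fixed point $w$, and the closure order of these cells on elements of $C$ matches the standard Bruhat order via \rco{order}(c). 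Combining this with the already established forward direction then forces $v'\leq v$. This argument is the affine flag variety analogue of the standard Bialynicki--Birula analysis of semi-infinite orbits in the affine Grassmannian used in \cite[Prop 3.1]{MV}.
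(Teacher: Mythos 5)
Your forward direction coincides with the paper's: translate by a regular $\mu\in\La\cap C$, apply \rp{order} to get $\mu w'\leq\mu w''$ with $\mu w''\in C$, then \rl{ineq}(a) and \rl{ineq}(b) give $\mu w'\in\ov{I\mu w''}\subset\ov{U_C(K)\mu w''}$, and translating back by $\mu^{-1}$ (using that $T$ normalizes $U_C(K)$) finishes.

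The converse, however, has a genuine gap at exactly the place you flag as "the step I expect to be the real obstacle." After reducing to $v,v'\in C$ with $v'\in\Fl^{\leq_C v}$, you assert that "the closure order of these cells on elements of $C$ matches the standard Bruhat order via \rco{order}(c)." But \rco{order}(c) is a purely combinatorial comparison of the orderings $\leq_C$ and $\leq$ on $\wt{W}\cap C$; it says nothing about closures of $U_C(K)$-orbits. The statement that the attracting-cell closure order agrees with Bruhat order on $C$ is, after your reduction, precisely the content of the proposition itself, so invoking it is circular. Nor does "combining with the already established forward direction" close the loop: the forward direction gives $\{u\in C\,:\,u\leq v\}\subset\Fl^{\leq_C v}$, whereas what you need is the reverse inclusion on $T$-fixed points, which does not follow formally.

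The paper avoids all of this by not reducing to $v,v'\in C$ at all and by making the contraction concrete. If $w'\in\Fl^{\leq_C w''}$, then $w'$ lies in the closure of $U'w''$ for some closed finite-type subgroup scheme $U'\subset U_C(K)$. Translating, $\mu w'$ lies in the closure of $(\mu U'\mu^{-1})\,\mu w''$, and for $\mu\in\La\cap C$ sufficiently regular (depending on $U'$) one has $\mu U'\mu^{-1}\subset I$. Thus $\mu w'\in\ov{I\mu w''}$, so $\mu w'\leq\mu w''$ by \rl{ineq}(a), and then $w'\leq_C w''$ by \rp{order}. This is the affine-flag avatar of the $\B{G}_m$-contraction idea you have in mind, but it replaces the vague appeal to Bialynicki--Birula closure relations with the explicit step that conjugation pushes the finite-type piece $U'$ into the Iwahori, where the closure relation is the standard Bruhat order. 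If you want to keep your reduction to $C$, you still need an argument of this form to establish the cell closure relation; the BB machinery by itself does not hand you the order on closures for free.
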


\begin{proof}
Assume that  $w'\leq_C w''$. Then by \rp{order} there exists
$\mu\in\La\cap C$ such that $\mu w'\leq\mu w''$ and $\mu w''\in
C$. Then $\mu w'\in \Fl$ lies in the closure of $I\mu w''\subseteq
\Fl$ (by \rl{ineq}(a)),  thus in the closure of $L(U_C)\mu
w''\subseteq \Fl$ (by \rl{ineq}(b)). Since $U_C$ is normalized by
$T$, this implies that $w'\in \Fl$ lies in the closure of
$\mu^{-1} L(U_C)\mu w'' =L(U_C)w''\subseteq \Fl$, that is, $w'\in \Fl^{\leq_C w''}$.

\smallskip

Conversely, assume that $w'\in \Fl$ lies in the closure of $L(U_C)
w''\subseteq \Fl$. Then there exists a closed subgroup scheme
$U'\subseteq L(U_C)$ such that $w'\in \Fl$ lies in the closure of $U'
w''\subseteq \Fl$. Then $\mu w'\in \Fl$ lies in the closure of
$\mu U' w''=(\mu U'\mu^{-1})\mu w''$ for every $\mu\in\La$.
However, if $\mu\in\La\cap C$ is sufficiently regular, then $\mu
U'\mu^{-1}\subseteq I$, thus $\mu w'\in \Fl$ lies in the closure of
$I\mu w''\subseteq \Fl$. This implies that $\mu w'\leq\mu w''$ (by \rl{ineq}(a)),
thus $w'\leq_C w''$ (by  \rp{order}).
\end{proof}

\begin{Cor} \label{C:adm}
(a) A tuple $\ov{w}\in\wt{W}^{\C{C}}$ is admissible if and only if
for every $C\in\C{C}$ the intersection $L(U_C)w_C\cap
\Fl^{\leq\ov{w}}$ is non-empty.

\smallskip

(b) For a tuple $\ov{u}$ and an admissible tuple $\ov{w}$,
we have $\Fl^{\leq\ov{w}}\subseteq \Fl^{\leq\ov{u}}$ if and only if
$\ov{w}\leq\ov{u}$.

\smallskip

(c) For a tuple $\ov{w}$, we have an inclusion
$\Fl^{\leq\ov{w}}\subseteq\bigcup_C \Fl^{\leq w_C}$. In particular, each
$\Fl^{\leq\ov{w}}\subseteq \Fl$ is a closed subscheme of finite type.

\smallskip

(d) Let $Z\subseteq \Fl$ be a closed $T$-invariant ind-subscheme. For every $z\in Z$, consider tuple
$\ov{u}=\ov{u}(z)\in\wt{W}^{\C{C}}$ defined by the rule that $z\in L(U_C)u_C$ for all $z\in\C{C}$. Then the tuple $\ov{u}$ is admissible, and $u_C\in \wt{W}\cap Z$ for all $C\in\C{C}$.

\smallskip

(e) In the situation of part~(d), we have an inclusion $Z\subseteq
\bigcap_C(\bigcup_{w\in\wt{W}\cap Z}\Fl^{\leq_C w})$.

\smallskip

(f) For every tuple $\ov{\mu}\in\La^{\C{C}}$, we have an equality
$\Fl^{\leq'\ov{\mu}}=\Fl^{\leq\ov{\mu}\cdot\ov{w}_{\on{st}}}$ (compare
Sections~\re{exam}(b) and \re{qadm}(f)).
\end{Cor}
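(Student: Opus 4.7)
The plan is to read (a)--(e) as translations between the combinatorial orderings $\leq_C$ on $\wt{W}$ and the closures of semi-infinite orbits in $\Fl$, using three main inputs already established: \rp{semiinf} (the equivalence $w'\leq_C w''\iff w'\in \Fl^{\leq_C w''}$), \rl{invar} (non-emptiness of $Z\cap U_C(K)w$ is equivalent to $w\in Z$ for closed $T$-invariant $Z$), and \rl{adm} (admissibility is pairwise $\leq_{C'}$-comparison of the entries). Part (f) is the only one that is genuinely geometric and will need additional input from the Bruhat decomposition of the finite flag variety $G/B_0^-$.

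Parts (a) and (b) fall out quickly. For (a), \rl{invar} applied to $Z=\Fl^{\leq\ov{w}}$ tells us that $U_C(K)w_C\cap\Fl^{\leq\ov{w}}\neq\emptyset$ iff $w_C\in \Fl^{\leq_{C'}w_{C'}}$ for every $C'$, which by \rp{semiinf} is the condition $w_C\leq_{C'}w_{C'}$ for every $C'$; quantifying over $C$, this is admissibility by \rl{adm}. For (b), the $\Leftarrow$ direction follows directly from \rp{semiinf}; for $\Rightarrow$, (a) places $w_C\in \Fl^{\leq\ov{w}}\subset \Fl^{\leq_C u_C}$, and \rp{semiinf} gives $w_C\leq_C u_C$. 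For (d), \rl{invar} applied to $Z$ (using $z\in U_C(K)u_C\cap Z$) gives $u_C\in Z$; applied to the closed $T$-invariant $\Fl^{\leq_{C'}u_{C'}}$ (using $z\in U_{C'}(K)u_{C'}\subset \Fl^{\leq_{C'}u_{C'}}$ together with $z\in U_C(K)u_C$) it gives $u_C\in \Fl^{\leq_{C'}u_{C'}}$, hence $u_C\leq_{C'}u_{C'}$ by \rp{semiinf} and admissibility by \rl{adm}. Part (e) is then immediate from (d), since $z\in U_C(K)u_C\subset \Fl^{\leq_C u_C}$ and each $u_C\in \wt{W}\cap Z$.

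For (c), I would take $z\in\Fl^{\leq\ov{w}}$, let $v\in\wt{W}$ be the unique element with $z\in Iv$, and let $C$ be the unique Weyl chamber whose closure contains the alcove $v(A_0)$, so $v\in C$. Then \rl{ineq}(b) gives $z\in Iv\subset U_C(K)v$; since $\Fl^{\leq_C w_C}$ is closed $T$-invariant and contains $z$, \rl{invar} forces $v\in\Fl^{\leq_C w_C}$, so $v\leq_C w_C$ by \rp{semiinf}, and then $v\leq w_C$ by \rco{order}(a) thanks to $v\in C$. Therefore $z\in\Fl^{\leq v}\subset\Fl^{\leq w_C}\subset\cup_{C'}\Fl^{\leq w_{C'}}$, which is a finite union of finite-type projective schemes, so $\Fl^{\leq\ov{w}}$ is also of finite type.

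The main obstacle is (f). Rewriting both sides reduces the claim to $\cap_C\Fl^{\leq_C\mu_C u_C}=\cap_C\pr^{-1}(\Gr^{\leq_C\mu_C})$, where $u_C\in W$ is defined by $u_C(C_0)=C$. The $\subset$ inclusion is routine since $\pr$ carries $U_C(K)\mu_C u_C$ onto $U_C(K)\mu_C$. For $\supset$, I would fix $z$ in the right-hand side together with a Weyl chamber $C$; using \rp{semiinf} for $\Gr$, write $\pr(z)=u\mu'_C$ with $u\in U_C(K)$ and $\mu'_C\leq_C\mu_C$, so $z=u\mu'_C g I$ for some $g\in L^+G$. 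The key geometric input is that in the fibre $\pr^{-1}(\mu'_C)\cong G/B_0^-$, the open $B_C$-Bruhat cell $U_C\cdot u_C B_0^-/B_0^-$ is dense, because $u_C B_0^- u_C^{-1}=B_{-C}$ is opposite to $B_C$. Degenerating $\bar{g}B_0^-$ within the closure of this open cell and transporting by $\mu'_C$ exhibits $\mu'_C g I$ as a limit of points in $U_C(K)\mu'_C u_C$, hence $\mu'_C g I\in \Fl^{\leq_C\mu'_C u_C}\subset\Fl^{\leq_C\mu_C u_C}$ (the final inclusion from $\mu'_C\leq_C\mu_C$). The $U_C(K)$-invariance of $\Fl^{\leq_C\mu_C u_C}$ then gives $z\in\Fl^{\leq_C\mu_C u_C}$, and intersecting over $C$ finishes the proof.
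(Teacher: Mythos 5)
Your proofs of parts (a)--(e) are correct and follow the paper's route almost verbatim: use \rl{invar} plus \rp{semiinf} to translate membership in $\Fl^{\leq\ov{w}}$ into the comparisons $w_C\leq_{C'}w_{C'}$, and invoke \rl{adm} to recognize admissibility. The only cosmetic difference is in (d), where you reapply \rl{invar} to each $\Fl^{\leq_{C'}u_{C'}}$ directly instead of citing (a); both are equivalent.

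Part (f) is where you genuinely diverge from the paper, and it is worth comparing. The paper proves the per-chamber identity $\pr^{-1}(\Gr^{\leq_C\mu_C})=\Fl^{\leq_C\mu_C u_C}$ by reducing to $\wt{W}$-points (both sides are closed $T$-invariant unions of $U_C(K)$-orbits, so are determined by which $w\in\wt{W}$ they contain), and then checks the equivalence $\mu\leq_C\mu_C \iff \mu u\leq_C\mu_C u_C$ purely combinatorially, using \rl{ord}(b), \rco{ord}(b), \re{order}(a)(i), and the admissibility of $\ov{w}_{\on{st}}$ from \re{exam}(b). Your argument is instead geometric: you pick an arbitrary $z$, write it as $u\mu'_C g I$ with $u\in U_C(K)$, $\mu'_C\leq_C\mu_C$, $g\in L^+G$, and observe that $gI$ lies in the closure of the big Bruhat cell $U_C\cdot u_C B_0^-/B_0^-$ inside the fiber $L^+G/I\cong G/B_0^-$, which gives $gI\in\Fl^{\leq_C u_C}$, and then translate by $\mu'_C$ and absorb $u$ by $U_C(K)$-invariance. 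This is correct (the fiber is closed in $\Fl$, so closure in the fiber is contained in $\Fl^{\leq_C u_C}$; and $\mu'_C u_C\leq_C\mu_C u_C$ follows by iterating \rl{ord}(b)), but it implicitly re-derives a special case of \rp{semiinf}, so it is a slightly longer route. The paper's reduction to $\wt{W}$-points buys uniformity with the rest of the section; your version has the virtue of being self-contained and making the finite-flag-variety geometry visible. Both establish the stronger per-$C$ equality, not merely equality of the intersections. The one place you should tighten the writing is the ``degenerating'' step: say explicitly that the closure of $U_C(k)\,u_C B_0^-/B_0^-$ in $G/B_0^-$ is all of $G/B_0^-$ because $u_C B_0^- u_C^{-1}$ is opposite to $B_C$, and that this closure sits inside $\Fl^{\leq_C u_C}$ because $L^+G/I$ is closed in $\Fl$.
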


\begin{proof}
(a) By \rl{adm} and \rp{semiinf}, a tuple $\ov{w}\in\wt{W}^{\C{C}}$ is admissible if and
only if $w_{C}\in \Fl^{\leq\ov{w}}=\bigcap_{C'\in\C{C}}\Fl^{\leq_{C'} w_{C'}}$ for each $C\in\C{C}$. Since
$\Fl^{\leq\ov{w}}\subseteq \Fl$ is closed and $T$-invariant, the
assertion now follows from \rl{invar}.

\smallskip

(b) The ``if'' assertion follows from \rp{semiinf}. Conversely, if
$\Fl^{\leq\ov{w}}\subseteq \Fl^{\leq\ov{u}}$, then $w_C\in \Fl^{\leq\ov{w}}\subseteq
\Fl^{\leq\ov{u}}$ (as in part~(a)), hence $w_C\in \Fl^{\leq_C u_C}$ for
every $C$. Therefore $w_C\leq_C u_C$ by \rp{semiinf}.

\smallskip

(c) Let $z$ be any element of $\Fl^{\leq\ov{w}}$, let $u\in\wt{W}$
be such that $z\in Iu$, and let $C\in\C{C}$ be such that $u\in C$.
We want to show that $u\leq w_C$, thus $z\in \Fl^{\leq w_C}$.

By \rl{ineq}(b), we get $z\in Iu\subseteq L(U_C)u$. On the other
hand, we have $z\in \Fl^{\leq\ov{w}}\subseteq \Fl^{\leq_C w_C}$. Therefore by
\rp{semiinf}, we get that $u\leq_C w_C$, which by \rco{order}(a)
implies that $u\leq w_C$.

\smallskip

(d) By construction, $z\in \Fl^{\leq\ov{u}}\cap L(U_C)u_C$ for all
$C\in\C{C}$, hence $\ov{u}$ is admissible by part~(a). Since $z\in
L(U_C)u_C\cap Z$, we get $u_C\in Z$ by \rl{invar}.

\smallskip

(e) follows immediately from part~(d).

\smallskip

(f) It is enough to show that for every $C\in\C{C}$, the preimage
$\pr^{-1}(\Gr^{\leq_C\mu_C})$ equals $\Fl^{\leq_C (\mu_C (w_{\on{st}})_C)}$. Using
\rp{semiinf}, we have to check that for every $\mu\in\La$ and
$u\in W$ we have $\mu\leq_C \mu_C$ if and only if $\mu
u\leq_C\mu_C (w_{\on{st}})_C$.

The ``only if'' assertion follows from \rco{ord}(b). Conversely, if
$\mu\leq_C \mu_C$, then $\mu u\leq_C \mu_C u$ by \rl{ord}(b). So
by Remark~\re{order}(a)(i), it is enough to show that $u\leq_C (w_{\on{st}})_C$.
Since $\ov{w}_{\on{st}}$ is admissible and $u=(w_{\on{st}})_{u(C_0)}$, the assertion
follows from \rl{adm}.
\end{proof}

\subsection{Proof of \rt{tuple}}

\begin{Emp}
 Let $m\in\B{N}$. Recall that  $w\in\wt{W}$ is called {\em
$m$-regular}, if $\pi(w)\in\La$ is $m$-regular, that is,
$|\langle\al,\pi(w)\rangle|\geq m$ for all $\al\in\Phi$. For each
$w\in\wt{W}$, we denote by $\wt{W}^{\leq w}$ the set of $w'\in\wt{W}$
such that $w'\leq w$.
\end{Emp}
The following result is a more precise version of \rt{tuple}.

\begin{Thm} \label{T:sch}
(a) For each $w\in\wt{W}$, there exists a unique admissible tuple
$\ov{w}$ such that the Schubert variety  $\Fl^{\leq w}$ equals
$\Fl^{\leq\ov{w}}$.

Moreover, $\ov{w}=\{w_C\}_C$ is characterized by condition that
$w_C$ is a unique maximal element of $\wt{W}^{\leq w}$ with
respect to the ordering $\leq_C$.

\smallskip

(b) Furthermore, there exists $r\in\B{N}$ such that for every $m\in
\B{N}$ and every $(m+r)$-regular $w\in\wt{W}$, the tuple $\ov{w}$
 is $m$-regular.
\end{Thm}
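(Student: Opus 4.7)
My approach splits into two parts: part (a) is handled geometrically, while part (b) reduces to a combinatorial bound whose proof constitutes the main obstacle.

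For part (a), I observe that $\Fl^{\leq w}$ is a closed, irreducible, $T$-invariant subscheme of $\Fl$ whose $T$-fixed points are exactly $\wt{W}^{\leq w}$ by \rl{ineq}(a). It therefore decomposes into finitely many locally closed $U_C(K)$-orbits, one of which must be open and dense by irreducibility. Let $w_C \in \wt{W}^{\leq w}$ denote the $T$-fixed point of this dense orbit. Every $v \in \wt{W}^{\leq w}$ then lies in $\Fl^{\leq w} \subseteq \overline{U_C(K) w_C} = \Fl^{\leq_C w_C}$, so by \rp{semiinf}, $v \leq_C w_C$; this gives both existence and the claimed maximality characterization of $w_C$ (uniqueness is automatic from the strictness of $\leq_C$-reductions). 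Admissibility of $\ov{w} = \{w_C\}_C$ then follows from \rl{adm}, since for any $C, C'$ the same argument shows $w_C \in \Fl^{\leq_{C'} w_{C'}}$, hence $w_C \leq_{C'} w_{C'}$. The equality $\Fl^{\leq w} = \Fl^{\leq \ov{w}}$ holds because $\Fl^{\leq w} \subseteq \cap_C \Fl^{\leq_C w_C} = \Fl^{\leq \ov{w}}$ by construction, while $\Fl^{\leq \ov{w}} \subseteq \cup_C \Fl^{\leq w_C} \subseteq \Fl^{\leq w}$ by \rco{adm}(c) together with $w_C \leq w$. Uniqueness of the admissible tuple with this property follows from \rco{adm}(b).

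For part (b), the plan is to show that for each chamber $C$, $\pi(w_C)$ stays close to a Weyl translate of $\pi(w)$, with a bound depending only on $G$. Let $C^*$ denote the chamber containing $\pi(w)$ (well-defined whenever $w$ is at least $1$-regular), and for each $C$ pick $u_C \in W$ with $u_C(C^*) = C$. The key claim is that there exists a constant $B = B(G) \in \B{N}$, depending only on $G$, such that
\[
\langle \alpha, \pi(w_C)\rangle \;\geq\; \langle u_C^{-1}\alpha, \pi(w)\rangle - B
\]
for every regular $w$, every $C$, and every $\alpha \in \Phi_C$. Granting this bound, $u_C^{-1}\alpha \in \Phi_{C^*}$ for $\alpha \in \Phi_C$, so the $(m+r)$-regularity of $w$ gives $\langle u_C^{-1}\alpha, \pi(w)\rangle \geq m+r$; taking $r \geq B$ yields $\langle \alpha, \pi(w_C)\rangle \geq m$, the desired $(C,m)$-regularity of $\ov{w}$.

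Establishing the bound with $B$ universal is the main obstacle. The argument will combine two constraints on $\pi(w_C)$: from $w_C \leq w$ in Bruhat order, $\pi(w_C)$ is confined to a bounded perturbation of the convex hull of $W\cdot\pi(w)$; and from $w \leq_C w_C$ (by the $\leq_C$-maximality of $w_C$), \rco{ord}(b) yields $\pi(w_C) - \pi(w) \in \sum_{\alpha \in \Phi_C}\B{R}_{\geq 0}\check\alpha$. When $\pi(w)$ is deep inside $C^*$, these two conditions pin $\pi(w_C)$ to a small neighborhood of the extreme vertex $u_C(\pi(w))$ of that convex hull, and the residual slack is controlled by an alcove-walk analysis whose constants are bounded by lengths of Weyl-group elements. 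I plan to make the slack estimate precise by tracking the coordinates $\langle\psi,\pi(w_C)\rangle$ for fundamental weights $\psi$ via the admissibility encoding of \re{qadm}(b), together with the dense-orbit characterization of $w_C$ inside $\Fl^{\leq w}$ used in part (a).
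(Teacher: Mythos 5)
Your treatment of part (a) is essentially correct and follows the same route as the paper: both arguments use irreducibility of $\Fl^{\leq w}$, the geometric characterization of $\leq_C$ via \rp{semiinf}, the admissibility criterion \rl{adm}, and the uniqueness statement \rco{adm}(b). Your phrasing that $\Fl^{\leq w}$ "decomposes into finitely many locally closed $U_C(K)$-orbits" is slightly imprecise (the pieces are intersections $\Fl^{\leq w}\cap U_C(K)v$, not full $U_C(K)$-orbits), but the dense-piece argument is sound, and the resulting maximality characterization of $w_C$ and the derivation of the equality $\Fl^{\leq w}=\Fl^{\leq\ov{w}}$ match the paper.

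Part (b), however, has a genuine gap: you state the needed quantitative bound $\langle\alpha,\pi(w_C)\rangle\geq\langle u_C^{-1}\alpha,\pi(w)\rangle-B$ with a universal constant $B$, observe that it implies the theorem, and then explicitly defer its proof ("I plan to make the slack estimate precise\ldots"). This bound is exactly the content that the paper establishes in \rcl{ineq}, and it is the hard part of the argument. The two constraints you list do not by themselves yield a bound: the cone constraint $\pi(w_C)-\pi(w)\in\sum_{\alpha\in\Phi_C}\B{R}_{\geq 0}\check\alpha$ (from $w\leq_C w_C$ and \rco{ord}(b)) is a lower bound in the $\leq_C$-order, and the claimed "bounded perturbation of the convex hull of $W\cdot\pi(w)$" from $w_C\leq w$ is itself an assertion that requires proof for the affine Bruhat order --- it is not an established fact in the paper nor a standard citation-free one. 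The paper sidesteps convex-hull geometry entirely and instead proves the sandwich
\[
\mu^{-1}w_+\leq_{C_0}u^{-1}w_{u(C_0)}\leq_{C_0}w_+
\]
by purely Bruhat-theoretic manipulations: the upper bound comes from \rcl{ineq}(a) (the extremal case $w=w_0w_+$, where $w_{u(C_0)}=uw_+$) together with the maximality characterization of $w_{u(C_0)}$, and the lower bound comes from showing $u\mu^{-1}w_+\leq w$ via \re{border}(c),(e),(g) and \rl{border}(c). Until you carry out an analogue of this combinatorial step, the reduction to the claim does not constitute a proof of (b).
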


\begin{proof}
(a) Denote by $X(w)$ the closed ind-subscheme  $\bigcap_C(\bigcup_{w'\leq w}
\Fl^{\leq_C w'})\subseteq \Fl$ (compare \rco{adm}(c)), and we claim that
$X(w)$ equals $\Fl^{\leq w}$. Indeed, the inclusion $\Fl^{\leq w}\subseteq X(w)$
follows from \rco{adm}(e), while the opposite inclusion $X(w)\subseteq \Fl^{\leq w}$ follows
from identity $X(w)=\bigcup_{\ov{w}'\in (\wt{W}^{\leq w})^{\C{C}}}\Fl^{\leq
\ov{w}'}$ and \rco{adm}(c).

\smallskip

Next, since $\Fl^{\leq w}=\bigcup_{\ov{w}'\in (\wt{W}^{\leq
w})^{\C{C}}}\Fl^{\leq \ov{w}'}$ is irreducible, we see that there exists a
tuple $\ov{w}=\{w_C\}_C\in (\wt{W}^{\leq w})^{\C{C}}$ such that
$\Fl^{\leq w}=\Fl^{\leq\ov{w}}$.

\smallskip

Then for each $w'\leq w$ and $C\in\C{C}$, we have $w'\in \Fl^{\leq
w}\subseteq \Fl^{\leq_C w_C}$. Thus, by \rp{semiinf}, we have
$w'\leq_C w_C$, that is, $w_C$ is the biggest element of
$\wt{W}^{\leq w}$ with respect to ordering $\leq_C$. In
particular, for every other Weyl chamber $C'$ we have
$w_{C'}\leq_C w_C$. Thus by \rl{adm} we conclude that $\ov{w}$ is
admissible.

\smallskip

The uniqueness of $\ov{w}$ follows immediately from \rco{adm}(b).

\smallskip

(b) Choose any $\mu\in\La\cap C_0$, and let $r$ be the maximum of
the $2\langle\psi,\mu\rangle$'s, taken over $\psi\in\Psi_{C_0}$. We claim this
$r$ satisfies the required property, that is, for every $m\in\B{N}$ and
every $(m+r)$-regular $w\in\wt{W}$, the tuple $\ov{w}$ is
$m$-regular. In other words, we claim that $w_{u(C_0)}$ is
$(u(C_0),m)$-regular or, equivalently, that $u^{-1}w_{u(C_0)}$ is
$(C_0,m)$-regular for all $u\in W$.

\begin{Cl} \label{C:ineq}
Let $w\in\wt{W}$, and let $\ov{w}:=\{w_C\}_{C\in\C{C}}$
be the tuple from \rt{sch}(a).

\smallskip

(a) If $w=w_0w_+$, where $w_0\in W$ is the longest element, and
$w_+\in\wt{W}\cap C_0$, then $w_{u(C_0)}=uw_+$ for all $u\in W$.

\smallskip

(b) If $w\in Ww_+$ with $w_+\in\wt{W}\cap C_0$, then all $u\in W$
we have inequalities
\[
\mu^{-1}w_+\leq_{C_0}u^{-1}w_{u(C_0)}\leq_{C_0}w_+.
\]
\end{Cl}
\begin{proof}
(a) Fix $u\in W$. We will show that $w\leq w_0u^{-1}
w_{u(C_0)}\leq w$, which will imply that $w_0u^{-1}
w_{u(C_0)}=w_0w_+$, thus $w_{u(C_0)}=uw_+$.

Since $u\leq w_0$, we get $uw_+\leq w_0w_+=w$ (use Section~\re{border}(e)). Therefore by the characterization of $\ov{w}$,
given \rt{sch}(a) we get
$uw_+\leq_{u(C_0)} w_{u(C_0)}$. Hence
$w=w_0w_+\leq_{w_0(C_0)}w_0u^{-1} w_{u(C_0)}$ (by Remark~\re{order}(a)(ii)),
thus $w\leq w_0u^{-1} w_{u(C_0)}$ (by \rco{order}(a)). On the
other hand, $w_{u(C_0)}\leq w=w_0w_+$, thus using Section~\re{border}(e)
we conclude that $w_0u^{-1} w_{u(C_0)}\leq w$.

\smallskip

(b) By Remark~\re{order}(a)(ii), it is enough to show that
\[
u\mu^{-1}w_+\leq_{u(C_0)}w_{u(C_0)}\leq_{u(C_0)}uw_+.
\]
Consider element $w':=w_0w_+$. Then $w\leq w'$ (use Section~\re{border}(e)), thus we have $w_{u(C_0)}\leq w'$. Hence, $w_{u(C_0)}\leq_{u(C_0)} w'_{u(C_0)}$
(by the characterization of $w'_{u(C_0)}$, given in  \rt{sch}(a)), thus
$w_{u(C_0)}\leq_{u(C_0)}uw_+$ (by part~(a)). To show the second
inequality it is enough to show that $u\mu^{-1}w_+\leq w$. Since
$w$ and hence also $w_+$ is $(m+r)$-regular, our definition of
$r$ implies that $\mu^{-1}w_+\in\wt{W}\cap C_0$. Since $u\leq\mu$
(by Section~\re{border}(g)), and $l(w_+)=l(\mu)+l(\mu^{-1}w_+)$ (by
\rl{border}(c)), we conclude from Section~\re{border}(c) and part~(e) that
$u(\mu^{-1}w_+)\leq w_+\leq w$.
\end{proof}

Let us come back to the proof of the Theorem. By \rcl{ineq}(b)
and \rco{ord}(b), we have
\[
\mu^{-1}\pi(w_+)\leq_{C_0}\pi(u^{-1}w_{u(C_0)})\leq_{C_0}\pi(w_+).
\]
Hence we have $\pi(u^{-1}w_{u(C_0)})=\pi(w_+)-
\sum_{\al\in\Dt_{C_0}}m_{\al}\check{\al}$, such that $0\leq
m_{\al}\leq\langle\psi_{\al},\mu\rangle$, where
$\psi_{\al}\in\Psi_{C_0}$ is the fundamental weight corresponding
to $\al$ for each $\al\in\Dt_{C_0}$. In particular, for each
$\al\in\Dt_{C_0}$, we have
\[
\langle\al,\pi(u^{-1}w_{u(C_0)})\rangle\geq
\langle\al,\pi(w_+)\rangle-2m_{\al}\geq (m+r)-r=m,
\]
because $w_+$ is $(m+r)$-regular, and $2m_{\al}\leq
2\langle\psi_{\al},\mu\rangle\leq r$.
\end{proof}

\subsection{Technical lemmas}

\begin{Not} \label{E:psi} Fix $\psi\in\Psi$.

\smallskip
(a) Denote by
$P_{\psi}\supseteq T$ the parabolic subgroup of $G$ such that
$\Phi(P_{\psi},T)=\Phi(\psi)$ (see Section~\re{fund}(c)), by $M_{\psi}\supseteq
T$ the Levi subgroup of $P_{\psi}$, by $U_{\psi}\subseteq
P_{\psi}$ the unipotent radical, by $M^{\sc}_{\psi}$ the simply connected covering of the
derived (=commutator) group of $M_{\psi}$. Let $P_{\psi}\to M_{\psi}$ be the natural projection, and set
$P^{\sc}_{\psi}:=P_{\psi}\times_{M_{\psi}}M^{\sc}_{\psi}$.

\smallskip

(b) Note that we have a natural homomorphism $P^{\sc}_{\psi}\to P_{\psi}\subseteq G$, thus the loop group $L(P^{\sc}_{\psi})$ acts on $\Fl$. For every  $w\in \wt{W}$, we denote by $\Fl^{\leq_{\psi}w}\subseteq
\Fl$ the closure of the $L(P^{\sc}_{\psi})$-orbit
$L(P^{\sc}_{\psi})w\subseteq \Fl$.
\end{Not}

\begin{Lem} \label{L:semiinf}
(a) For $w',w''\in\wt{W}$ and $\psi\in\Psi$, we have
$w'\leq_{\psi} w''$ if and only if   $w'\in \Fl^{\leq_{\psi} w''}$.

\smallskip

(b) For $u\in\wt{W}$, $\psi\in\Psi$ and an admissible tuple
$\ov{w}\in\wt{W}^{\C{C}}$, we have $\Fl^{\leq\ov{w}}\subseteq
\Fl^{\leq_{\psi} u}$ if and only if $w_{\psi}\leq_{\psi} u$.
\end{Lem}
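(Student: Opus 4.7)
The plan is to prove (a) first, and then to deduce (b) from (a) together with the admissibility of $\ov w$.

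For the forward direction of (a), suppose $w' \leq_{\psi} w''$; we argue by induction on the length of a chain of reflections witnessing this relation. The elementary step reduces to showing that if $v' = s_{\wt{\alpha}} v$ with $\alpha \in \Phi(\psi)$ and $v^{-1}(\wt{\alpha}) > 0$, then $v'$ lies in $\ov{P_{\psi}^{\sc}(K) v}$. Since $\alpha \in \Phi(\psi) = \Phi(P_{\psi}, T)$, the affine root subgroup $U_{\wt{\alpha}'}$ from \re{notation}(c) is contained in $P_{\psi}^{\sc}(K)$, and the standard $\mathrm{SL}_{2}$-degeneration underlying the proof of \rp{semiinf} gives $v' \in \ov{U_{\wt{\alpha}'} \cdot v} \subset \ov{P_{\psi}^{\sc}(K) v}$. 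Iterating along the chain yields $w' \in \Fl^{\leq_{\psi} w''}$.

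The converse direction of (a) is the main technical obstacle. Fix a Weyl chamber $C\owns\psi$. Since $\Phi_{C} \subset \Phi(\psi)$, we have $U_{C} \subset P_{\psi}^{\sc}$, and the decomposition $P_{\psi}^{\sc} = M_{\psi}^{\sc} \cdot U_{\psi}$ satisfies $U_{\psi} \subset U_{C}$. Suppose $w' \in \ov{P_{\psi}^{\sc}(K) w''}$. The strategy is to adapt the cocharacter contraction argument from the proof of \rp{semiinf}: for a sufficiently regular $\mu \in \La \cap C$ and $N$ large, conjugation by $\mu^{N}$ contracts the effective portion of $U_{\psi}(K)$ into $I^{\sc}$, while the $M_{\psi}^{\sc}(K)$-factor is controlled by the affine Bruhat decomposition of $M_{\psi}^{\sc}(K)$ indexed by $\wt{W}^{\psi}$. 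This yields $\mu^{N} w' \in \ov{I^{\sc} \cdot v \cdot \mu^{N} w''}$ for some $v \in \wt{W}^{\psi}$, so by \rl{ineq}(a), $\mu^{N} w' \leq v \cdot \mu^{N} w''$ for such $v$; passing back through \rp{order} and applying \rl{order1} converts this into $w' \leq_{\psi} w''$.

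For (b), both directions follow from (a) combined with two observations: (i) for $C \owns \psi$, the inclusion $U_{C}(K) \subset P_{\psi}^{\sc}(K)$ yields $\Fl^{\leq_{C} w_{C}} \subset \Fl^{\leq_{\psi} w_{C}}$; (ii) since $w_{C} = (w_{C})^{\psi} \cdot w_{\psi}$ with $(w_{C})^{\psi} \in \wt{W}^{\psi} \subset M_{\psi}^{\sc}(K) \subset P_{\psi}^{\sc}(K)$ (see \re{qadm}(c)), the orbits satisfy $P_{\psi}^{\sc}(K) w_{C} = P_{\psi}^{\sc}(K) w_{\psi}$, hence $\Fl^{\leq_{\psi} w_{C}} = \Fl^{\leq_{\psi} w_{\psi}}$. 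For the direction $w_{\psi} \leq_{\psi} u \Rightarrow \Fl^{\leq \ov{w}} \subset \Fl^{\leq_{\psi} u}$, part (a) and the $P_{\psi}^{\sc}(K)$-invariance of $\Fl^{\leq_{\psi} u}$ give $\Fl^{\leq_{\psi} w_{\psi}} \subset \Fl^{\leq_{\psi} u}$, and the chain
\[
\Fl^{\leq \ov{w}} \subset \Fl^{\leq_{C} w_{C}} \subset \Fl^{\leq_{\psi} w_{C}} = \Fl^{\leq_{\psi} w_{\psi}} \subset \Fl^{\leq_{\psi} u}
\]
completes the proof. Conversely, admissibility of $\ov{w}$ (via \rco{adm}(a) and \rl{invar}) places $w_{C} \in \Fl^{\leq \ov{w}} \subset \Fl^{\leq_{\psi} u}$; by (ii) we then have $w_{\psi} \in P_{\psi}^{\sc}(K) w_{C} \subset \Fl^{\leq_{\psi} u}$, and part (a) yields $w_{\psi} \leq_{\psi} u$. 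The main obstacle will be the converse direction of (a), namely identifying the $T$-fixed points of $\ov{P_{\psi}^{\sc}(K) w''}$ with $\{v \in \wt{W} : v \leq_{\psi} w''\}$ via a cocharacter contraction argument that properly accounts for both the $M_{\psi}^{\sc}$ and $U_{\psi}$ factors of $P_{\psi}^{\sc}$.
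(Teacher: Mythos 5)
Your treatment of part (b), and of the forward direction of (a), is essentially the same as the paper's, modulo the cosmetic point that for the elementary step $v'=s_{\wt\alpha}v$ with $\alpha\in\Phi(\psi)$, the paper simply observes that $\alpha\in\Phi_C$ for some $C\owns\psi$ and quotes \rp{semiinf}, rather than redoing the root-subgroup degeneration.

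The converse of (a), which you flag yourself as the main obstacle, is where the proposal has a real gap. Your sketch proposes to contract by $\mu^N$ for regular $\mu\in\La\cap C$ and then ``pass back through \rp{order}.'' But after you obtain $\mu^N w'\leq v\,\mu^N w''$ for some $v\in\wt W^{\psi}$, you cannot directly undo the conjugation: $\mu^N$ does not commute with $v$ in $\wt W$ (we have $\mu^N v\mu^{-N}=v\cdot(v^{-1}(\mu^N)-\mu^N)$, which is not $v$ unless $v$ fixes $\mu$), so the inequality $\mu^N w'\leq v\,\mu^N w''$ is not of the shape $\mu^N w'\leq\mu^N(vw'')$ required to invoke \rp{order}. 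One would also need to control which $v$ appears as $N$ varies. The paper sidesteps all of this by fixing $C\owns\psi$ once and for all, decomposing the $P^{\sc}_{\psi}(K)$-orbit of $w''$ into the $U_C(K)$-orbits $\bigcup_{w\in\wt W^{\psi}}U_C(K)\,ww''$, and applying \rp{semiinf} to the single orbit whose closure contains $w'$: this gives $w'\leq_C ww''$, hence $w'\leq_{\psi}ww''\leq_{\psi}w''$ directly, with no contraction argument left to run. If you reorganize your converse step along those lines --- decompose first, contract second (or rather not at all, since \rp{semiinf} already did the contraction for a single $U_C(K)$-orbit) --- the proof closes.
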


\begin{proof}
(a) Assume first that $w'\leq_{\psi} w''$, and we want to prove
that $L(P^{\sc}_{\psi})w'\subseteq \Fl$ is contained in the closure of
$L(P^{\sc}_{\psi})w''\subseteq \Fl$. By definition, we can assume that
$w'=s_{\wt{\beta}}w''<_{\wt{\beta}}w''$, where $\wt{\beta}=(\beta,m)$, and
$\langle\beta,\psi\rangle\geq 0$. Then there exists a Weyl chamber
$C\owns\psi$ such that $\beta\in\Phi_{C}$. Then $w'<_C w''$, hence
by \rp{semiinf}, $w'$ lies in the closure of $L(U_C)w''\subseteq
\Fl$. Since $L(U_C)\subseteq L(P^{\sc}_{\psi})$, the assertion
follows.

\smallskip

Conversely, assume that $w'$ belongs to the closure of
$L(P^{\sc}_{\psi})w''\subseteq \Fl$. Choose any Weyl chamber
$C\owns\psi$. Then $L(P^{\sc}_{\psi})w''$ is a union of orbits
$\bigcup_{w\in\wt{W}^{\psi}}L(U_C)ww''$. Therefore $w'$ belongs to
the closure of $L(U_C)ww''\subseteq \Fl$ for some $w\in
\wt{W}^{\psi}$. Hence by \rp{semiinf}, we get $w'\leq_C ww''$,
thus $w'\leq_{\psi}ww''$. On the other hand, since $w\in
\wt{W}^{\psi}$, we also get $ww''\leq_{\psi} w''$.

\smallskip

(b) Choose any Weyl chamber $C\owns\psi$. Then
$w_C\in\wt{W}^{\psi}w_{\psi}$, hence we have $w_{\psi}\leq_{\psi}
u$ if and only if $w_C\leq_{\psi} u$.

Assume first that $w_C\leq_{\psi} u$. Then by part~(a) we have
$\Fl^{\leq_{\psi} w_C}\subseteq \Fl^{\leq_{\psi} u}$. On the other
hand, we always have inclusions $\Fl^{\leq\ov{w}}\subseteq \Fl^{\leq_C
w_C}\subseteq \Fl^{\leq_{\psi} w_C}$, which imply that
$\Fl^{\leq\ov{w}}\subseteq \Fl^{\leq_{\psi} u}$. Conversely, since
$\ov{w}$ is admissible, we get $w_C\in \Fl^{\leq \ov{w}}$ by
\rl{adm}. Therefore if $\Fl^{\leq\ov{w}}\subseteq \Fl^{\leq_{\psi}
u}$, we get $w_C\in \Fl^{\leq_{\psi} u}$. Hence, by part~(a), we have
$w_C\leq_{\psi} u$.
\end{proof}

The remaining results of this subsection will be only  used in \rs{proof}.

\begin{Lem} \label{L:decomp}
(a) For all $\ov{w}',\ov{w}''\in\wt{W}^{\C{C}}$, there exist
admissible tuples $\ov{w}_1\ldots,\ov{w}_n$ from $\wt{W}^{\C{C}} $ such
that the reduced intersection $\Fl^{\leq\ov{w}'}\cap \Fl^{\leq\ov{w}''}$ equals
$\bigcup_{t=1}^n
\Fl^{\leq\ov{w}_t}$.

\smallskip

(b) For all $\ov{w}\in\wt{W}^{\C{C}}$, $\psi\in\Psi$ and
$u\in\wt{W}$, there exist admissible tuples
$\ov{w}_1\ldots,\ov{w}_n$ from $\wt{W}^{\C{C}} $ such that reduced intersection
$\Fl^{\leq\ov{w}}\cap \Fl^{\leq_{\psi} u}$ equals $\bigcup_{t=1}^n \Fl^{\leq\ov{w}_t}$.
\end{Lem}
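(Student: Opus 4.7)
The plan is to treat both parts uniformly by establishing a single general principle: any closed $T$-invariant subscheme $Z\subset\Fl$ that is of finite type can be written as a finite union $Z=\bigcup_t\Fl^{\leq\ov{w}_t}$ with each $\ov{w}_t$ admissible. Both $\Fl^{\leq\ov{w}'}\cap\Fl^{\leq\ov{w}''}$ in (a) and $\Fl^{\leq\ov{w}}\cap\Fl^{\leq_\psi u}$ in (b) fit this framework: the $U_C(K)$-orbit closures $\Fl^{\leq_C w_C}$ and the semi-infinite orbit closure $\Fl^{\leq_\psi u}$ are $T$-invariant (since $T$ normalizes each $U_C$ and each $P^{\sc}_\psi$, and every $w\in\wt{W}\subset\Fl$ is $T$-fixed), while the ambient $\Fl^{\leq\ov{w}'}$ or $\Fl^{\leq\ov{w}}$ is of finite type by \rco{adm}(c), forcing the intersection to be of finite type as well.

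For the set-theoretic decomposition, I appeal to \rco{adm}(d): for each $z\in Z$, it produces an admissible tuple $\ov{u}(z)=\{u_C(z)\}_C$ with $z\in U_C(K)u_C(z)$ and $u_C(z)\in\wt{W}\cap Z$ for every $C\in\C{C}$. Since $z\in U_C(K)u_C(z)\subset\Fl^{\leq_C u_C(z)}$ for each $C$, it follows that $z\in\Fl^{\leq\ov{u}(z)}$, and thus $Z=\bigcup_{z\in Z}\Fl^{\leq\ov{u}(z)}$.

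The key step is promoting this to $Z=\bigcup_z\Fl^{\leq\ov{u}(z)}$ as \emph{subschemes}, that is, to verify $\Fl^{\leq\ov{u}(z)}\subset Z$ for every $z$. For (a), each $u_C(z)\in Z$ lies in both $\Fl^{\leq_C w'_C}$ and $\Fl^{\leq_C w''_C}$, so \rp{semiinf} gives $\ov{u}(z)\leq\ov{w}'$ and $\ov{u}(z)\leq\ov{w}''$, and \rco{adm}(b) delivers the containment. For (b), the same argument handles the $\Fl^{\leq\ov{w}}$ factor. For the $\Fl^{\leq_\psi u}$ factor I pick $C\owns\psi$: since $u_C(z)\in Z\subset\Fl^{\leq_\psi u}$, \rl{semiinf}(a) gives $u_C(z)\leq_\psi u$; because $u_C(z)\in\wt{W}^\psi\cdot u(z)_\psi$, this is equivalent to $u(z)_\psi\leq_\psi u$ (as observed inside the proof of \rl{semiinf}(b)), and then \rl{semiinf}(b) concludes that $\Fl^{\leq\ov{u}(z)}\subset\Fl^{\leq_\psi u}$.

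For finiteness, since $Z$ is of finite type it has only finitely many $T$-fixed points, and each component $u_C$ of any admissible $\ov{u}$ with $\Fl^{\leq\ov{u}}\subset Z$ is such a $T$-fixed point. Hence only finitely many such $\ov{u}$ occur, and selecting one representative per $z$ produces the required finite list $\ov{w}_1,\ldots,\ov{w}_n$. The only place I expect to need care is the translation between $u_C(z)\leq_\psi u$ and $u(z)_\psi\leq_\psi u$ in the $\Fl^{\leq_\psi u}$ argument for part (b), but this is precisely the $\wt{W}^\psi$--$\wt{W}_\psi$ bookkeeping already packaged in \rl{semiinf}.
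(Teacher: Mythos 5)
Your argument is correct and coincides in essence with the paper's: both use \rco{adm}(c),(d) to produce the set-theoretic decomposition $Z=\cup_z\Fl^{\leq\ov{u}(z)}$ and the finiteness, and then verify the scheme-theoretic containment $\Fl^{\leq\ov{u}(z)}\subset Z$ factor by factor, with \rl{semiinf}(b) handling the $\Fl^{\leq_\psi u}$ case. Your unpacking of why $(\ov{u}(z))_\psi\leq_\psi u$ follows from $u_C(z)\leq_\psi u$ is a correct elaboration of a step the paper leaves implicit when it simply cites \rl{semiinf}(b); the only imprecision is the opening claim of a ``general principle'' for arbitrary closed $T$-invariant finite-type $Z$, which your argument does not (and need not) establish, since the key containment uses the specific form of $Z$ in each case.
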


\begin{proof}
We denote by $Z$ the reduced intersection $\Fl^{\leq\ov{w}'}\cap
\Fl^{\leq\ov{w}''}$ in the case (a), and
$\Fl^{\leq\ov{w}}\cap \Fl^{\leq_{\psi} u}$ in the case (b). Then, by
\rco{adm}(c), in both cases, $Z$ is a closed $T$-invariant
subscheme of $\Fl$ of finite type, thus the intersection
 $\wt{W}\cap Z$ is finite.

\smallskip

By \rco{adm}(d), each $z\in Z$ defines an admissible tuple
$\ov{u}=\ov{u}(z)\in\wt{W}^{\C{C}}$ satisfying $u_C\in \wt{W}\cap Z$
for each $C\in\C{C}$. It follows that the set of tuples
$\{\ov{u}(z)\}_{z\in Z}$ is finite, so it will suffice to show the equality
\begin{equation*} \label{Eq:int}
Z=\bigcup_{z\in Z} \Fl^{\leq \ov{u}(z)}.
\end{equation*}
One inclusion follows from the fact that $z\in \Fl^{\leq \ov{u}(z)}$
for every $z\in Z$. To show the converse, it is enough to show that
if $\ov{u}\in\wt{W}^{\C{C}}$ satisfies $u_C\in Z$ for all $C\in\C{C}$,
then $\Fl^{\leq\ov{u}}\subseteq Z$. Using definition of $Z$, it remains to
show the corresponding assertion in the cases $Z=\Fl^{\leq\ov{w}}$
and $Z=\Fl^{\leq_{\psi}u}$. In the first case, we have $u_C\in
\Fl^{\leq_C w_C}$, hence  $\Fl^{\leq_C u_C}\subseteq \Fl^{\leq_C w_C}$
for all $C\in\C{C}$, thus $\Fl^{\leq\ov{u}}\subseteq \Fl^{\leq\ov{w}}
$. In the second case, the assertion follows from \rl{semiinf}(b).
\end{proof}

We will need the following ``effective'' version of \rl{decomp}.

\begin{Lem} \label{L:int}
(a) There exists $r'\in\B{N}$ such that for every $m\in\B{N}$ and every two
$(m+r')$-regular admissible tuples
$\ov{w}',\ov{w}''\in\wt{W}^{\C{C}}$, there exists $m$-regular
admissible tuples $\ov{w}_1\ldots,\ov{w}_n\in\wt{W}^{\C{C}} $ such
that $\Fl^{\leq\ov{w}'}\cap \Fl^{\leq\ov{w}''}=\bigcup_{t=1}^n
\Fl^{\leq\ov{w}_t}$.

\smallskip

(b) There exists $r'\in\B{N}$ such that for every $m,d\in\B{N}$, every $(m+2d+r')$-regular admissible tuple
$\ov{w}\in\wt{W}^{\C{C}}$ and every $u\in\wt{W}$ satisfying
$\langle\psi,\pi(u)\rangle=\pi(\ov{w})_{\psi}-d$ there exists
$m$-regular admissible tuples
$\ov{w}_1\ldots,\ov{w}_n\in\wt{W}^{\C{C}} $ such that
\[
\Fl^{\leq\ov{w}}\cap \Fl^{\leq_{\psi} u}=\bigcup_{t=1}^n \Fl^{\leq\ov{w}_t}.
\]
\end{Lem}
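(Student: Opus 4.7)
The plan is to strengthen Lemma~\ref{L:decomp} by describing $\bar{w}_1,\ldots,\bar{w}_n$ explicitly as the maximal admissible tuples satisfying the constraints that cut out the intersection. Call an admissible tuple $\bar{u}$ \emph{compatible} if, in part (a), $\bar{u}\leq\bar{w}'$ and $\bar{u}\leq\bar{w}''$ in the componentwise ordering of \rn{kreg}(c); and in part (b), $\bar{u}\leq\bar{w}$ together with $u_\psi\leq_\psi u$. By Corollary~\ref{C:adm}(b) and Lemma~\ref{L:semiinf}(b) these are exactly the admissible tuples with $\Fl^{\leq\bar{u}}$ contained in the intersection $Z$. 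The same argument as in Lemma~\ref{L:decomp}, using $\bar{u}(z)$ from Corollary~\ref{C:adm}(d) and the finiteness of $\wt{W}\cap Z$, shows that the set of maximal compatible tuples is finite and that $Z=\cup_t\Fl^{\leq\bar{w}_t}$ as $\bar{w}_t$ ranges over this set.

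The substantive content is then to choose $r'$ so that maximality forces $m$-regularity. I would argue by contradiction: suppose $(w_t)_C$ fails $(C,m)$-regularity, witnessed by $\langle\alpha,\pi((w_t)_C)\rangle<m$ for some $\alpha\in\Delta_C$ with associated fundamental weight $\psi_\alpha\in\Psi_C$. The strategy is to produce a strictly larger compatible admissible tuple by adding to $\bar{w}_t$ a suitable positive admissible shift in the $\psi_\alpha$-direction, exploiting the additive monoidal structure of admissible tuples from \re{qadm}(d),(e). In rank one the elementary shift $\bar{e}_{\psi_\alpha}$ is itself admissible and suffices; in higher rank one uses an appropriate positive integer multiple of $\bar{e}_{\psi_\alpha}$, or a primitive admissible tuple supported on the $\psi_\alpha$-ray of the monoid, chosen to land in $\Lambda^{\mathcal{C}}$ and respect admissibility across all chambers. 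The regularity gap $\geq r'$ between $\bar{w}_t$ (failing $m$-regularity) and $\bar{w}',\bar{w}''$ (which are $(m+r')$-regular) provides exactly the slack needed to absorb the shift; the constant $r'$ depends only on the root system $\Phi$, specifically on the denominators relating fundamental coweights to coroots and on the propagation of admissibility across chambers as in \rl{adm}.

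For part (b), the same shift argument goes through whenever $\langle\psi,\check{\alpha}\rangle=0$, since the shift in the $\psi_\alpha$-direction then preserves the $\psi$-pairing and hence the constraint $u_\psi\leq_\psi u$. In the remaining case $\langle\psi,\check{\alpha}\rangle>0$, a naive shift would violate the $\psi$-constraint, but the inequality $\langle\psi,\pi((w_t)_C)\rangle\leq\pi(\bar{w})_\psi-d$ for $C\owns\psi$ (derived from $u_\psi\leq_\psi u$), together with \rl{k-reg} applied to the convex region $V^{\leq\pi(\bar{w})}$, bounds the deviation of $\langle\alpha,\pi((w_t)_C)\rangle$ from $\langle\alpha,\pi(w_C)\rangle$ by a bounded multiple of $d$. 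The extra $+2d$ in the regularity hypothesis is calibrated to absorb this deviation and force $\langle\alpha,\pi((w_t)_C)\rangle\geq m$ automatically in this case.

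The main obstacle is carrying out the shift argument rigorously in higher rank: admissibility is a global condition (\rl{adm}), so verifying that the shifted tuple remains below $\bar{w}',\bar{w}''$ (or $\bar{w}$) in every chamber, not just in $C$, requires chamber-by-chamber propagation analogous to the arguments in \rl{adm} and \rl{order2}. The precise value of $r'$ emerges from this bookkeeping; it depends only on $\Phi$, not on $m$, $d$, or the specific tuples involved.
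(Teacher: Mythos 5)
Your overall architecture matches the paper: decompose the intersection via \rl{decomp} into $\Fl^{\leq\ov{w}_t}$ with $\ov{w}_t$ the maximal compatible admissible tuples (using \rco{adm}(b) and \rl{semiinf}(b)), then argue by contradiction via a shift by $\ov{e}_{\psi}$. However, there is a real gap at the heart of the contradiction argument. You say the regularity gap $\geq r'$ between $\ov{w}_t$ and $\ov{w}',\ov{w}''$ provides ``exactly the slack needed to absorb the shift,'' but this does not follow: having $\ov{w}_t\leq\ov{w}'$ (i.e.\ $(w_t)_C\leq_C w'_C$ for all $C$) together with a large gap in the $\La$-components does not automatically entail $\ov{e}_{\psi}\ov{w}_t\leq\ov{w}'$, because the order $\leq_C$ on $\wt{W}$ is not determined by the order on $\pi$-projections. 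What closes this gap in the paper is \rcl{bound}, a uniform combinatorial statement: there is $r$ such that whenever $w\leq_C w'$ and $\langle\psi,\pi(w')-\pi(w)\rangle>r$ (for $\psi\in\Psi_C$ dual to $\al\in\Delta_C$), one may replace $w$ by $\check{\al}w$ and still have $\check{\al}w\leq_C w'$. Its proof is a finiteness argument on maximal elements of $\{\mu\,|\,\mu u\leq_C u'\}$ inside a fixed $W$-coset, and the constant $r$ is extracted from this finite set; your attribution of $r'$ to ``denominators relating fundamental coweights to coroots'' is not where it comes from. Without an analogue of \rcl{bound} the shift step is unjustified.

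Two smaller points. First, your worry about keeping the shifted tuple admissible (and the corresponding appeal to ``primitive admissible tuples supported on the $\psi_\al$-ray'') is unnecessary overhead: the paper reduces to \emph{quasi}-admissible maximal tuples, uses the quasi-admissible shift $\ov{e}_{\psi}\ov{w}$ (always quasi-admissible by \re{qadm}(e)), and only at the end invokes \rl{ord1}(a) (quasi-admissible $+$ regular $\Rightarrow$ admissible), so the chamber-by-chamber admissibility bookkeeping you anticipate never arises. Second, the paper does not run the contradiction directly against failure of $m$-regularity at some $(C,\al)$; instead it first establishes the uniform bound $\pi(\ov{w})_{\psi}\geq\mu_{\psi}-r$ for all $\psi$ (where $\ov{\mu}$ is the componentwise min of the projections, controlled by \rcl{int}(a)), and only then deduces $m$-regularity by a short computation with simple coroots. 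Your direct contradiction scheme can be made to work, but it is less clean because the same $\al$ can have nonzero pairing with several $\check{\al}_i$. For part (b) you correctly identify that the $\psi'=\psi$ case needs special treatment, but the mechanism is again \rcl{bound} (part (b) of it) rather than a direct appeal to \rl{k-reg}, and the $+2d$ offset is accounted for through \rcl{int}(b), not through bounding ``deviation by a bounded multiple of $d$.''
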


The proof is based on the following two claims:

\begin{Cl} \label{C:int}
 (a) For every two quasi-admissible tuples $\ov{\mu}',\ov{\mu}''\in\La^{\C{C}}$,
there exists a unique maximal quasi-admissible tuple $\ov{\mu}\in\La^{\C{C}}$ such
that $\ov{\mu}\leq\ov{\mu}'$ and $\ov{\mu}\leq\ov{\mu}''$.
 Moreover, $\ov{\mu}$ is $m$-regular, if both $\ov{\mu}'$ and
$\ov{\mu}''$ are $m$-regular.

\smallskip

(b) For every $\psi\in\Psi$, $m,d\in\B{N}$ and every
$(m+2d)$-regular quasi-admissible tuple $\ov{\mu}\in \La^{\C{C}}$,
the tuple $\ov{\nu}:=\ov{\mu}-d\ov{e}_{\psi}$ is $m$-regular.
\end{Cl}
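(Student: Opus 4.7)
My plan is to work throughout via the identification of quasi-admissible tuples in $\La^{\C{C}}$ with $\B{Z}^{\Psi}$ from Remark~\ref{E:qadm}(b), under which the partial order of \ref{N:kreg}(c) becomes coordinate-wise comparison and each component $\mu_C$ is recovered in the basis of simple coroots of $C$ by $\mu_C=\sum_i \mu_{\psi_i}\check{\al}_i$, where $\al_i$ ranges over $\Dt_C$ and $\psi_i\in\Psi_C$ is the fundamental weight dual to $\al_i$. A useful preliminary observation is that $(C,m)$-regularity is equivalent to the simple-root inequalities $\langle\al_j,\mu_C\rangle\geq m$ for $\al_j\in\Dt_C$, since any element of $\Phi_C$ is a non-negative integer combination of elements of $\Dt_C$ with total $\geq 1$.

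For part (a), I set $\mu_\psi:=\min(\mu'_\psi,\mu''_\psi)$ coordinate-wise; the corresponding quasi-admissible tuple is tautologically the unique maximum among tuples $\leq\ov{\mu}'$ and $\leq\ov{\mu}''$. To inherit $m$-regularity, fix $C\in\C{C}$ and $\al_j\in\Dt_C$, and assume without loss of generality that $\mu_{\psi_j}=\mu'_{\psi_j}$; then
\[
\langle\al_j,\mu_C\rangle-\langle\al_j,\mu'_C\rangle=\sum_{i\neq j}(\mu_{\psi_i}-\mu'_{\psi_i})\,\langle\al_j,\check{\al}_i\rangle.
\]
Each factor $\mu_{\psi_i}-\mu'_{\psi_i}$ is $\leq 0$ and each off-diagonal Cartan entry $\langle\al_j,\check{\al}_i\rangle$ is also $\leq 0$, so the difference is $\geq 0$; combined with the $(C,m)$-regularity of $\ov{\mu}'$ this yields $\langle\al_j,\mu_C\rangle\geq m$, as required.

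For part (b), the coordinate description of $\ov{\nu}=\ov{\mu}-d\ov{e}_\psi$ is $\nu_\psi=\mu_\psi-d$ and $\nu_{\psi'}=\mu_{\psi'}$ for $\psi'\neq\psi$. If $C\not\owns\psi$, then $(\nu_{\psi'})_{\psi'\in\Psi_C}=(\mu_{\psi'})_{\psi'\in\Psi_C}$, so $\nu_C=\mu_C$ and $(C,m+2d)$-regularity trivially implies $(C,m)$-regularity. If $C\owns\psi$ and $\al_\psi\in\Dt_C$ denotes the simple root dual to $\psi$, the same expansion gives $\nu_C=\mu_C-d\check{\al}_\psi$, whence $\langle\al_j,\nu_C\rangle-\langle\al_j,\mu_C\rangle=-d\langle\al_j,\check{\al}_\psi\rangle$; this equals $-2d$ when $\al_j=\al_\psi$ and is non-negative otherwise. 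Thus the hypothesized buffer of $2d$ is exactly what is needed to absorb the worst case, and $\langle\al_j,\nu_C\rangle\geq m$ in every case.

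I do not anticipate a serious obstacle here; the main point to get right is the consistent translation between the geometric description of quasi-admissible tuples in $V^{\C{C}}$ and their coordinates in $\B{Z}^{\Psi}$, after which the signs of Cartan-matrix entries do all the real work in both parts.
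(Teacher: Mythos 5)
Your proof is correct and follows essentially the same route as the paper's: identify quasi-admissible tuples with $\B{Z}^{\Psi}$ via \re{qadm}(b), take coordinate-wise minima in part (a), expand $\mu_C$ in simple coroots, and let the signs of the Cartan entries $\langle\al_j,\check{\al}_i\rangle$ do the work. Your preliminary reduction to simple roots (rather than all of $\Phi_C$) is also implicitly used in the paper, and your handling of the $C\not\owns\psi$ case in (b) just makes explicit what the paper folds into its "otherwise" clause.
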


\begin{proof}
(a) Notice that $\ov{\mu}\leq\ov{\mu}'$ if and only if
$\mu_{\psi}\leq\mu'_{\psi}$ for all $\psi\in\Psi$. Thus a maximal
$\ov{\mu}$ satisfies $\mu_{\psi}=\min\{\mu'_{\psi},\mu''_{\psi}\}$
for all $\psi\in\Psi$. This shows the first assertion.

\smallskip

For the second one, choose $C\in\C{C}$, let $\al_1,\ldots,\al_r$
be the simple roots of $C$, and let $\psi_1,\ldots,\psi_r$ be the
corresponding fundamental weights. We want to show that
$\langle\al_j, \mu_C\rangle\geq m$ for all $j$. Without loss of
generality, we may assume that $\mu_{\psi_j}=\mu'_{\psi_j}$.
Recall that $\mu_C=\sum_{i=1}^r\mu_{\psi_i}\check{\al}_i$ and
$\mu'_C=\sum_{i=1}^r\mu'_{\psi_i}\check{\al}_i$.
 Since we have $\langle \al_j,\check{\al}_j\rangle=2>0, \mu_{\psi_j}=\mu'_{\psi_j}$ and
 $\langle \al_j,\check{\al}_i\rangle\leq 0, \mu_{\psi_i}\leq\mu'_{\psi_i}$ for all $i\neq j$,
 we conclude that $\langle\al_j,\mu_C\rangle\geq
\langle\al_j,\mu'_C\rangle\geq m$.

\smallskip

(b) Let $C, \al_i$ and $\psi_i$ be as in the proof of part~(a). Then
for every $j$, the pairing $\langle\al_j, \nu_C\rangle$ equals
$\langle\al_j, \mu_C\rangle-d\langle\al_j,\check{\al}_i\rangle\geq
(m+2d)-2d=m$, if $\psi=\psi_i$, and equals $\langle\al_j,
\mu_C\rangle\geq m+2d\geq m$, otherwise.
\end{proof}

\begin{Cl} \label{C:bound}
(a) There exists $r\in\B{N}$ such that for every $C\in\C{C}$, every root
$\al\in\Phi_C$ with corresponding fundamental weight $\psi\in\Psi_C$,
and every elements $w,w'\in\wt{W}$ with $w\leq_C w'$, we have either
$\check{\al}w\leq_{C}w'$ or
$\langle\psi,\pi(w')-\pi(w)\rangle\leq r$.

\smallskip

(b)  There exists $r\in\B{N}$ such that for every $\psi\in\Psi$,
$\al\in\Phi$ and $w,w'\in\wt{W}$ such that $w\leq_{\psi} w'$ and
$\langle\psi,\al\rangle=1$,
 we have either $\check{\al}w\leq_{\psi}w'$ or
 $\langle\psi,\pi(w')-\pi(w)\rangle\leq r$.
\end{Cl}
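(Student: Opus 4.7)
The plan is to prove both parts in parallel by a pointwise analysis of the pairing with $\psi$. The underlying idea is that ``inserting a translation by $\check{\al}$'' between $w$ and $w'$ is detected by a single linear inequality, and that the defect in this inequality is controlled by a universal constant coming from the $W$-action on $A_0$.

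By \rco{ord}(b), the hypothesis $w\leq_\bullet w'$ (with $\bullet\in\{C,\psi\}$) implies $w'(x)-w(x)\in\C{K}_\bullet$ for every $x\in A_0$, where $\C{K}_\bullet$ denotes the closed cone generated by the coroots $\{\check{\beta}:\beta\in\Phi_\bullet\}$, with $\Phi_\bullet\in\{\Phi_C,\Phi(\psi)\}$. The cone $\C{K}_C$ is cut out by the inequalities $\langle\psi',\cdot\rangle\geq 0$, $\psi'\in\Psi_C$, while $\C{K}_\psi$ coincides with the half-space $\{v\in V:\langle\psi,v\rangle\geq 0\}$: every $\check{\beta}$ with $\beta\in\Phi(\psi)$ pairs non-negatively with $\psi$, and the $\Phi^\psi$-coroots span $\ker\psi$. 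Under the hypotheses of (a) and (b) the coroot $\check{\al}$ satisfies $\langle\psi,\check{\al}\rangle=1$, and in case (a) also $\langle\psi',\check{\al}\rangle=0$ for the remaining fundamental weights $\psi'\in\Psi_C$. Thus, granted $w\leq_\bullet w'$, the additional requirement $w'(x)-\check{\al}w(x)\in\C{K}_\bullet$ reduces to the single inequality $\langle\psi,w'(x)-w(x)\rangle\geq 1$ for all $x\in A_0$. The converse passage from this pointwise cone condition to the ordering $\check{\al}w\leq_\bullet w'$ is supplied by \rp{order} in case (a)---multiplying on the left by a sufficiently regular $\mu\in\La\cap C$ reduces the question to Bruhat order in $\wt{W}\cap C$, where \rl{border}(b) permits appending to the chain realising $w\leq_C w'$ the two reflections $s_{(\al,0)}$ and $s_{(\al,-1)}$ whose product is $\check{\al}$---and by \rl{semiinf}(a) in case (b), both of those reflections lying in $P_\psi^{\sc}(K)$ since $\al\in\Phi(\psi)$.

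To conclude, decompose $w'(x)-w(x)=(\pi(w')-\pi(w))+(\bar{w}'(x)-\bar{w}(x))$ where $\bar{w},\bar{w}'\in W$ are the linear parts of $w,w'$. Since $A_0$ is bounded and $W,\Psi$ finite,
\[
M:=\sup\bigl\{|\langle\psi,(u-u')(x)\rangle|:u,u'\in W,\ x\in A_0,\ \psi\in\Psi\bigr\}
\]
is finite and depends only on $\Phi$. Set $r:=\lfloor M\rfloor+1$. If $\check{\al}w\not\leq_\bullet w'$, the analysis above yields some $x\in A_0$ with $\langle\psi,w'(x)-w(x)\rangle<1$, whence
\[
\langle\psi,\pi(w')-\pi(w)\rangle<1+\langle\psi,\bar{w}(x)-\bar{w}'(x)\rangle\leq 1+M\leq r,
\]
which is the required bound. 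The main obstacle is the converse implication invoked above: the orderings $\leq_C$ and $\leq_\psi$ are defined combinatorially, so while the pointwise cone inclusion is the natural necessary condition, realising $\check{\al}w$ as an explicit $\wt{\Phi}_\bullet$-reduct of $w'$ relies on the finer structural results of the paper (\rp{order} and \rl{semiinf}) rather than pure cone theory, and one must take care that the appended reflections decomposing $\check{\al}$ land in the correct positive subset $\wt{\Phi}_\bullet$.
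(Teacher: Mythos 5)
Your approach differs fundamentally from the paper's. The paper fixes finite Weyl group representatives $u,u'$ (reducing by left-translation to $w'=u'$), introduces the down-set $S_C=\{\mu\in\La : \mu u\leq_C u'\}$, observes that its maximal elements $S_C^{\max}$ with respect to $\leq_C$ form a finite nonempty set, defines $r$ from $S_C^{\max}$, and then verifies the bound by showing that if $\check{\al}\mu\notin S_C$ then $\mu$ agrees with a maximal element in the $\psi$-direction. Your proposal instead reads the comparison off a pointwise cone inequality on $A_0$ and claims the defect is controlled by the universal constant $M$ coming from the finite Weyl action on $A_0$, which if correct would give a much sharper and more explicit $r$.

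The gap is in the step you yourself flag as ``the main obstacle'': you need the implication
\[
\bigl[\,w\leq_C w'\ \text{ and }\ \langle\psi,w'(x)-w(x)\rangle\geq 1\ \forall x\in A_0\,\bigr]\ \Longrightarrow\ \check{\al}w\leq_C w',
\]
and neither the paper nor your sketch establishes it. \rco{ord}(b) gives only the forward (combinatorial $\Rightarrow$ cone) implication, and \rco{ord}(c) proves the converse only for elements of $\La$, not for general $w\in\wt W$. Your proposed mechanism — ``appending $s_{(\al,0)}$ and $s_{(\al,-1)}$ to the chain realising $w\leq_C w'$'' — does not produce a decreasing chain for $\check{\al}w\leq_C w'$: by \rl{ord}(b), $\check{\al}w$ lies \emph{above} $w$ in $\leq_\al$, so prepending those two reflections to a chain from $w'$ down to $w$ moves in the wrong direction. \rl{border}(b) supplies a saturated Bruhat chain inside $C$ but offers no device for inserting a translation step, and \rl{semiinf}(a) is a geometric characterisation of $\leq_\psi$, not a chain-manipulation tool. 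The paper's finiteness argument is designed precisely to sidestep this converse; without it your bound $r=\lfloor M\rfloor+1$ has no justification, and it is quite possibly too small, since the paper's $r$ is extracted from the combinatorics of $S_C^{\max}$ rather than from the finite Weyl geometry of $A_0$ alone.
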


\begin{proof}
Since $W$ is finite, in both cases (a) and (b) it will be enough
to find $r$ to satisfy the condition for  $w\in\La u$ and $w'\in\La u'$, where
$u,u'\in W$ are fixed. Moreover, using Remark~\re{order}(a)(ii), we may
assume that $w'=u'$. Similarly, we fix $C\in\C{C}$, and
$\al\in\Phi_C$ with corresponding $\psi\in\Psi_C$.

In the case (a) we consider the set $S_C$  of all
$\mu\in\La$ such that $\mu u\leq_C u'$.
Then every $\mu\in S_C$ satisfies $\mu\leq_C 0$, hence
the set $S_C^{\max}$  of all maximal elements of $S_C$ with respect to the ordering $\leq_C$ is finite and non-empty.
We take $r\in\B{N}$ to be the maximum of all $-\langle\psi,\mu\rangle$ taken over all $\mu\in S_{C}^{\max}$.

In the case (b) we consider the set $S_{\psi}$ of all
$\mu'\in\La$ such that $\mu' u\leq_{\psi} u'$. Then every $\mu'\in
S_{\psi}$ satisfies $\mu'\leq_{\psi}0$, hence the set $S_{\psi}^{\max}$ of all maximal elements of $S_{\psi}$ with respect to the ordering $\leq_{\psi}$ is a finite and non-empty
union of cosets of $\La^{\psi}:=\{\mu\in\La\,|\,\lan\psi,\mu\ran=0\}$.
We take $r\in\B{N}$ to be the maximum of all $-\langle\psi,\mu'\rangle$, taken
over all  $\mu'\in S^{\max}_{\psi}$.

Then in both cases $r$ satisfies the required property. Indeed,
assume that $\mu\in S_C$ (resp. $\mu\in S_{\psi}$)
while $\check{\al}\mu\notin S_C$, (resp. $\check{\al}\mu\notin
S_C$), and we want to check that $\langle\psi,\mu\rangle\geq -r$.
Choose any $\mu'\in S_C^{\max}$ (resp.  $\mu'\in S_{\psi}^{\max}$) be such
that $\mu'\geq_C\mu$ (resp.  $\mu'\geq_{\psi}\mu$.) Then $\mu'-\mu=\sum_{\beta\in\Dt_C}m_{\beta}\check{\beta}$
and $m_{\al}\geq 0$. Since $\check{\al}\mu\notin S_C$ (resp. $\check{\al}\mu\notin S_{\psi}$), we have $m_{\al}=0$,
thus $\langle\psi,\mu\rangle=\langle\psi,\mu'\rangle\geq -r$.
\end{proof}

Now we are ready to prove \rl{int}.

\begin{Emp}
\begin{proof}[Proof of \rl{int}]
(a) Let $r\in\B{N}$ be as in \rcl{bound}(a). We will show that
$r':=2r$ satisfies the required property. Let
$\ov{w}',\ov{w}''\in\wt{W}^{\C{C}}$ be $(m+r')$-regular admissible
tuples. Then, by \rl{decomp}, there exist admissible tuples
$\ov{w}_1\ldots,\ov{w}_n\in\wt{W}^{\C{C}} $ such that
$\Fl^{\leq\ov{w}'}\cap \Fl^{\leq\ov{w}''}=\bigcup_{t=1}^n \Fl^{\leq\ov{w}_t}$.

\smallskip

Using \rco{adm}(b), one can assume that each  $\ov{w}_t$
is a maximal admissible tuple, satisfying
$\ov{w}_t\leq\ov{w}',\ov{w}''$, and we have to show that each $\ov{w}_t$ is
$m$-regular.

\smallskip

 Let $\ov{w}\in\wt{W}^{\C{C}}$ be a maximal quasi-admissible tuple,
satisfying $\ov{w}\leq\ov{w}',\ov{w}''$.  It is enough to show that
such a $\ov{w}$ is $m$-regular. Indeed, \rl{ord1}(a) then would imply
that $\ov{w}$ is admissible.

Set $\ov{\mu}':=\pi(\ov{w}')$, and $\ov{\mu}'':=\pi(\ov{w}'')$,
and let $\ov{\mu}$ be the maximal tuple such that $\ov{\mu}\leq
\ov{\mu}'$ and $\ov{\mu}\leq\ov{\mu}''$. Then  $\ov{\mu}$ is
$(m+r')$-regular by \rcl{int}(a), and  $\pi(\ov{w})\leq\ov{\mu}$
by \rco{ord}(b).

\smallskip

It is enough to show that $\pi(\ov{w})_{\psi}\geq \mu_{\psi}-r$
for every $\psi\in\Psi$. Indeed, if this is shown, then for every
$C\in\C{C}$ with simple roots $\al_1,\ldots,\al_r$, we have
$\pi(w_C)=\mu_C-\sum_i r_i\check{\al}_i$ and $0\leq r_i\leq r$ for
all $i$. Then $\langle\al_i,\pi(w_C)\rangle\geq
\langle\al_i,\mu_C\rangle-2r_i\geq (m+2r)-2r=m$. Thus $\ov{w}$ is
$m$-regular.

\smallskip

Assume that there exists $\psi\in\Psi$ such that
$\pi(\ov{w})_{\psi}<\mu_{\psi}-r$. Consider the quasi-admissible
tuple $\ov{e}_{\psi}$ defined by
$(\ov{e}_{\psi})_{\psi'}:=\delta_{\psi,\psi'}$ (see Section~\re{qadm}(c)).
Then the quasi-admissible  tuple
$\ov{e}_{\psi}\ov{w}$ (see Section~\re{qadm}(f)) satisfies identities
$(\ov{e}_{\psi}\ov{w})_C=w_C$ if $\psi\notin \Psi_C$, and
$(\ov{e}_{\psi}\ov{w})_C=\check{\al}\ov{w}_C$ if $\psi\in \Psi_C$ and $\al\in\Dt_C$
corresponds to $\psi$.

\smallskip

Since $\ov{w}\leq\ov{w}'$ and $\ov{w}\leq\ov{w}''$,
the assumption $\pi(\ov{w})_{\psi}<\mu_{\psi}-r$ together with
\rcl{bound}(a) implies that  $\ov{e}_{\psi}\ov{w}\leq\ov{w}'$ and
$\ov{e}_{\psi}\ov{w}\leq\ov{w}''$. Since $\ov{w}<\ov{e}_{\psi}\ov{w}$, this
contradicts the maximality of $\ov{w}$.

\smallskip

(b) The proof is similar to that of part~(a). Let $r\in\B{N}$ to
satisfy both \rcl{bound}(a),(b), and set $r':=2r$. Assume that
$\ov{w}\in\wt{W}^{\C{C}}$ is $(m+2d+r')$-regular,
$u\in\wt{W}$ satisfies
$\langle\psi,\pi(u)\rangle=\pi(\ov{w})_{\psi}-d$, and let
$\ov{w}'$ be a maximal quasi-admissible tuple satisfying
$\ov{w}'\leq\ov{w}$ and $\ov{w}'_{\psi}\leq_{\psi}u$.

\smallskip

Using \rl{decomp}(b) and \rl{semiinf}(b), and arguing as in part~(a),
it is enough to show that $\pi(\ov{w})_{\psi'}\geq \mu_{\psi'}-r$
for every $\psi'\in\Psi$.

\smallskip

Assume that there exists $\psi'\in\Psi$ such that
$\pi(\ov{w})_{\psi'}<\mu_{\psi'}-r$, and let $\ov{e}_{\psi'}\ov{w}$ be as
in part~(a). Again to get a contradiction, it is enough to show that
$\ov{e}_{\psi'}\ov{w}\leq\ov{w}'$ and $(\ov{e}_{\psi'}\ov{w})_{\psi}\leq_{\psi} u$.
The proof of the first inequality is identical to that of part~(a). Next, if
$\psi'\neq\psi$, then
$(\ov{e}_{\psi'}\ov{w})_{\psi}=\ov{w}_{\psi}\leq_{\psi} u$ by assumption.
Finally, if $\psi'=\psi$, the inequality
$(\ov{e}_{\psi'}\ov{w})_{\psi}\leq_{\psi} u$ follows from \rcl{bound}(b).
\end{proof}
\end{Emp}

\begin{Lem} \label{L:finite}
There exists $r\in\B{N}$ such that for every $m\in\B{N}$
 and every $(m+r)$-regular $\ov{w}\in\wt{W}^{\C{C}}$, there exists an $m$-regular
 $\ov{x}\in\La^{\C{C}}$ such that $\Fl^{\leq'\ov{x}}\subseteq \Fl^{\leq\ov{w}}$.
\end{Lem}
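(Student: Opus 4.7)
The idea is to translate the geometric inclusion into an inequality in the affine Weyl group via \rco{adm}(f) and~\rco{adm}(b), and then realize $\ov{x}$ as $\pi(\ov{w})$ shifted by a fixed admissible vector lying deep in $C_0$. Concretely, \rco{adm}(f) rewrites the target as $\Fl^{\leq\ov{x}\cdot\ov{w}_{\on{st}}}\subset\Fl^{\leq\ov{w}}$; assuming $\ov{x}$ is admissible, so is $\ov{x}\cdot\ov{w}_{\on{st}}$ (Remark~\re{qadm}(e)), and then~\rco{adm}(b) converts this into the combinatorial inequality $\ov{x}\cdot\ov{w}_{\on{st}}\le\ov{w}$, that is, $x_Cu_C\le_C w_C$ for every $C=u_C(C_0)$. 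We assume throughout that $\ov{w}$ is admissible, which is the case relevant to the intended applications.

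To construct $\ov{x}$, fix $\rho_0\in\La\cap C_0$ large enough that $\rho_0\cdot v\in\wt{W}\cap C_0$ for every $v\in W$; this is possible because the finitely many alcoves $v(A_0)$ share $0$ as a vertex and have uniformly bounded diameter. Let $\ov{\rho}\in\La^{\C{C}}$ be the admissible tuple $\rho_C:=u_C(\rho_0)$ from Example~\re{exam}(a), set $r:=\max_{\beta\in\Phi_{C_0}^+}\lan\beta,\rho_0\ran$, and define
\[
\ov{x}:=\pi(\ov{w})-\ov{\rho}\in\La^{\C{C}}.
\]
The tuple $\ov{x}$ is quasi-admissible as a difference of two admissible tuples (Remarks~\re{qadm}(a),(d)); and for every $\al\in\Phi_C$ we have
\[
\lan\al,x_C\ran=\lan\al,\pi(w_C)\ran-\lan u_C^{-1}(\al),\rho_0\ran\ge(m+r)-r=m,
\]
since $u_C^{-1}(\al)\in\Phi_{C_0}^+$. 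Hence $\ov{x}$ is $m$-regular, and so admissible by~\rl{ord1}(a).

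To verify the inequality $x_Cu_C\le_C w_C$, write $w_C=\pi(w_C)v_C$ with $v_C\in W$. Multiplying on the left successively by $-\pi(w_C)\in\La$ (Remark~\re{order}(a)(i)), by $u_C^{-1}\in W$ (Remark~\re{order}(a)(ii), which sends $\Phi_C$ to $\Phi_{C_0}$ and $u_C(\rho_0)$ to $\rho_0$), and by $\rho_0\in\La$ (Remark~\re{order}(a)(i) once more) reduces the goal to
\[
e\le_{C_0}\rho_0\cdot(u_C^{-1}v_C).
\]
By the choice of $\rho_0$ the right-hand side lies in $C_0$, and $e\le\rho_0\cdot(u_C^{-1}v_C)$ trivially in Bruhat order, so \rco{order}(b) supplies the required inequality. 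The main technical point is this three-step bookkeeping reduction; once the $u_C$-conjugations are handled correctly, the existence of a universal constant $r$ is forced by the finiteness of $W$ and the boundedness of the alcoves $v(A_0)$.
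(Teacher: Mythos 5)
Your proof is correct and follows essentially the same route as the paper: both arguments take $\ov{x}$ to be $\pi(\ov{w})$ shifted down by the admissible tuple $\{u(\rho_0)\}_{u(C_0)}$ attached to a fixed regular dominant coweight $\rho_0\in\La\cap C_0$, verify $m$-regularity by the same pairing estimate, and reduce the combinatorial inequality $\ov{x}\cdot\ov{w}_{\on{st}}\le\ov{w}$ via \rco{adm}(f), \re{order}(a) and \rco{order}(b) to showing $e\le_{C_0}\rho_0 u$ for $u\in W$ (your extra hypothesis that $\rho_0 v\in\wt{W}\cap C_0$ for all $v\in W$ is in fact automatic for any $\rho_0\in\La\cap C_0$, as a direct computation of affine roots shows, so it costs nothing). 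The only cosmetic differences are that you take $r$ to be the max of $\lan\beta,\rho_0\ran$ over all positive roots rather than just simple roots, and you make explicit the admissibility assumption on $\ov{w}$ that the paper's proof also uses implicitly.
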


\begin{proof}
Choose any $\mu\in\La\cap C_0$, let $\ov{\mu}\in\La^{\C{C}}$ be the
admissible tuple defined by $\mu_{u(C_0)}:=u(\mu)$ (see Section~\re{exam}(a)), and let $r$ be
the maximum of the $\langle\al,\mu\rangle$'s, where $\al$ runs over all of $\Dt_{C_0}$. We claim that this $r$ satisfies the
required property.

\smallskip

Namely, to every  $(m+r)$-regular admissible tuple
$\ov{w}\in\wt{W}^{\C{C}}$, we associate a quasi-admissible tuple
$\ov{x}:=\ov{\mu}^{-1}\pi(\ov{w})$ (see Section~\re{qadm}(e)). We claim that
$\ov{x}$ is $m$-regular, and $\Fl^{\leq'\ov{x}}\subseteq \Fl^{\leq
\ov{w}}$.

\smallskip

To show that $\ov{x}$ is $m$-regular, we note that for
every $u\in W$, $C=u(C_0)\in\C{C}$ and $\al\in\Dt_C$, we have
$\langle\al,x_C\rangle=\langle\al,\pi(w_C)\rangle-\langle\al,u(\mu)\rangle\geq
(m+r)-r=m$.

\smallskip

Next we observe that $\Fl^{\leq'\ov{x}}=\Fl^{\leq \ov{x}\cdot
\ov{w}_{\on{st}}}$ (use \rco{adm}(f)). So it remains to show that
$\ov{x}\cdot\ov{w}_{\on{st}}\leq\ov{w}$ or, what is the same, $x_C
u\leq_C w_C$ for each $C=u(C_0)\in\C{C}$. Unwinding the definitions and using Section~\re{order}(a), it is enough to show that for every
$u\in W$ we have $1\leq_{C_0}\mu u$. By \rco{order}, it remains to show that $\mu u\in \wt{W}\cap C_0$,
that is, for every $\al\in\Phi_{C_0}$ we have $(\mu u)^{-1}(\al)>0$. But $(\mu
u)^{-1}(\al)=u^{-1}(\mu^{-1}(\al))=(u^{-1}(\al),\langle\al,\mu\rangle)>0$,
because $\langle\al,\mu\rangle>0$.
\end{proof}

\begin{Lem} \label{L:seq}
There exists $r\in\B{N}$ such that for every $m\in\B{Z}$ and every
$(m+r)$-regular  tuple $\ov{x}\in\La^{\C{C}}$
there exists a sequence $\ov{x}=\ov{x}_0\leq\ov{x}_1\leq\ldots$  in $\La^{\C{C}}$
such  that sequence $\{(x_i)_{\psi}\}_i$ tends to infinity for all $\psi\in\Psi$,
each  $\ov{x}_i$ is $m$-regular, and $\ov{x}_i=\ov{x}_{i-1}+\ov{e}_{\psi_{i}}$ for some $\psi_i\in\Psi$ and all $i$.
\end{Lem}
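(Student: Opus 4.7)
The plan is to construct the sequence by iterating a basic ``cycle'' whose net effect is to add a fixed regular admissible increment $\ov{\mu}$. First, I would fix $\mu\in\La\cap C_0$ strictly dominant (which exists because $\La$ is a full-rank lattice and $C_0$ is open), and let $\ov{\mu}\in\La^{\C{C}}$ be the admissible tuple from \re{exam}(a) defined by $\mu_{u(C_0)}:=u(\mu)$. Under the identification of \re{qadm}(b), $\ov{\mu}$ corresponds to a tuple in $\B{Z}^{\Psi}$ with every coordinate positive; explicitly, $\mu_\psi=\langle u^{-1}(\psi),\mu\rangle\geq 1$ whenever $\psi\in\Psi_{u(C_0)}$. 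In particular $\ov{\mu}=\sum_{\psi\in\Psi}\mu_\psi\ov{e}_\psi$, and $\langle\al,\mu_C\rangle\geq 1$ for every $C\in\C{C}$ and every $\al\in\Phi_C$.

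Next I would set $N:=\sum_\psi\mu_\psi$, $r_0:=\max\{|\langle\al,\check{\beta}\rangle|:\al,\beta\in\Phi\}$, and $r:=N r_0$, all depending only on $\Phi$. Fix an enumeration $\psi_{(1)},\ldots,\psi_{(N)}$ of $\Psi$ in which each $\psi$ appears exactly $\mu_\psi$ times, so that $\sum_{j=1}^N\ov{e}_{\psi_{(j)}}=\ov{\mu}$. Given $(m+r)$-regular $\ov{x}$, I would set $\ov{x}_0:=\ov{x}$ and define inductively $\ov{x}_{kN+j}:=\ov{x}_{kN+j-1}+\ov{e}_{\psi_{(j)}}$ for $k\geq 0$ and $1\leq j\leq N$. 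Then $\ov{x}_{(k+1)N}=\ov{x}_{kN}+\ov{\mu}$, so after $k$ cycles the $\psi$-coordinate equals $x_\psi+k\mu_\psi$, which tends to infinity with $k$ for each $\psi$.

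The main verification is that every $\ov{x}_i$ is $m$-regular, which I would prove by induction on $k$, showing that $\ov{x}_{kN}$ is $(m+r)$-regular. Granting this, for any intermediate $\ov{x}_{kN+j}$, any $C\in\C{C}$, and any $\al\in\Phi_C$,
\[
\langle\al,(x_{kN+j})_C\rangle=\langle\al,(x_{kN})_C\rangle+\sum_{i=1}^j\langle\al,(\ov{e}_{\psi_{(i)}})_C\rangle\geq (m+r)-Nr_0=m,
\]
using that $|\langle\al,(\ov{e}_\psi)_C\rangle|\leq r_0$ since $(\ov{e}_\psi)_C$ is either $0$ or a simple coroot of $\Phi_C$. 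When $j=N$ the sum above equals $\langle\al,\mu_C\rangle\geq 1$, so $\ov{x}_{(k+1)N}$ is in fact $(m+r+1)$-regular, completing the inductive step. The only delicate point is choosing $r$ large enough to absorb the worst-case partial-sum drop of size $Nr_0$; since both $N$ and $r_0$ depend only on the root system $\Phi$, no serious obstacle is expected.
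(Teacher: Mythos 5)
Your proof is correct and follows essentially the same strategy as the paper's: choose a strictly dominant $\mu\in\La\cap C_0$, form the corresponding admissible tuple $\ov{\mu}$ with total $\B{Z}^{\Psi}$-coordinates summing to $N$, build the sequence by cyclically adding $\ov{e}_{\psi}$'s so that each full cycle adds $\ov{\mu}$, and take $r$ large enough (depending only on $\Phi$ and $\mu$) to absorb the worst partial-cycle drop. Your explicit bound $r=Nr_0$ and the inductive check that $\ov{x}_{kN}$ gains regularity with each cycle are, if anything, a bit more carefully spelled out than the paper's one-line conclusion, but the underlying idea is identical.
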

\begin{proof}
Choose $\mu\in\La\cap C_0$, let $\ov{\mu}\in\La^{\C{C}}$ be the
tuple $\mu_{u(C_0)}:=u(\mu)$ from Section~\re{exam}(a). Then $\ov{\mu}$ is a
regular and admissible, and let $\ov{y}\in\B{N}^{\Psi}$ be the corresponding tuple (see Section~\re{qadm}(b) and \rl{ord1}(b)).
Choose a sequence $\ov{y}_0=0,\ov{y}_1,\ldots,\ov{y}_n=\ov{y}$ in $\B{N}^{\Psi}$
such that $\ov{y}_i-\ov{y}_{i-1}=\ov{e}_{\psi_i}$ for all $i$ and some
$\psi_i\in\Psi$, and continue it to all $i$ by the rule
$ \ov{y}_{i+n}:=\ov{y}_i+\ov{y}$.

\smallskip

Define $r$ to be the maximum of the $-\langle (y_i)_C,\al\rangle$'s,
taken over $i=1,\ldots,n$, $C\in\C{C}$ and $\al\in\Dt_C$.
Then the sequence $\ov{x}_i:=\ov{x}+\ov{y}_i$ satisfies the
required property.
\end{proof}

\subsection{Stratification of the affine flag variety}
\begin{Not} \label{E:redgroups}
(a) Let $k$, $K$ and $\C{O}$ be as in Section~\re{groups}, let $G$ be a
connected reductive group over $k$, and let $T\subseteq G$ be a maximal torus.

\smallskip

(b) Let $G^{\sc}$ be the simply connected covering of the derived group of $G$, and let
$T_{G^{\sc}}\subseteq G^{\sc}$ be the corresponding maximal torus, that is, the pullback of $T\subseteq G$. Let $\Phi$ be the root system $\Phi(G,T)=\Phi(G^{\sc},T_{G^{\sc}})$, let $\Psi$ the set of dominant weights, and let $\wt{W}$ the affine Weyl group of $G^{\sc}$.

\smallskip

(c) We choose an Iwahori subgroup $I\subseteq LG$ as in Section~\re{notation}, set $I^{\sc}:=I\cap L(G^{\sc})\subseteq L(G^{\sc})$, and let
$\Fl=\Fl_{G^{\sc}}:=L(G^{\sc})/I^{\sc}$ the affine flag variety of $G^{\sc}$.
\end{Not}

\begin{Not} \label{E:redgp}
In the situation of Section~\re{redgroups}, fix $\psi\in\Psi\subseteq X^*(T_{G^{\sc}})$.

\smallskip

(a) Let $P_{\psi}, M_{\psi}, U_{\psi}, M^{\sc}_{\psi}$ and $P^{\sc}_{\psi}$ be as in Section~\re{psi}(b).
Notice that groups $M^{\sc}_{\psi}, U_{\psi}$ and $P^{\sc}_{\psi}$ would not change if we replace group $G$ by $G^{\sc}$.

\smallskip

(b) As in Section~\re{psi}(b), we have a natural homomorphism $P^{\sc}_{\psi}\to G^{\sc}$, thus the loop group $L(P^{\sc}_{\psi})$ acts on $\Fl$.
For every $w\in \wt{W}$, we denote by $\Fl^{\leq_{\psi}w}\subseteq
\Fl$ the closure of the $L(P^{\sc}_{\psi})$-orbit $L(P^{\sc}_{\psi})w\subseteq \Fl$.

\smallskip

(c) As in Section~\re{groups}, we have an equality $\La=X_*(T^{\sc})$. As in Section~\re{fund}(b), the coweight $\check{\psi}$ belongs to $\La_{\B{Q}}$. We denote by $T_{\psi}\subseteq T_{G^{\sc}}$ the one-dimensional subtorus such that $X_*(T_{\psi})\subseteq \La$
equals $\B{Z}\check{\psi}\cap \La\subseteq\La_{\B{Q}}$.

\smallskip

(d) Alternatively, $T_{\psi}$ can be defined as the connected center of the Levi subgroup $(M_{\psi})_{G^{\sc}}$ of $G^{\sc}$, where
$(M_{\psi})_{G^{\sc}}\subseteq G^{\sc}$ is the pullback of $M_{\psi}\subseteq G$.
\end{Not}

\begin{Emp} \label{E:strata}

{\bf Stratification.}

\smallskip

(a) For each $\nu\in \wt{W}_{\psi}$, we set $Z_{\nu}:=\Fl^{\leq_{\psi}\nu}\sm\bigcup_{\nu'<_{\psi}\nu}\Fl^{\leq_{\psi}\nu'}$.
Then each $Z_{\nu}\subseteq \Fl$ is a reduced locally closed $L(P^{\sc}_{\psi})$-invariant ind-subscheme.
Moreover, since $\wt{W}_{\psi}$ is a set of representatives of the set of cosets $\wt{W}^{\psi}\bs\wt{W}$ (see Section~\re{fund}(d)), the set
$\{Z_{\nu}\}_{\nu\in\wt{W}_{\psi}}$ forms a stratification of $\Fl$.
\smallskip

(b) For each $\nu\in \wt{W}_{\psi}$, we set $I_{\nu}:=\nu I\nu^{-1}\subseteq LG$, $I_{P_{\psi},\nu}:=I_{\nu}\cap L(P_{\psi})\subseteq L(P_{\psi})$,  $I_{M_{\psi},\nu}:=I_{\nu}\cap L(M_{\psi})\subseteq L(M_{\psi})$ and $I_{U_{\psi},\nu}:=I_{\nu}\cap L(U_{\psi})\subseteq L(U_{\psi})$, and let $I_{P^{\sc}_{\psi},\nu}\subseteq L(P^{\sc}_{\psi})$ and $I_{M^{\sc}_{\psi},\nu}\subseteq L(M^{\sc}_{\psi})$ be the preimages of $I_{P_{\psi},\nu}$ and $I_{M_{\psi},\nu}\subseteq L(M_{\psi})$, respectively. We also set $\Fl_{P^{\sc}_{\psi},\nu}:=L(P^{\sc}_{\psi})/I_{P^{\sc}_{\psi},\nu}$ and $\Fl_{M^{\sc}_{\psi},\nu}:=L(M^{\sc}_{\psi})/I_{M^{\sc}_{\psi},\nu}$.

\smallskip

(c) Notice that isomorphism $U_{\psi}\times M_{\psi}\isom P_{\psi}:(u,m)\mapsto um$ induces isomorphisms $U_{\psi}\times M^{\sc}_{\psi}\isom P^{\sc}_{\psi}$ and $I_{U_{\psi},\nu}\times I_{M^{\sc}_{\psi},\nu}\isom I_{P^{\sc}_{\psi},\nu}$, and that the embedding and the projection $M^{\sc}_{\psi}\to P^{\sc}_{\psi}\to M^{\sc}_{\psi}$ induce morphisms $\Fl_{M^{\sc}_{\psi},\nu}\overset{i_{\psi,\nu}}{\lra} \Fl_{P^{\sc}_{\psi},\nu}\overset{p_{\psi,\nu}}{\lra}\Fl_{M^{\sc}_{\psi},\nu}$.

\smallskip

(d) By \rl{semiinf}(a), each $Z_{\nu}\subseteq\Fl$ is an $L(P^{\sc}_{\psi})$-orbit of $\nu\in\Fl$.
Moreover, $L(P^{\sc}_{\psi})\simeq L(M^{\sc}_{\psi})\times L(U_{\psi})$ is reduced (see \cite{BD}, if $k$ is of characteristic zero, and \cite{Fa} in general), so the morphism $[h]\mapsto h\nu$ induces an isomorphism $\iota_{\nu}:\Fl_{P^{\sc}_{\psi},\nu}\isom Z_{\nu}$.

\smallskip

(e) Since $T_{G^{\sc}}$ normalizes $P^{\sc}_{\psi}$ and fixes $\nu\in \Fl$, the
orbit $Z_{\nu}\subseteq \Fl$ is $T_{G^{\sc}}$-invariant, hence $T_{\psi}$-equivariant. Furthermore,
the isomorphism $\iota_{\nu}$ of part~(d) identifies the $T_{\psi}$-action on $Z_{\nu}$ with
the $T_{\psi}$-action on $\Fl_{P^{\sc}_{\psi},\nu}$ given by the formula $t[um]=[tut^{-1}m]$ for
$u\in L(U_{\psi})$ and $m\in L(M^{\sc}_{\psi})$. In particular, isomorphism  $\iota_{\nu}$ induces an isomorphism
$\iota^{T_{\psi}}_{\nu}:\Fl_{M^{\sc}_{\psi},\nu}\isom Z^{T_{\psi}}_{\nu}:[m]\mapsto m\nu$, where $Z_{\nu}^{T_{\psi}}$ denotes the locus of $T_{\psi}$-fixed points.

\smallskip

(f) As in Section~\re{notation}(b), the choice of the Weyl chamber $C_0^{\psi}$ of $\Phi^{\psi}$ (see Section~\re{fund}(d)) gives rise to an Iwahori subgroup $I_{M^{\sc}_{\psi}}\subseteq L(M^{\sc}_{\psi})$, hence the affine flag variety $\Fl_{M^{\sc}_{\psi}}:=L(M^{\sc}_{\psi})/I_{M^{\sc}_{\psi}}$. Moreover, since $\nu\in \wt{W}_{\psi}$, we have an inclusion $I_{M^{\sc}_{\psi}}\subseteq I_{\nu}$. Furthermore, we have an equality $I_{M^{\sc}_{\psi}}=I_{M^{\sc}_{\psi},\nu}$ (use, for example, \cite[Lemma~B.1.5]{BV}), thus an isomorphism  $\Fl_{M^{\sc}_{\psi}}\isom \Fl_{M^{\sc}_{\psi},\nu}$.

\smallskip

(g) Note that the affine flag variety $\Fl_{M^{\sc}_{\psi},\nu}$ of part~(c) is ind-proper. Hence $\Fl_{M^{\sc}_{\psi},\nu}\subseteq\Fl$ is closed, so
it follows from part~(e) that each $Z^{T_{\psi}}_{\nu}\subseteq \Fl^{T_{\psi}}$ is open and closed.
\end{Emp}




\begin{Emp} \label{E:retract}
{\bf Retraction.}
Let $Y$ be an ind-scheme, and let $Z\subseteq Y$ be a locally closed ind-subscheme. A morphism $p:Y\to Z$ is
called a {\em retraction}, if the restriction $f|_Y$ is the identity.
%
%
\end{Emp}

\begin{Lem} \label{L:strata}
For every element $\nu\in\wt{W}_{\psi}$, there is a unique $T_{\psi}$-equivariant retraction $p_{\nu}:Z_{\nu}\to Z_{\nu}^{T_{\psi}}$.
Moreover, under an isomorphisms 
of Sections~\re{strata}(d)-(f), the retraction $p_{\nu}$ corresponds to the projection
$p_{\psi,\nu}:\Fl_{P^{\sc}_{\psi},\nu}\to\Fl_{M^{\sc}_{\psi}}:[um]\mapsto [m]$.
\end{Lem}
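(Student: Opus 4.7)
The plan is to transport everything through the isomorphisms $\iota_\nu$ and $\iota_\nu^{T_\psi}$ of \re{strata}(d)--(f) and show that, after this transport, the desired retract is exactly the projection $p_{\psi,\nu}:\Fl_{P^{\sc}_\psi,\nu}\to\Fl_{M^{\sc}_\psi,\nu}\cong\Fl_{M^{\sc}_\psi}$ constructed in \re{strata}(c). So I would first reduce the statement to showing that $p_{\psi,\nu}$ is the unique $T_\psi$-equivariant retract onto the image of $i_{\psi,\nu}$, where the $T_\psi$-action on the source is $t[um]=[tut^{-1}m]$ and the action on the target is trivial (since, under $\iota_\nu^{T_\psi}$, the target is precisely the fixed locus $Z_\nu^{T_\psi}$).

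For existence I would check two things. The retract property is immediate from the factorization $M^{\sc}_\psi\hookrightarrow P^{\sc}_\psi\twoheadrightarrow M^{\sc}_\psi$ being the identity, which gives $p_{\psi,\nu}\circ i_{\psi,\nu}=\Id$ on $\Fl_{M^{\sc}_\psi,\nu}$. The $T_\psi$-equivariance is visible from the formula: $p_{\psi,\nu}(t[um])=p_{\psi,\nu}([tut^{-1}m])=[m]=p_{\psi,\nu}([um])$, which equals $t\cdot p_{\psi,\nu}([um])$ because $T_\psi$ acts trivially on the target.

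For uniqueness the key observation is that the $T_\psi$-action on $Z_\nu$ contracts onto $Z_\nu^{T_\psi}$. Concretely, for any $z=[um]\in\Fl_{P^{\sc}_\psi,\nu}$ the orbit morphism $f_z:\B{G}_m\to Z_\nu$ given by $s\mapsto[\check{\psi}(s)\,u\,\check{\psi}(s)^{-1}\,m]$ extends to a morphism $\wt f_z:\B{A}^1\to Z_\nu$ with $\wt f_z(0)=[m]$, because every root $\al$ of $\Lie U_\psi$ satisfies $\langle\al,\check{\psi}\rangle>0$, so conjugation by $\check{\psi}(s)$ scales each root subspace of $L(U_\psi^{\sc})$ by a positive power of $s$ and thus contracts it to $1$ as $s\to 0$. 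Given any $T_\psi$-equivariant retract $p'_\nu:Z_\nu\to Z_\nu^{T_\psi}$, the composition $p'_\nu\circ\wt f_z:\B{A}^1\to Z_\nu^{T_\psi}$ is $T_\psi$-equivariant with trivial target action, hence constant, so
\[
p'_\nu(z)=p'_\nu(\wt f_z(1))=p'_\nu(\wt f_z(0))=p'_\nu([m])=[m],
\]
the last step by the retract property. This forces $p'_\nu$ to coincide with the $p_\nu$ built in the previous paragraph.

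The main obstacle I expect is making the $\B{A}^1$-extension rigorous in the ind-scheme setting: $Z_\nu\subset\Fl$ is only an ind-subscheme, and one needs to confine the orbit closure of $z$ inside a finite-type $T_\psi$-stable closed subscheme of $\Fl$ before quoting the standard fact that a $\B{G}_m$-action on a projective scheme with positive weights on a pro-unipotent group contracts to the fixed locus. I would do this by choosing $w\in\wt{W}$ with $z\in \Fl^{\leq w}$ and noting that $\Fl^{\leq w}$ is a projective scheme of finite type and $T$-stable, so both the orbit of $z$ and its limit lie there; the existence of the extension $\wt f_z$ then reduces to the classical contraction statement applied to a finite-dimensional piece of $L(U_\psi^{\sc})$ via the affine Bruhat decomposition as in \re{notation}(c)--(d).
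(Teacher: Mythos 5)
Your proof is correct and follows essentially the same route as the paper: both identify $p_{\psi,\nu}$ as the retract via the isomorphisms of \re{strata}(d)--(f), and both establish uniqueness by extending the $\B{G}_m$-orbit $s\mapsto[\check\psi(s)u\check\psi(s)^{-1}m]$ to $\B{A}^1$ and observing that any $T_\psi$-equivariant retract is forced to send a point to the limit of its orbit at $s=0$. The paper's one-line uniqueness argument works with $S$-points $\eta:S\to\Fl_{P^{\sc}_\psi,\nu}$ and states the extension $\eta_{\B{A}^1}$ directly (which handles the ind-scheme finiteness issue uniformly and gives a morphism of schemes for free), whereas you argue pointwise and then note the need to confine the orbit inside a $T$-stable finite-type piece such as $\Fl^{\leq w}$; both are adequate, and your remark about where the positivity of $\langle\al,\check\psi\rangle$ enters usefully fills in a detail the paper leaves implicit.
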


\begin{proof}
To see the existence of a retraction and its relation to $p_{\psi,\nu}$, we note that $\iota_{\nu}$ induces an isomorphism $\Fl_{M^{\sc}_{\psi},\nu}=\Fl^{T_{\psi}}_{P^{\sc}_{\psi},\nu}\isom Z^{T_{\psi}}_{\nu}$, and that
$p_{\psi,\nu}:\Fl_{P^{\sc}_{\psi},\nu}\to\Fl_{M^{\sc}_{\psi},\nu}$ is a $T^{\psi}$-equivariant retract. To see the uniqueness of the retract, we note that  for every $S$-point $\eta:S\to \Fl_{P^{\sc}_{\psi},\nu}$, the morphism
$\eta_{\B{G}_m}: \B{G}_m\times S\to \Fl_{P^{\sc}_{\psi},\nu}$, defined by $(a,x)\mapsto \psi(a)\nu(s)$ extends uniquely to the morphism $\eta_{\B{A}^1}:\B{A}^1\times S\to \Fl_{P^{\sc}_{\psi},\nu}$, and we have an equality $p_{\psi,\nu}(\eta)=\eta_{\B{A}^1}|_{\{0\}\times S}$.
\end{proof}

\section{Affine Springer fibers}

\subsection{Geometric properties} Assume that we are in the situation Section~\re{redgroups}.

\begin{Emp} \label{E:affsprfib}
{\bf Set-up.} (a) Let $\gm\in
G(K)$ be a compact regular semi-simple element, let $G^{\sc}_{\gm}\subseteq G^{\sc}$ be
the centralizer of $\gm$, let $S_{\gm}\subseteq G^{\sc}_{\gm}$ be the maximal split
torus of $G^{\sc}_{\gm}$ (over $K$), and let $\La_{\gm}:=X_*(S_{\gm})$ be the group of cocharacters.
The map $\mu\mapsto \mu(t)$ identifies $\La_{\gm}$ with a subgroup of $S_{\gm}(K)$.

\smallskip

(b) Let $\Fl_{\gm}\subseteq \Fl$ be the affine Springer fiber. Explicitly, $\Fl_{\gm}$ consists of cosets $gI^{\sc}\in L(G^{\sc})/I^{\sc}$ such that $g^{-1}\gm g\in I$. Then the group $\La_{\gm}$ acts on $\Fl_{\gm}$. Moreover, it is known that there exists a closed subscheme of finite type $Y\subseteq \Fl_{\gm}$ such that $\Fl_{\gm}=\La_{\gm}(Y)$.

\smallskip

(c) For every ind-subscheme $Z\subseteq\Fl$, we set $Z_{\gm}:=Z\cap\Fl_{\gm}$.

\smallskip

(d) {\bf Main assumption:} We always assume that we have an inclusion $S_{\gm}\subseteq T_{G^{\sc}}$, hence an inclusion $\La_{\gm}\subseteq\La=X_*(T_{G^{\sc}})$.

\end{Emp}

\begin{Emp} \label{E:remcent}
{\bf Remarks.} Suppose that we are in the situation of Section~\re{affsprfib}(a).

\smallskip

(a) By \cite{St}, the centralizer $G^{\sc}_{\gm}\subseteq G^{\sc}$ is connected, thus is a maximal torus.

\smallskip

(b) The centralizer $G^{\sc}_{S_{\gm}}\subseteq G^{\sc}$ is a Levi subgroup, and $S_{\gm}$ is the connected center of $G^{\sc}_{S_{\gm}}$. Indeed, the centralizer $G^{\sc}_{S_{\gm}}$ is split over $K$ because $G^{\sc}$ and $S_{\gm}$ are split over $K$, therefore the connected center $Z(G^{\sc}_{S_{\gm}})^0$ of $G^{\sc}_{S_{\gm}}$ is split over $K$ as well. Moreover, since $G^{\sc}_{\gm}$ is a maximal torus of $G^{\sc}$, it is a maximal torus of $G^{\sc}_{S_{\gm}}$, hence contains  $Z(G^{\sc}_{S_{\gm}})^0$. Therefore the assertion follows from the assumption that $S_{\gm}\subseteq G^{\sc}_{\gm}$ is the maximal split torus.
\end{Emp}

\begin{Emp} \label{E:remgood}
{\bf Observations.} Fix $\psi\in\Psi$.

\smallskip

(a) Using Section~\re{redgp}(d), an inclusion  $(G_{\gm})^0\subseteq M_{\psi}$ is equivalent to the inclusion
$G^{\sc}_{\gm}\subseteq (M_{\psi})_{G^{\sc}}$, thus to an inclusion $T_{\psi}\subseteq G^{\sc}_{\gm}$,
hence to an inclusion $T_{\psi}\subseteq S_{\gm}$.

\smallskip

(b) Set $(\La_{\gm})_{\B{Q}}:=\La_{\gm}\otimes_{\B{Z}}\B{Q}$. By Section~\re{redgp}(c), an inclusion $T_{\psi}\subseteq S_{\gm}$ holds if and only if  $\check{\psi}\in(\La_{\gm})_{\B{Q}}$.

\smallskip

(c) It follows from parts~(a) and (b), that if $\check{\psi}\in(\La_{\gm})_{\B{Q}}$, then
element $\gm$ belongs to $(G_{\gm})^0(K)\subseteq M_{\psi}(K)\subseteq P_{\psi}(K)$.

\smallskip

(d) It follows from Section~\re{remcent}(b) that if $\check{\psi}\notin(\La_{\gm})_{\B{Q}}$,
then there exists a root $\al\in\Phi$ such that $\al\in (\La_{\gm})^{\perp}$, but $\lan\al,\check{\psi}\ran\neq 0$.

\end{Emp}

\begin{Not} \label{E:goodpos}
Assume that $\psi\in\Psi$ satisfies $\check{\psi}\in(\La_{\gm})_{\B{Q}}$.

\smallskip

(a) By Section~\re{remgood}(c), we have $\gm\in M_{\psi}(K)\subseteq P_{\psi}(K)$, thus we can consider the affine Springer fibers $\Fl_{P^{\sc}_{\psi},\nu,\gm}\subseteq \Fl_{P^{\sc}_{\psi},\nu}$ and  $\Fl_{M^{\sc}_{\psi},\gm}\subseteq \Fl_{M^{\sc}_{\psi},\nu}$.
Explicitly, $\Fl_{P^{\sc}_{\psi},\nu,\gm}$ (resp. $\Fl_{M^{\sc}_{\psi},\nu,\gm}$) consists of all elements $gI_{P^{\sc}_{\psi},\nu}\in L(P^{\sc}_{\psi})/I_{P^{\sc}_{\psi},\nu}$ (resp. $gI_{M^{\sc}_{\psi}}\in L(M^{\sc}_{\psi})/I_{M^{\sc}_{\psi}}$) such that
$g^{-1}\gm g\in I_{P_{\psi},\nu}$ (resp. $g^{-1}\gm g\in I_{M_{\psi}}$).

\smallskip

(b) By construction, the isomorphism  $\iota_{\nu}:\Fl_{P^{\sc}_{\psi},\nu}\isom Z_{\nu}$ from Section~\re{strata}(e) restricts to isomorphisms
$\iota_{\nu,\gm}:\Fl_{P^{\sc}_{\psi},\nu,\gm}\isom Z_{\nu,\gm}$ and  $\iota^{T_{\psi}}_{\nu,\gm}:\Fl_{M^{\sc}_{\psi}}\isom Z^{T_{\psi}}_{\nu,\gm}$.
\end{Not}





\begin{Emp} \label{E:affine bundle}
{\bf Affine bundle.} A morphism $f:X\to Y$ of schemes is called an {\em
affine bundle} if locally \'etale on $Y$ it is isomorphic to the
projection $Y\times \B{A}^n\to Y$ and all transition maps are
affine.
\end{Emp}

\begin{Prop} \label{P:strata}
Assume that we are in the situation of Section~\re{goodpos}. Then for every  $\nu\in\wt{W}_{\psi}$ the
$T_{\psi}$-equivariant retraction $p_{\nu}:Z_{\nu}\to Z_{\nu}^{T_{\psi}}$ of \rl{strata}
induces a retraction $p_{\nu,\gm}:Z_{\nu,\gm}\to Z_{\nu,\gm}^{T_{\psi}}$. Furthermore, $p_{\nu,\gm}$ is a
composition of affine bundles.

\end{Prop}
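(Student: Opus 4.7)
The plan has three steps: show the retract $p_\nu$ restricts to a retract on the Springer fiber, identify this restricted retract under the model isomorphisms, and then analyze its fibers through the descending central series of $U_\psi$.

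For the restriction, I would use the dynamical characterization of $p_\nu$ already exploited in the proof of \rl{strata}: for an $S$-point $\eta:S\to Z_\nu$, the morphism $(a,s)\mapsto\check\psi(a)\eta(s):\B{G}_m\times S\to Z_\nu$ extends uniquely to $\B{A}^1\times S$, and $p_\nu(\eta)$ is its value at $0$. The hypothesis $\check\psi\in\La_\gm$ says that $\check\psi$ factors through $S_\gm\subset G_\gm$, so $T_\psi$ commutes with $\gm$ and preserves the closed ind-subscheme $\Fl_\gm\subset\Fl$. Hence the extension of a morphism landing in $\Fl_\gm$ stays in $\Fl_\gm$, producing the required retract $p_{\nu,\gm}$. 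For the identification, note that $\gm\in M_\psi(K)$ by \re{goodpos}(b), so $\gm$ normalizes $U_\psi$; under the isomorphisms $\iota_{\nu,\gm}$ and $\iota^{T_\psi}_{\nu,\gm}$ the map $p_{\nu,\gm}$ becomes $[um]\mapsto[m]:\Fl_{P^\sc_\psi,\nu,\gm}\to\Fl_{M^\sc_\psi,\gm}$. Using the semidirect decomposition $I^\sc_{\nu,\psi}=I^\sc_{\nu,\psi,U}\cdot I^\sc_{\nu,\psi,M}$ from \re{strata}(c) and rewriting $(um)^{-1}\gm(um)=m^{-1}u^{-1}(\gm u\gm^{-1})m\cdot m^{-1}\gm m$, the fiber over $[m]$ is identified with the set of classes $[u]\in L(U_\psi)/I^\sc_{\nu,\psi,U}$ satisfying $m^{-1}u^{-1}(\gm u\gm^{-1})m\in I^\sc_{\nu,\psi,U}$.

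To control these fibers, consider the descending central series $U_\psi=U^{(1)}\supset U^{(2)}\supset\cdots\supset U^{(N+1)}=\{1\}$ and the intermediate group schemes $P^{(i),\sc}:=U^{(i)}M^\sc_\psi$. This factors $p_{\nu,\gm}$ as a tower of projections whose $i$-th arrow goes from the Springer fiber in $\Fl_{P^{(i),\sc},\nu}$ to the one in $\Fl_{P^{(i+1),\sc},\nu}$. On each abelian quotient $U^{(i)}/U^{(i+1)}\cong\Lie(U^{(i)}/U^{(i+1)})$, the map $u\mapsto u^{-1}(\gm u\gm^{-1})$ reduces to the $K$-linear operator $\on{Ad}(\gm)-1$. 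Since $\gm$ is regular semisimple with $S_\gm\subset T$, the centralizer of $\gm$ in $G$ is contained in $T$, so $\on{Ad}(\gm)-1$ is injective on $\Lie(U_\psi)(K)$ and has finite cokernel on any $\C{O}$-lattice. Consequently the fiber of each intermediate step over $[m]$ is an affine space of dimension equal to the valuation of $\det(\on{Ad}(\gm)-1)$ on the corresponding graded piece, independent of $[m]$.

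The main obstacle is promoting this fiberwise affineness to an honest affine bundle, i.e.\ \'etale-local triviality with affine transition maps. One must track how the conjugated lattice $mI^\sc_{\nu,\psi,U}m^{-1}$ varies with $[m]$ and exhibit the fibration as a torsor under a pro-unipotent group quotient built from the cokernel of $\on{Ad}(\gm)-1$ on the relevant graded piece. Regularity of $\gm$ supplies the needed uniformity, while the real technical work is organizing the induction on the central series compatibly with the Iwahori filtration so that the torsor structure passes to successive steps; once this compatibility is established, standard descent for torsors under pro-unipotent group schemes produces the affine bundle structure.
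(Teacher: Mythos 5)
Your first two paragraphs track the paper's argument very closely: the dynamical-retract argument for why $p_\nu$ preserves $\Fl_\gm$, the identification under $\iota_\nu$, the passage to the (lower) central series of $U_\psi$, and the observation that on each abelian graded piece the relevant map is $\Ad\gm-1$, whose invertibility over $K$ comes from regular semisimplicity of $\gm$. The tower you set up with $P^{(i),\sc}=U^{(i)}M_\psi^{\sc}$ is the same as the paper's tower $P_i^{\sc}=P_\psi^{\sc}/U_i$, just read as subgroups instead of quotients. So the skeleton is right.

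However, you explicitly stop at ``the main obstacle is promoting this fiberwise affineness to an honest affine bundle'' and then gesture at descent for torsors without actually carrying it out; this is a genuine gap, and it is exactly the part where the paper does its real work. The paper's resolution has two ingredients you are missing. First (Step 3 of the paper), instead of working directly with $\Fl_{P^{\sc}_i,\nu,\gm}$, one pulls back along the $I^{\sc}_{\nu,i}$-torsor $L(P^{\sc}_i)\to L(P^{\sc}_i)/I^{\sc}_{\nu,i}$ and replaces the map of Springer fibers by the map of preimages $\wt{\Fl}'_{P^{\sc}_{i+1},\nu,\gm_{i+1}}\to\wt{\Fl}_{P^{\sc}_i,\nu,\gm_i}$; since affine-bundle structure descends along the torsor, one may freely work with a genuine element $g\in L(P^{\sc}_i)$ rather than a coset $[m]$, which removes the ``conjugated lattice $mI^{\sc}_{\nu,\psi,U}m^{-1}$ varies with $[m]$'' problem you flagged. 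Second (Step 4), one writes down an explicit algebraic section $s:P_i\to P_{i+1}$ of the projection, built from a choice of ordering of the roots appearing in $\Lie U_0/\Lie U_i$; the point is that $s$ carries $I_{\nu,i}$ into $I_{\nu,i+1}$, which (Steps 5--6) turns the Springer condition for the pair $(g,u)$ into the concrete affine-linear equation $(\Ad\wt m-1)(n)\in n_g+\Lie\ov{I}_{\nu,i+1}$ in $\Lie L(\ov U_i)/\Lie\ov{I}_{\nu,i+1}$, with $n_g$ depending algebraically on $g$. The uniform-rank statement from $\val\det(\Ad\gm-1)$ then directly exhibits the solution set as an affine subbundle of a trivial bundle over $\wt{\Fl}_{P^{\sc}_i,\nu,\gm_i}$; no general descent theorem for pro-unipotent torsors is invoked or needed. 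Without the pullback to the loop group and the explicit section, your argument does establish that the fibers are affine spaces of the right dimension, but does not establish local triviality, which is the actual content of ``affine bundle.''
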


\begin{proof} To make the argument more structural, we will divide it into steps.

\smallskip

\noindent{\bf Step 1.} By \rl{strata} and the observations of Section~\re{goodpos} it suffices to show that the projection
$p_{\psi,\nu}:\Fl_{P^{\sc}_{\psi},\nu}\to\Fl_{M^{\sc}_{\psi}}$ restricts to the projection
\[
p_{\psi,\nu,\gm}:\Fl_{P^{\sc}_{\psi},\nu,\gm}\to\Fl_{M^{\sc}_{\psi},\gm},
\]
and that $p_{\psi,\nu,\gm}$ is a composition of affine bundles.

\smallskip

\noindent{\bf Step 2.} Let $U_{\psi}=U_0\supseteq U_1\supseteq \ldots \supseteq U_{n-1}\supseteq
U_n=\{1\}$ be the lower central series of $U_{\psi}$. Then each
$U_i$ is a normal subgroup of $P_{\psi}$, and we set
$P_i:=P_{\psi}/U_i$ and $P^{\sc}_i:=P^{\sc}_{\psi}/U_i$. In particular, $P_0=M_{\psi}$ and
$P_n=P_{\psi}$.

\smallskip

For every $i=0,\ldots,n$, let $\gm_i\in L(P_i)$ be the image of $\gm\in L(P_{\psi})$, and denote by $I_{P_i,\nu}\subseteq L(P_i)$ (resp. $I_{P^{\sc}_i,\nu}\subseteq L(P^{\sc}_i)$) the image of $I_{P_{\psi},\nu}$ (resp. $I_{P^{\sc}_{\psi},\nu}$). We set $\Fl_{P^{\sc}_i,\nu}:=L(P^{\sc}_i)/I_{P^{\sc}_i,\nu}$, and let $\Fl_{P^{\sc}_{i},\nu,\gm}\subseteq \Fl_{P^{\sc}_i,\nu}$ be the
corresponding affine Springer fiber, that is, the collection of all
$g\in L(P^{\sc}_i)/I_{P^{\sc}_i,\nu}$ such that $g^{-1}\gm_i g\in I_{P_i,\nu}$.

\smallskip

For every $i=0,\ldots,n-1$, we have a natural projection
\[
p_{i,\gm}:\Fl_{P^{\sc}_{i+1},\nu,\gm_{i+1}}\to\Fl_{P^{\sc}_{i},\nu,\gm_i},
\]
and it remains to show that each $p_{i,\gm}$ is an affine bundle.

\smallskip

\noindent{\bf Step 3.} Let $\wt{\Fl}_{P^{\sc}_{i},\nu,\gm_i}\subseteq L(P^{\sc}_i)$ be the preimage of
$\Fl_{P^{\sc}_{i},\nu,\gm_i}\subseteq \Fl_{P^{\sc}_{i},\nu}$ under the natural projection
$L(P^{\sc}_i)\to L(P^{\sc}_i)/I_{P^{\sc}_i,\nu}$, and set
\[
\wt{\Fl}'_{P^{\sc}_{i+1},\nu,\gm_{i+1}}:=\wt{\Fl}_{P^{\sc}_{i},\nu,\gm_i}
\times_{\Fl_{P^{\sc}_{i},\nu,\gm_i}} \Fl_{P^{\sc}_{i+1},\nu,\gm_{i+1}}.
\]
It is enough to show that each projection
$\wt{\Fl}'_{P^{\sc}_{i+1},\nu,\gm_{i+1}}\to\wt{\Fl}_{P^{\sc}_{i},\nu,\gm_{i}}$ is an
affine bundle.

\smallskip

We set $\ov{U}_{i}:=U_{i}/U_{i+1}$. Then $\ov{U}_i\subseteq
P_{i+1}=P_{\psi}/U_{i+1}$ is a normal subgroup, and we have
$P_i\cong P_{i+1}/\ov{U}_i$. Set $I_{\ov{U}_i,\nu}:=I_{P_{i+1},\nu}\cap L(\ov{U}_i)$. Then
$\wt{\Fl}'_{P^{\sc}_{i+1},\nu,\gm_{i+1}}$ can be identified with the locus
of all $g\in L(P^{\sc}_{i+1})/I_{\ov{U}_i,\nu}$ such
that $g^{-1}\gm_{i+1} g\in I_{P_{i+1},\nu}$.

\smallskip

\noindent{\bf Step 4.} Recall that the projection $p_i:P_{i+1}\to P_i$, viewed as a morphism of algebraic varieties,
has a section $s$. Indeed, the isomorphism $P_{\psi}\isom M_{\psi}\times U_{\psi}$ from Section~\re{strata}(c) induces an isomorphism
$P_i\isom M_{\psi}\times (U_0/U_i)$. Choose an ordering of the all roots of $G$ lying in
$\Lie U_0/\Lie U_i$. Then the map $(x_{\al})_{\al}\mapsto\prod_{\al}x_{\al}$ defines an isomorphism
$\prod_{\al} U_{\al}\isom  U_0/U_i$, where $U_{\al}$ is the root
space of $\al$. We define $s$ to be the composition
\[
P_i\isom M_{\psi}\times (U_0/U_i)\isom M_{\psi}\times\prod_{\al}
U_{\al}\hra M_{\psi}\times (U_0/U_{i+1})\isom P_{i+1}.
\]

By construction, we have $s(P^{\sc}_i)\subseteq P^{\sc}_{i+1}$, so using $s$, we identify $\wt{\Fl}'_{P^{\sc}_{i+1},\nu,\gm_{i+1}}$ with
the space of pairs $(g,u)$, where $g\in L(P^{\sc}_i)$
and $u\in L(\ov{U}_{i})/I_{\ov{U}_i,\nu}$, satisfying
\begin{equation} \label{Eq:cond}
(s(g)u)^{-1}\gm_{i+1} (s(g)u)\in I_{P_{i+1},\nu}.
\end{equation}
Moreover, equation \form{cond} implies that $g^{-1}\gm_i g\in I_{P_i,\nu}$, thus $g\in \wt{\Fl}_{P^{\sc}_{i},\nu,\gm_i}\subseteq L(P^{\sc}_i)$.
\smallskip

\noindent{\bf Step 5.} For each $g\in\wt{\Fl}_{P^{\sc}_{i},\nu,\gm_i}$, we set $\wt{g}:=s(g)^{-1}\gm_{i+1}s(g)\in L(P^{\sc}_{i+1})$. Then we have $p_i(\wt{g})=g^{-1}\gm_i g\in I_{P_i,\nu}$, so there exists a unique $u_g\in L(\ov{U}_i)$ such
that $\wt{g}=u^{-1}_g s(g^{-1}\gm_{i}g)$. Then we have an equality
\[
(s(g)u)^{-1}\gm_{i+1}(s(g)u)=u^{-1}\wt{g}u=
u^{-1}(\wt{g}u\wt{g}^{-1})u^{-1}_g s(g^{-1}\gm_{i}g).
\]

\smallskip

Let $\wt{m}\in I_{M_{\psi},\nu}\subseteq L(M_{\psi})$ be the image of
$g^{-1}\gm_i g\in I_{P_i,\nu}$. Since $\ov{U}_{i}$ lies in the center
of $U_0/U_{i+1}$, we have $\wt{g}u\wt{g}^{-1}=\wt{m}u\wt{m}^{-1}$.
Moreover, since $g\in\wt{\Fl}_{P^{\sc}_{i},\nu,\gm_i}$, we get that
$g^{-1}\gm_i g\in I_{P_i,\nu}$. Hence by our construction of $s$ we have $s(g^{-1}\gm_i g)\in I_{P_{i+1},\nu}$, thus
our condition \form{cond} can be rewritten as
\[
u^{-1}(\wt{m}u\wt{m}^{-1})\in u_g I_{\ov{U}_i,\nu}.
\]

\smallskip

\noindent{\bf Step 6.} Since $\ov{U}_i$ is abelian, we have a canonical
isomorphism $\ov{U}_i\isom \Lie\ov{U}_i$. Therefore each $u_g\in L(\ov{U}_i)$ gives rise to an element $n_g\in \Lie L(\ov{U}_i)$,
and $\wt{\Fl}'_{P^{\sc}_{i+1},\nu,\gm_{i+1}}$ is identified with the moduli space of
pairs $(g,n)$, consisting of  $g\in\wt{\Fl}_{P^{\sc}_{i},\nu,\gm_i}$ and $n\in
\Lie L(\ov{U}_{i})/\Lie I_{\ov{U}_i,\nu}$ such that
\begin{equation} \label{Eq:cond'}
(\Ad\wt{m}-1)(n)\in n_g+\Lie I_{\ov{U}_i,\nu}.
\end{equation}

\vskip 4truept

\noindent{\bf Step 7.} Since $\gm\in M_{\psi}(K)\subseteq G(K)$ is regular semisimple, the operator $\Ad\gm-1$
is invertible on $\Lie\ov{U}_{i}(K)$, and we set
$d:=\val\det(\Ad\gm-1,\Lie\ov{U}_{i}(K))$. Since each $\wt{m}$ is
an $M_{\psi}(K)$-conjugate of $\gm$, we conclude that the
valuation of determinant of $\Ad\wt{m}-1$ on $\Lie\ov{U}_{i}(K)$
is $d$, thus the linear transformation of $\Lie L(\ov{U}_{i})/
\Lie{I}_{\ov{U}_i,\nu}$, induced by $\Ad\wt{m}-1$, has a kernel of dimension
$d$. Thus equation \form{cond'} implies that
$\wt{\Fl}'_{P^{\sc}_{i+1},\nu,\gm_{i+1}}$ is an affine subbundle of
$\wt{\Fl}_{P^{\sc}_{i},\nu,\gm_i}\times(\Lie L(\ov{U}_{i})/\Lie {I}_{\ov{U}_i,\nu})$
of dimension $d$.
\end{proof}

\begin{Prop} \label{P:ind}
Let $\ov{w}\in\wt{W}^{\C{C}}$ be an admissible tuple,
$\psi\in\Psi$, $\nu:=\ov{w}_{\psi}\in\wt{W}_{\psi}$, and let
$Z_{\nu}\subseteq \Fl$ as in Section~\re{strata}(a). Then exists $m\in\B{N}$
such that if $\ov{w}$ is $m$-regular, then

\smallskip

(a) the reduced intersections $Z_{\nu}\cap \Fl_{\gm}^{\leq\ov{w}}$ and
$Z_{\nu}\cap\left(\bigcap_{\psi\in C}\Fl_{\gm}^{\leq_{C}w_{C}}\right)$ are equal;

\smallskip

(b) the isomorphism $Z_{\nu}^{T_{\psi}}\isom \Fl_{M^{\sc}_{\psi}}$
from Sections~\re{strata}(e),(f) induces an isomorphism between the reduced intersection $Z^{T_{\psi}}_{\nu}\cap
\Fl_{\gm}^{\leq\ov{w}}$ and $\Fl_{M^{\sc}_{\psi},\gm}^{\leq
\ov{w}^{\psi}}$ (see Section~\ref{N:kreg}(e));

\smallskip

(c) we have an inclusion of sets $p_{\nu}^{-1}(p_{\nu}(\Fl^{\leq\ov{w}}\cap Z_{\nu,\gm}))
\subseteq \Fl^{\leq\ov{w}}$.
\end{Prop}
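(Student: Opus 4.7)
The plan is to prove (a) first and then derive (b) and (c) from it. For (a), the inclusion $\subset$ is immediate. For the converse, I take $z$ on the right-hand side and form the admissible tuple $\ov{u}=\ov{u}(z)$ of \rco{adm}(d). Since $z\in Z_\nu$, one has $u_C\in\wt{W}^\psi\nu$ for every $C\owns\psi$, so $u_C$ and $w_C$ lie in the same $\wt{W}^\psi$-coset $\wt{W}^\psi\nu$. The hypothesis $z\in\Fl^{\leq_Cw_C}$ combined with \rp{semiinf} gives $u_C\leq_Cw_C$ for $C\owns\psi$, and \rl{order1}(a) upgrades this to $u_C\leq_{C^\psi}w_C$. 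An application of \rl{order2} (with $\mu=1$) then yields $u_C\leq_Cw_C$ for every $C\in\C{C}$, whence $z\in\Fl^{\leq\ov{w}}$.

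The principal obstacle is the uniform choice of the threshold $m$ from \rl{order2}: that lemma furnishes a threshold depending on its fixed admissible input, whereas $\ov{u}(z)$ varies with $z$. I plan to handle this as follows. The proof of \rl{order2} shows that its threshold is controlled by the translation-invariant ``admissibility-defect'' tuple $\ov{\mu}\in\La^{\C{C}}$ attached to $\ov{u}$, hence it is insensitive to left $\La$-translation of $\ov{u}$. The Springer-fiber condition $z\in\Fl_\gm$, combined with the bounds $u_C\leq_Cw_C$ for $C\owns\psi$, should confine the tuples $\ov{u}(z)$ to finitely many $\La$-translation classes --- via the $\La_\gm$-periodic finite-type decomposition $\Fl_\gm=\La_\gm\cdot Y$ of \re{affsprfib}(c) --- so that taking $m$ to be the supremum of the \rl{order2}-thresholds over a transversal of these classes supplies a uniform $m$.

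For (b), the isomorphism $Z^{T_\psi}_\nu\isom\Fl_{M^{\sc}_\psi}$ of \re{strata}(e),(f) sends $m\nu$ to $[m]$; the Levi decomposition $U_C=U_\psi\rtimes U_{M_\psi,C^\psi}$ (valid for $C\owns\psi$ since $\Phi_C\subset\Phi(\psi)$) together with the factorization $w_C=(w_C)^\psi\nu$ identifies $Z^{T_\psi}_\nu\cap\Fl^{\leq_Cw_C}$ with $\Fl^{\leq_{C^\psi}(w_C)^\psi}_{M^{\sc}_\psi}$. Intersecting over $C\owns\psi$ and using $(w_C)^\psi=(\ov{w}^\psi)_{C^\psi}$ yields $\Fl^{\leq\ov{w}^\psi}_{M^{\sc}_\psi}$, and combining with (a) and $\gm\in M_\psi(K)$ (see \re{goodpos}(b)) produces (b).

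For (c), \rl{strata} identifies the fibers of $p_\nu$ with $L(U_\psi)$-orbits in $Z_\nu$. Given $z\in\Fl^{\leq\ov{w}}\cap Z_{\nu,\gm}$ and $z'\in p_\nu^{-1}(p_\nu(z))$, we have $z'\in L(U_\psi)z\subset Z_\nu$; for $C\owns\psi$ the inclusion $U_\psi\subset U_C$ (any $\al$ with $\langle\al,\check{\psi}\rangle>0$ lies in $\Phi_C$ because $\check{\psi}\in\ov{C}$) and the $U_C(K)$-invariance of $\Fl^{\leq_Cw_C}$ give $u_C(z')=u_C(z)\leq_Cw_C$. Reapplying the combinatorial machinery of (a) to the admissible tuple $\ov{u}(z')$ --- whose $\psi$-component remains $\nu$ by $L(U_\psi)$-invariance of the $\wt{W}^\psi$-coset --- and extending the uniformity argument so as to accommodate the new class of tuples $\ov{u}(z')$ (bounded through their common origin with $\ov{u}(z)$) finally delivers $z'\in\Fl^{\leq\ov{w}}$.
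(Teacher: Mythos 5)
Your proposal is correct and follows essentially the same route as the paper: reduce via \rco{adm}(d) to a combinatorial statement about the admissible tuple $\ov{u}(z)$, pass from $\leq_C$ to $\leq_{C^\psi}$ via \rl{order1}(a), invoke \rl{order2}, and achieve uniformity of the threshold through the $\La_{\gm}$-periodic finite-type structure $\Fl_{\gm}=\La_{\gm}(Y)$ of \re{affsprfib}(c). Two small stylistic remarks: the $\mu$-quantifier built into \rl{order2} was designed precisely to absorb the $\La_{\gm}$-translate --- the paper applies the lemma with a fixed tuple $\ov{u}_j$ (from a finite stratification of $Y$) and the varying $\mu$, which renders your detour (apply with $\mu=1$, then separately argue translation-invariance of the threshold) unnecessary, though it is logically equivalent; and for (c) there is no need to ``reapply the combinatorial machinery'' --- once one has $z'\in Z_{\nu,\gm}$ and $z'\in\Fl^{\leq_Cw_C}$ for all $C\owns\psi$ (from $U_\psi\subset U_C$ and $U_C(K)$-invariance), part (a) applies verbatim to give $z'\in\Fl^{\leq\ov{w}}$.
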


\begin{proof}
(a) Let $Y\subseteq \Fl_{\gm}$  be a closed subscheme of finite type
such that $\Fl_{\gm}=\La_{\gm}(Y)$ (see Section~\re{affsprfib}(b)). Then using, for
example, \rco{adm}(d),(e), there exists a finite stratification
$Y=\bigcup_{j} Y_{j}$ such that for every $j$ there exists a tuple
$\ov{u}\in\wt{W}^{\C{C}}$ such that $Y_j\subseteq L(U_C)u_C$ for
each $C\in\C{C}$.

Thus it is enough to show that $Z_{\nu}\cap
(\La_{\gm}(Y_j))^{\leq\ov{w}}$ equals
$Z_{\nu}\cap\left(\bigcap_{C\owns \psi}\La_{\gm}(Y_j)^{\leq_C w_C}\right)$ for
each $j$. In other words, we have to show that for every
$\mu\in\La_{\gm}\subseteq\La$ and $y\in Y_j$ such that $\mu(y)\in \Fl^{\leq_{C}
w_C}$ for every $C\owns \psi$ and $\mu(y)\in Z_{\nu}$, we have
$\mu(y)\in \Fl^{\leq_{C} w_C}$ for every $C\in\C{C}$.

Let $\ov{u}$ be a tuple of elements of $\wt{W}$ such that $y\in
L(U_C)u_C$ for every $C$. Then $y\in \Fl^{\leq\ov{u}}$ and it
follows from \rco{adm}(d) that $\ov{u}$ is admissible. By the
assumption, for every $C\owns\psi$ we have $\mu u_C\leq_C w_C$ and
also $(\mu u)_{\psi}=\nu=w_{\psi}$.

By \rl{order1}(a), this implies that $\mu u_C\leq_{C^{\psi}} w_C$ for every $C\owns\psi$.
Hence it follows from \rl{order2} that if $\ov{w}$ is sufficiently regular, then
 $\mu u_C\leq_{C} w_C$ for every $C\in\C{C}$, thus $\mu(y)\in \bigcap_{C\in\C{C}}\Fl^{\leq_{C} w_C}$, as claimed.

\smallskip

(b) By part~(a), the reduced intersections $Z^{T_{\psi}}_{\nu}\cap
\Fl_{\gm}^{\leq\ov{w}}$ and $Z^{T_{\psi}}_{\nu}\cap
\left(\bigcap_{C\owns\psi}\Fl_{\gm}^{\leq_{C}w_{C}}\right)$ are equal. Therefore it
suffices to show that the isomorphism
$Z_{\nu}^{T_{\psi}}\isom \Fl_{M^{\sc}_{\psi}}$ induces an isomorphism between the reduced intersection
$Z^{T_{\psi}}_{\nu}\cap \Fl^{\leq_{C}w_{C}}$ and
$\Fl^{\leq_{C^{\psi}}w_{C^{\psi}}}_{M^{\sc}_{\psi}}$ for all $C\owns\psi$. Since
$\Fl^{\leq_{C} w_C}$ is a closed $L(U_C)$-invariant ind-subscheme
of $\Fl$, we conclude that $Z^{T_{\psi}}_{\nu}\cap
\Fl^{\leq_{C}w_{C}}$ corresponds to a closed $L(U_C)\cap
L(M^{\sc}_{\psi})=L(U_{C^{\psi}})$-invariant ind-subscheme of
$\Fl_{M^{\sc}_{\psi}}$. Using \rp{semiinf}, the question is
equivalent to the assertion that if $w'\in \wt{W}^{\psi}$ then
$w'w_{\psi}\leq_C w^{\psi}w_{\psi}$ if and only if
$w'\leq_{C^{\psi}} w_{\psi}$. But this was shown in \rl{order1}(b).

\smallskip

(c) By part~(a), it is enough to show that
 $p_{\nu}^{-1}(p_{\nu}(\Fl^{\leq_C w_C}\cap
Z_{\nu,\gm}))\subseteq \Fl^{\leq_C w_C}$ for each $C\owns\psi$. Since
every fiber of $p_{\nu}$ lies in a single $L(U_{\psi})$-orbit, the
assertion follows from the inclusion $U_{\psi}\subseteq U_C$.
\end{proof}



\subsection{Finiteness of homology}

\begin{Emp} \label{E:homology}
{\bf Homology.} We fix a prime number $\ell$ different from the characteristic of $k$.

\smallskip

(a) For a scheme $Y$ of finite type over $k$ and $\C{F}\in D_c^b(Y,\qlbar)$
one can form the homology groups $H_i(Y,\C{F}):=(H^i(Y,\C{F}))^*$. We also set $H_i(Y):=H_i(Y,\qlbar)$.

\smallskip

(b) A closed embedding $\iota:X\hra Y$ induces a morphism
\[
\iota^*:H^i(Y,\C{F})\to H^i(Y,\iota_*\iota^*\C{F})=H^i(X,\iota^*\C{F})=H^i(X,\C{F}|_X),
\]
hence a morphism $\iota_*:H_i(X,\C{F}|_X)\to H_i(Y,\C{F})$.

\smallskip

(c) By part~(b), a closed embedding  $\iota:X\hra Y$ induces a morphism $\iota_*:H_i(X)\to H_i(Y)$.
Therefore for every ind-scheme $Y=\colim_i Y_i$ over $k$ one can form a homology
$H_i(Y):=\colim_i  H_i(Y_i)$.
\end{Emp}

The main goal of this section is to show the following finiteness
property of homology of affine Springer fibers:

\begin{Prop} \label{P:bound}
In the situation of Section~\re{affsprfib}(d), there exists an integer $r$ such
that for every  tuple $\ov{x}\in\B{Z}^{\Psi}$ and every $\psi\in
\Psi$, we have an equality of kernels
\begin{equation} \label{Eq:ker}
\Ker\left(H_i(\Fl^{\leq'\ov{x}}_{\gm})\to
H_i(\Fl^{\leq'\ov{x}+r\ov{e}_{\psi}}_{\gm})\right)=\Ker
\left(H_i(\Fl^{\leq'\ov{x}}_{\gm})\to
H_i(\Fl^{\leq'\ov{x}+(r+1)\ov{e}_{\psi}}_{\gm})\right).
\end{equation}
\end{Prop}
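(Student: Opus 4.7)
The plan is to combine the $T_\psi$-equivariant stratification of $\Fl$ by the semi-infinite $P_\psi^{\sc}(K)$-orbits $Z_\nu$ (indexed by $\nu\in\wt{W}_\psi$) with the retract/affine-bundle structure of \rp{strata} and the compatibility properties in \rp{ind}. Specifically, the plan is to interpret the ascending chain
\[
\Fl^{\leq'\ov{x}}_{\gm}\subset\Fl^{\leq'\ov{x}+\ov{e}_{\psi}}_{\gm}\subset
\Fl^{\leq'\ov{x}+2\ov{e}_{\psi}}_{\gm}\subset\cdots
\]
stratum-by-stratum via $Z_\nu$, so that the associated graded pieces become (finite unions of) affine Springer fibers in the Levi $M_\psi^{\sc}$.

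First, I would use \rp{strata}, which says $p_{\nu,\gm}\colon Z_{\nu,\gm}\to Z_{\nu,\gm}^{T_{\psi}}$ is a composition of affine bundles, to deduce that on each stratum the inclusion of $T_\psi$-fixed points induces isomorphisms on (Borel--Moore) homology. Combined with \rp{ind}(c), which ensures $p_\nu$ preserves each truncation $\Fl^{\leq'\ov{x}+s\ov{e}_{\psi}}_{\gm}$, this gives a Cousin-type spectral sequence computing $H_i(\Fl^{\leq'\ov{x}+s\ov{e}_{\psi}}_{\gm})$ out of
\[
H_{*}\!\left(Z_{\nu,\gm}^{T_{\psi}}\cap\Fl^{\leq'\ov{x}+s\ov{e}_{\psi}}_{\gm}\right)
\cong H_{*}\!\left(\Fl^{\leq(\ov{x}+s\ov{e}_{\psi}\cdot\ov{w}_{\on{st}})^{\psi}}_{M_{\psi}^{\sc},\gm}\right)
\]
where the second identification is \rp{ind}(b). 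The crucial point is that the only effect of incrementing $s$ by $1$ on this identification is combinatorial: a few new $\nu$'s enter the index set, and each existing fixed-point piece grows by a central $\psi$-shift (since $\psi$ is central in $M_\psi^{\sc}$).

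Second, I would deduce stabilization of the kernel $K_s:=\Ker(H_i(\Fl^{\leq'\ov{x}}_\gm)\to H_i(\Fl^{\leq'\ov{x}+s\ov{e}_{\psi}}_\gm))$ for each fixed $(\ov{x},\psi)$: since $\Fl^{\leq'\ov{x}}_\gm$ is of finite type (\rco{adm}(c)) its homology is finite-dimensional, and the $K_s$ form an ascending chain of subspaces of it, so stabilize at some $r=r(\ov{x},\psi)$. The issue is that we need $r$ uniform in $(\ov{x},\psi)$.

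To obtain the uniform bound --- which I expect to be the main obstacle --- I would exploit the $\La_\gm$-action: translation by $\mu\in\La_\gm$ carries $\Fl^{\leq'\ov{x}}_\gm$ isomorphically onto $\Fl^{\leq'\mu\cdot\ov{x}}_\gm$, and $\Fl_\gm=\La_\gm(Y)$ for a finite-type $Y$. Whenever the image of $\La_\gm$ in $\B{Z}^\Psi$ has finite index (the split case for $\gm$) the pairs $(\ov{x},\psi)$ fall into finitely many $\La_\gm$-orbits, producing a uniform $r$. The genuinely delicate point is when $\La_\gm$ has smaller rank (so the orbit set on $\La^\Psi$ is infinite): here I would pass to the Levi $M_\psi^{\sc}$ via the $T_\psi$-fixed-point identification above and run induction on the semi-simple rank of $G^{\sc}$. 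In the Levi, $\psi$ becomes central, so the $\ov{e}_\psi$-shift acts as a free central translation on $\Fl_{M_\psi^{\sc},\gm}$, and the kernel stabilization in the Levi amounts to a single statement about one translation operator on the homology of a finite-type affine Springer fiber, giving the required uniform $r$ after combining all Levis and restoring the spectral-sequence bookkeeping.
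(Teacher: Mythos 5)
Your proposal takes a genuinely different route from the paper, and it has substantial gaps that prevent it from constituting a proof.

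The paper's argument is short and purely algebraic, built entirely on the Rees-module machinery of Section~3.2. One packages all the homology groups $H_i(\Fl^{\leq'\ov{x}}_{\gm})$, over all tuples $\ov{x}$ at once, into a single $\B{Z}^{\Psi}_{\geq 0}$-graded module $R(H_i(\Fl_{\gm}))=\bigoplus_{\ov{x}}H_i(\Fl^{\leq'\ov{x}}_{\gm})$ over the Rees algebra $R(\qlbar[\La_{\gm}])$. By Gordan's lemma the Rees algebra is Noetherian (\re{ex}(c)), and by \rco{Rees} the Rees module is finitely generated. The inclusions $\Fl^{\leq'\ov{x}}_{\gm}\hra\Fl^{\leq'\ov{x}+r\ov{e}_{\psi}}_{\gm}$, taken simultaneously over all $\ov{x}$, assemble into a single module endomorphism $\iota_{r\ov{e}_{\psi}}$, and the kernels $\Ker\iota_{r\ov{e}_{\psi}}$ form an ascending chain of submodules, which stabilizes. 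The uniformity of $r$ in $\ov{x}$ is automatic, because the kernel at stage $r$ is \emph{one} submodule (containing all the per-$\ov{x}$ kernels as its graded pieces), and a single Noetherian ascending chain stabilizes. This is precisely the subtlety your write-up identifies as ``the main obstacle,'' and the paper resolves it by changing the category in which the stabilization is asked, not by fighting the combinatorics.

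Your stratum-by-stratum plan has several concrete gaps. First, \rp{ind}(a)--(c) and the Levi identification it provides require the tuple to be $m$-regular for a sufficiently large $m$, but the proposition must hold for \emph{all} $\ov{x}$, including non-regular ones; so the ``Cousin-type spectral sequence'' built on \rp{ind} cannot uniformly compute $H_i(\Fl^{\leq'\ov{x}+s\ov{e}_{\psi}}_{\gm})$ for all $\ov{x}$. Second, even where it applies, the kernel of a filtered map is not determined by the kernels of the associated graded pieces (extensions can intervene), so stabilization stratum by stratum does not immediately give stabilization of the total kernel. Third, your reduction in the ``non-split'' case is not well-founded: passing to $M_{\psi}^{\sc}$ via the $T_{\psi}$-fixed points requires $\check{\psi}\in\La_{\gm}$ (see \re{goodpos}(b)), and the case $\check{\psi}\notin\La_{\gm}$ is left unaddressed; moreover $\Fl_{M_{\psi}^{\sc},\gm}$ is an ind-scheme of infinite type, so the envisioned ``single statement about one translation operator on the homology of a finite-type affine Springer fiber'' is not the correct object, and transferring a stabilization statement from the Levi back to $G^{\sc}$ via the hinted spectral-sequence bookkeeping is exactly the unfilled gap. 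The geometric tools you invoke (\rp{strata}, \rp{ind}, Levi reduction) are used in the paper, but in the proof of the \emph{main} injectivity theorem (\rt{main}), not in \rp{bound}; for \rp{bound} the paper deliberately avoids geometry and leans on the Noetherian ascending-chain condition to get the uniform bound for free.
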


In order to prove this we need to introduce certain notation,
generalizing \cite[Sections~A.4.2 and 3.1.2]{BV}.

\begin{Emp} \label{E:filt}
{\bf Filtrations.} Let $\Gm$ be an ordered monoid, that is, a
monoid and a partially ordered set such that
$\si\tau\leq\si'\tau'$ for each $\si\leq\si'$ and $\tau\leq\tau'$.

\smallskip

(a) By a {\em $\Gm$-filtered set} (or a set with a {\em
$\Gm$-filtration}), we mean a set $X$ together with collections of
subsets $\{X_{\si}\}_{\si\in\Gm}$ such that $X_{\si}\subseteq
X_{\tau}$ for all $\si\leq\tau$, and $X=\bigcup_{\si} X_{\si}$.

\smallskip

(b) By a $\Gm$-filtered group we mean a group $A$ with a
$\Gm$-filtration such that $1\in A_1$ and $A_{\si}\cdot
A_{\tau}\subseteq A_{\si\tau}$.

\smallskip

(c) Let $A$ be a $\Gm$-filtered group, and $X$ is a set equipped
with an $A$-action and a $\Gm$-filtration. We say that
$\Gm$-filtration on $X$ is {\em compatible} with a filtration of
$A$, if for every $\si,\tau\in\Gm$ we have
$A_{\si}(X_{\tau})\subseteq X_{\si\tau}$.

\smallskip

(d) In the situation of (c), we will say that the $\Gm$-filtration
on $X$ is {\em finitely generated over $A$}, if there exists a
finite subset $\Gm_0\subseteq\Gm$ such that
$\{X_{\si}\}_{\si\in\Gm}$ is {\em generated by
$\{X_{\si}\}_{\si\in\Gm_0}$}, that is, for every $\si\in\Gm$, we
have
$X_{\si}=\bigcup_{\{(\tau,\si)\in\Gm\times\Gm_0\,|\,\tau\si'=\si\}}
A_{\tau}(X_{\si'})$.
\end{Emp}

\begin{Emp} \label{E:rees}
{\bf Rees algebras and modules.} Let $L$ be a field, and assume
that we are in the situation of Section~\re{filt}.

\smallskip

(a) For a $\Gm$-filtered group $A$, the group algebra $L[A]$
is also equipped with a $\Gm$-filtration
$L[A]_{\si}:=\Span_L(A_{\si})$, and we denote by
$R(L[A]):=\bigoplus_{\si\in\Gm} L[A]_{\si}$ the corresponding Rees algebra. Note that
$R(L[A])$ is the monoid algebra of the monoid
$R(A):=\{(a,\si)\in A\times\Gm\,|\, a\in A_{\si}\}$.

\smallskip

(b) Let $X$ be scheme locally of finite type over $k$ equipped
with an action of $A$. Assume that $A$ is a $\Gm$-filtered group,
and that  $X$ is equipped with a $\Gm$-filtration compatible with
$\Gm$-filtration on $A$ and such that $X_{\si}\subseteq X$ is a
closed subscheme of finite type over $k$ for each $\si\in\Gm$.

\smallskip

(c) For every $A$-equivariant element $\C{F}\in D_c^b(X,\qlbar)$ we can form a $\Gm$-graded
$R(\qlbar[A])$-module $R(H_i(X,\C{F})):=\bigoplus_{\si}
H_i(X_{\si},\C{F}\,|_{X_{\si}})$ for every $i\in\B{Z}$.
Explicitly, the action of $a\in A_{\tau}$ on $X$ defines a closed
embedding $a:X_{\si}\hra X_{\tau\si}$, hence a homomorphism
$H_i(X_{\si},\C{F}\,|_{X_{\si}})\to
H_i(X_{\tau\si},\C{F}\,|_{X_{\tau\si}})$ (see Section~\re{homology}(b)).

\smallskip

In particular, we form a $\Gm$-graded $R(\qlbar[A])$-module
$R(H_i(X)):=R(H_i(X,\qlbar))$.
\end{Emp}

\begin{Lem} \label{L:good}
In the situation of Section~\re{rees}(b), assume that

\smallskip

$\bullet$ the group $A$ acts on the set of irreducible
components of $X$ with finite stabilizers;

\smallskip

$\bullet$ the filtration $\{X_{\si}\}_{\si}$ is finitely generated over $A$;

\smallskip

$\bullet$ that the Rees algebra $R(\qlbar[A])$ is Noetherian.

\smallskip

\noindent Then for every $A$-equivariant object $\C{F}\in D_c^b(X,\qlbar)$ and $i\in\B{Z}$, the $R(\qlbar[A])$-module $R(H_i(X,\C{F}))$ is finitely
generated.
\end{Lem}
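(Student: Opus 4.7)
Set $B := R(\qlbar[A])$ and $M := R(H_i(X,\C{F}))$. The plan is to exhibit a finite-dimensional subspace $M_0 \subset M$ generating $M$ over $B$, by combining the finitely generated filtration hypothesis with a Mayer--Vietoris analysis of each graded piece and closing the induction via the Noetherian hypothesis on $B$.

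First, I would fix a finite subset $\Gm_0 \subset \Gm$ generating the filtration $\{X_\si\}_\si$ over $A$, and set $M_0 := \bigoplus_{\si' \in \Gm_0} H_i(X_{\si'},\C{F}|_{X_{\si'}})$, a finite-dimensional $\qlbar$-vector space inside $M$ (each summand is finite-dimensional because $X_{\si'}$ is of finite type). The natural $B$-linear map $B \otimes_{\qlbar} M_0 \to M$ sends $(a,\tau) \otimes x$ with $a \in A_\tau$ and $x \in H_i(X_{\si'},\C{F}|_{X_{\si'}})$ to the pushforward $a_* x \in H_i(X_{\tau\si'},\C{F}|_{X_{\tau\si'}})$; the claim reduces to showing that this map is surjective in each graded piece, i.e.\ that for every $\si \in \Gm$ the space $H_i(X_\si,\C{F}|_{X_\si})$ is spanned by pushforwards $a_*(H_i(X_{\si'},\C{F}|_{X_{\si'}}))$ with $\si' \in \Gm_0$, $a \in A_\tau$, and $\tau\si' = \si$.

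For fixed $\si$, the generating property of $\Gm_0$ covers $X_\si$ by closed subschemes $a(X_{\si'})$. Since $X_\si$ has finitely many irreducible components (it is of finite type) and since $A$ acts with finite stabilizers on the set of irreducible components of $X$, this cover can be trimmed --- by selecting one translate for each irreducible component of $X_\si$ --- to a finite closed cover $X_\si = \bigcup_{j=1}^N Y_j$. I would then apply the Mayer--Vietoris spectral sequence for a finite closed cover to express the cokernel of the pushforward map $\bigoplus_j H_i(Y_j,\C{F}|_{Y_j}) \to H_i(X_\si,\C{F}|_{X_\si})$ as a subquotient of the lower-degree homology of the multiple intersections $Y_{j_1} \cap \cdots \cap Y_{j_k}$, which are themselves $A$-equivariant finite-type closed subschemes of $X$ satisfying the hypotheses of the lemma.

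The hard part will be passing from these local cokernel bounds to a global finite-generation statement across all $\si \in \Gm$. The strategy is a Noetherian dévissage: view $Q := M/(B \cdot M_0)$ as an $A$-equivariant $B$-module whose graded pieces are controlled, via the Mayer--Vietoris analysis above, by analogous Rees modules built from the intersection family. This family inherits the structural hypotheses of the lemma but with strictly reduced complexity (smaller dimension, or fewer irreducible components per graded piece), and iterating the argument produces an ascending chain of finitely generated $B$-submodules of $M$ whose union surjects onto $M$. Noetherianity of $B$ forces this chain to stabilize in finitely many steps, eventually yielding $Q = 0$ and hence finite generation of $M$ over $B$.
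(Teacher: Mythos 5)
The paper's proof of this lemma is a one-line citation to \cite[Lem. 3.1.3]{BV}, so there is no argument in the paper itself against which to compare step by step. Judged on its own terms, your proposal has the right opening moves but a genuine gap in the closing argument.

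The setup is reasonable: fixing a finite generating set $\Gm_0$ for the filtration, forming $M_0:=\bigoplus_{\si'\in\Gm_0}H_i(X_{\si'},\C{F}|_{X_{\si'}})$, and recognizing that the issue is that for a finite closed cover the pushforward map $\bigoplus_j H_i(Y_j)\to H_i(X_\si)$ need not be surjective, with the failure controlled by Mayer--Vietoris via lower-degree homology of intersections. But two things go wrong after that. First, the intersections $Y_{j_1}\cap\cdots\cap Y_{j_k}$ that appear live inside a single graded piece $X_\si$; you do not explain how to assemble them, as $\si$ varies, into a $\Gm$-filtered $A$-equivariant scheme that again satisfies the three hypotheses of the lemma (finitely generated filtration, finite stabilizers on components, Noetherian Rees algebra). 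Without that, there is no induction hypothesis to invoke, and "this family inherits the structural hypotheses" is an assertion, not an argument. Second, and more seriously, the final Noetherian step is circular: an ascending chain of finitely generated $B$-submodules of $M$ stabilizes only if $M$ itself satisfies the ascending chain condition, i.e.\ only if $M$ is a Noetherian $B$-module; over a Noetherian ring that is equivalent to $M$ being finitely generated, which is precisely the conclusion you are trying to reach. Noetherianity of $B$ alone does not rule out $B$-modules that are strictly increasing unions of finitely generated submodules (a free module of countably infinite rank already does this).

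A further symptom that the argument is not engaging with the actual structure: the finite-stabilizers hypothesis is never genuinely used. In the "trimming" step, the finiteness of the cover comes entirely from $X_\si$ being of finite type (hence having finitely many irreducible components); the finite-stabilizer condition plays no role there. What that hypothesis should be buying you — together with finite generation of the filtration, which forces the set of $A$-orbits of irreducible components of $X$ to be finite — is a decomposition of $X$ into finitely many $A$-stable pieces, each essentially of the form $A\cdot Z$ with $A_Z$ finite, on which the Rees module can be understood directly and an honest induction (on, say, the number of orbits of components, or on dimension, with a genuine base case) can be run. I would rework the dévissage around that finite orbit decomposition rather than around an open-ended Mayer--Vietoris recursion whose termination is not established.
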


\begin{proof}
The argument is identical to that of \cite[Lemma~3.1.3]{BV}, where the case
of $\Gm=\B{Z}_{\geq 0}$ is treated.
\end{proof}

\begin{Emp} \label{E:ex}
{\bf Example.} (a) Let $\Gm$ be an ordered monoid $\B{Z}_{\geq 0}^{\Psi}$, which we identify with a corresponding submonoid of
the group of quasi-admissible tuples in $\La$ via the correspondence of Section~\re{qadm}(b),(c).

\smallskip

(b) Let $\La'\subseteq\La$ be a subgroup. Consider a $\Gm$-filtration on $\La'$, where for every
$\ov{x}\in\Gm$ we set $\La'_{\ov{x}}:=\La'\cap V^{\leq\ov{x}}$,
where $V^{\leq\ov{x}}$ is defined in Section~\ref{N:kreg}(d). Then
$\{\La'_{\ov{x}}\}_{\ov{x}}$ is a $\Gm$-filtered semigroup.

\smallskip

(c) Note that $R(\La')=\{(\mu,\ov{x})\in \La'\times\B{Z}_{\geq
0}^{\Psi}\,|\,\langle \psi,\mu\rangle\leq x_{\psi}\text{ for every
}\psi\in\Psi\}$. Therefore by Gordan's lemma (see, for example,
\cite[Lemma~3.4, page~154]{Ew}), $R(\La')$ is a finitely generated commutative
monoid. Therefore the Rees algebra $R(\qlbar[\La'])=\qlbar[R(\La')]$ is a finitely generated commutative
algebra over $\qlbar$, hence it is Noetherian.

\smallskip

(d) We apply the construction of part~(b) to $\La':=\La_{\gm}$, and equip the ind-scheme $X=\Fl_{\gm}$ (resp. $X=\Gr_{\gm}$)
with a $\Gm$-filtration $\Fl^{\leq'\ov{x}}_{\gm}$ (resp.
$\Gr^{\leq\ov{x}}_{\gm}$). Then it follows from definitions that
this filtration is compatible with a $\Gm$-filtration on $\La_{\gm}$.
\end{Emp}

\begin{Lem} \label{L:fg}
The $\Gm$-filtrations $\{\Gr^{\leq\ov{x}}_{\gm}\}_{\ov{x}}$ on $\Gr_{\gm}$ and $\{\Fl^{\leq'\ov{x}}_{\gm}\}_{\ov{x}}$ on $\Fl_{\gm}$ are finitely
generated over $\La_{\gm}$.
\end{Lem}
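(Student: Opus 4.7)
The plan is to construct a finite generating set $\Gamma_0$ from the combinatorial ``orbit types'' of points of a finite-type fundamental subscheme for the $\La_\gm$-action on $\Fl_\gm$; the argument for $\Gr_\gm$ is completely parallel.

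For any $y$ in $\Fl$, \rco{adm}(d) assigns an admissible tuple $\ov{u}(y)\in\wt{W}^{\C{C}}$ characterized by $y \in U_C(K)u_C(y)$ for each $C$, and via the identification of \re{qadm}(b) I set $\sigma(y):=\pi(\ov{u}(y))\in\B{Z}^\Psi$. Applying \rp{semiinf} to each Weyl chamber and combining with the admissibility of $\ov{u}(y)$, one checks that $y\in\Fl^{\leq'\ov{x}}$ if and only if $\sigma(y)\leq\ov{x}$ componentwise, so $\sigma(y)$ is the unique minimal tuple in $\B{Z}^\Psi$ with $y \in \Fl^{\leq'\sigma(y)}$; moreover $\sigma(\mu y)=\pi(\mu)+\sigma(y)$ for every $\mu\in\La$.

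By \re{affsprfib}(c) there is a closed subscheme of finite type $Y\subset\Fl_\gm$ with $\Fl_\gm=\La_\gm(Y)$, and by \rco{adm}(c) we have $Y\subset\Fl^{\leq'\ov{y}}$ for some $\ov{y}$; since $\wt{W}\cap\Fl^{\leq'\ov{y}}$ is a finite set of affine Weyl elements, the set $\{\sigma(y):y\in Y\}\subset\B{Z}^\Psi$ is finite. I then replace $Y$ by a finite union $\bigcup_{j=1}^N\nu_j Y$ with $\nu_j\in\La_\gm$ chosen so that the translates collectively exhaust ``every dominant phase'': the enlarged $Y$ is again of finite type, still satisfies $\Fl_\gm=\La_\gm(Y)$, and is arranged to enjoy (i) $\Gamma_0:=\{\sigma(y):y\in Y\}\subset\Gamma=\B{Z}_{\geq 0}^\Psi$, and (ii) every $z\in\Fl_\gm$ admits a representation $z=\mu(y)$ with $y\in Y$ and $\pi(\mu)\geq 0$ componentwise.

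Granting (i) and (ii), the finite generation is straightforward: for $\sigma\in\Gamma$ and $z\in\Fl^{\leq'\sigma}_\gm$, choose such a representation $z=\mu(y)$ with $\pi(\mu)\geq 0$, and set $\sigma':=\sigma(y)\in\Gamma_0$ and $\tau:=\sigma-\sigma'$. The identity $\sigma(z)=\pi(\mu)+\sigma'\leq\sigma$ together with $\pi(\mu)\geq 0$ forces both $\tau\geq 0$ (so $\tau\in\Gamma$) and $\pi(\mu)\leq\tau$ (so $\mu\in\La_{\gm,\tau}$), whence $z\in\La_{\gm,\tau}(\Fl^{\leq'\sigma'}_\gm)$ with $\tau+\sigma'=\sigma$, as required by \re{filt}(d). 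The main obstacle is verifying the enlargement properties (i)--(ii), i.e.\ that finitely many $\La_\gm$-translates of the original $Y$ suffice to guarantee a dominant presentation of every point; this reduces to the observation that $\sigma(\Fl_\gm)\subset\sigma(Y)+\pi(\La_\gm)$ consists of only finitely many cosets of the sublattice $\pi(\La_\gm)\subset\B{Z}^\Psi$ (one for each $\sigma(y)$, $y\in Y$), each of which can be shifted into $\Gamma$ by a single element of $\La_\gm$.
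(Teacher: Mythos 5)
Your reduction is sound up to the ``enlargement'' step, and the formula $\sigma(\mu y)=\pi(\mu)+\sigma(y)$, together with the characterization $y\in\Fl^{\leq'\ov{x}}\iff\sigma(y)\leq\ov{x}$, is correct and essentially the same observation the paper uses. But the proof then hinges on condition (ii) --- that the finite-type scheme $Y$ can be enlarged so every $z\in\Fl_\gm$ admits a presentation $z=\mu(y)$ with $y\in Y$ and $\pi(\mu)\geq 0$ in $\B{Z}^\Psi$ --- and this condition is impossible whenever $\La_\gm\neq 0$. Indeed, for each $\psi\in\Psi$ one also has $-\psi\in\Psi$ (since $\Psi=\bigcup_C\Psi_C$ and $-\Psi_C=\Psi_{-C}$), and for any $\mu\in\La$ one has $\langle\psi,\mu\rangle+\langle-\psi,\mu\rangle=0$. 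Hence $\pi(\mu)\geq 0$ componentwise forces $\langle\psi,\mu\rangle=0$ for every $\psi$, i.e.\ $\mu=0$. So (ii) would say $\Fl_\gm=Y$, which is false once $\La_\gm\neq 0$ (the affine Springer fiber is then not of finite type). The closing ``observation'' is likewise off: translating by $\nu_j\in\La_\gm$ does not move the coset $\sigma(Y_j)+\pi(\La_\gm)$, and a nonzero coset of $\pi(\La_\gm)$ is never contained in $\Gamma=\B{Z}_{\geq 0}^\Psi$, for the same reason (the sublattice $\pi(\La_\gm)$ is ``antisymmetric'' under $\psi\leftrightarrow-\psi$). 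Without (ii) your finite-generation step collapses: from $\sigma(z)\leq\sigma$, $\sigma(y)\geq 0$, and $\sigma\geq 0$ alone you only get $\tau=\sigma-\sigma(y)\geq\pi(\mu)$, and $\pi(\mu)$ can have negative components, so $\tau\in\Gm$ is not guaranteed.

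The reason the argument goes astray is that the relevant condition for $\mu\in(\La_\gm)_\tau$ in \re{filt}(d) is $\pi(\mu)\leq\tau$ (a bounded-\emph{above} cone), not $\pi(\mu)\geq 0$ (dominance); these are essentially opposite cones and the second one degenerates. The paper's proof avoids this entirely. It first reduces to $\Gr_\gm$, decomposes $Y$ into finitely many pieces $Y_j$ on which the ``orbit-type'' tuple is a constant $\ov{y}_j$ (each $Y_j$ inside a single $U_C(K)$-orbit for every $C$), and then uses the identity $\La_\gm(Y_j)^{\leq\ov{x}}=\La_\gm^{\leq\ov{x}-\ov{y}_j}(Y_j)$ --- which is exactly your $\sigma$-computation restricted to a single piece --- to reduce finite generation of the filtration on $\La_\gm(Y_j)$ to finite generation of the shifted filtration $\{(\La_\gm)_{\ov{x}-\ov{y}_j}\}_{\ov{x}}$ on $\La_\gm$ itself. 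That last step is then deduced from Gordan's lemma, i.e.\ from the already-established fact (\re{ex}(c)) that the Rees algebra $R(\qlbar[\La_\gm])$ is a finitely generated $\qlbar$-algebra, so its graded modules of the relevant shape are finitely generated; no appeal to dominance is made or needed. If you want to keep a more ``combinatorial'' flavor, the fix is to replace your step (ii) by this Gordan-type finite-generation statement for the shifted filtration on the lattice, rather than trying to reduce to dominant representatives.
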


\begin{proof}
Since the filtration $\{\Fl^{\leq'\ov{x}}_{\gm}\}_{\ov{x}}$ on $\Fl_{\gm}$ is defined to be the preimage
of the filtration $\{\Gr^{\leq\ov{x}}_{\gm}\}_{\ov{x}}$ on $\Gr_{\gm}$, it will suffice to show the
assertion for $\{\Gr^{\leq\ov{x}}_{\gm}\}_{\ov{x}}$.

Notice that for every $\La_{\gm}$-invariant subset of $X\subseteq
\Gr_{\gm}$, the $\Gm$-filtration on $\Gr_{\gm}$ induces a
$\Gm$-filtration on $X$. Moreover, if $\Gr_{\gm}$ is a finite union
$\bigcup_j X_j$ of $\La_{\gm}$-invariant subsets, then the filtration
on $\Gr_{\gm}$ is finitely generated if and only if the
corresponding filtration on each $X_j$ is finitely generated.

Recall that there exists a closed subscheme of finite type $Y\subseteq
\Gr_{\gm}$ such that $\Gr_{\gm}=\La_{\gm}(Y)$. Moreover, using
\rco{adm}(d), there exists a finite decomposition $Y=\bigcup_j Y_j$
such that each for each $j$ there exists a tuple $\ov{y}=\ov{y}_j$
such that $Y_j\subseteq L(U_C)y_C$ for all $C\in\C{C}$. Then $\Gr_{\gm}=\bigcup_j \La_{\gm}(Y_j)$, and it suffices to show
that the filtration $\{\La_{\gm}(Y_j)^{\leq\ov{x}}\}_{\ov{x}}$ on each
$\La_{\gm}(Y_j)$ is finitely generated over $\La_{\gm}$.

Note that for every $\ov{x}\in\Gm$ we have an equality
$\La_{\gm}(Y_j)^{\leq\ov{x}}=\La_{\gm}^{\leq\ov{x}-\ov{y}_j}(Y_j)$. Indeed, it follows from \cite[Proposition~3.1]{MV}
(or can be deduced from \rp{semiinf}) that for every $\mu\in\La_{\gm}$ and $z\in Y_j$ we have
$\mu z\in \La_{\gm}(Y_j)^{\leq\ov{x}}$ if and only if $\mu y_C\leq_C x_C$ for all $C\in\C{C}$. Hence
$\mu z\in \La_{\gm}(Y_j)^{\leq\ov{x}}$ if and only if $\mu\in\La_{\gm}^{\leq\ov{x}-\ov{y}_j}$ as claimed.

Therefore it is enough to show that the $\Gm$-filtration $\{(\La_{\gm})_{\ov{x}-\ov{y}_j}\}_{\ov{x}}$ is finitely generated over $\La_{\gm}$. Since
$R(\qlbar[\La_{\gm}])$ is a finitely generated $\qlbar$-algebra (by Section~\re{ex}(c)), the assertion follows.
\end{proof}

\begin{Cor} \label{C:Rees}
The Rees module $R(H_i(\Fl_{\gm}))$  is a finitely generated
$R(\qlbar[\La_{\gm}])$-module.
\end{Cor}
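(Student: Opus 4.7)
The plan is to deduce this immediately from Lemma \ref{L:good} applied to the data $\Gm=\B{Z}_{\geq 0}^{\Psi}$, $A=\La_{\gm}$, $X=\Fl_{\gm}$, $\C{F}=\qlbar$, where $\La_{\gm}$ carries the $\Gm$-filtration of \re{ex}(b) and $\Fl_{\gm}$ carries the compatible $\Gm$-filtration $\{\Fl^{\leq'\ov{x}}_{\gm}\}_{\ov{x}\in\Gm}$ of \re{ex}(d). Under this identification, the graded module $\oplus_{\ov{x}} H_i(\Fl^{\leq'\ov{x}}_{\gm})$ is by definition $R(H_i(\Fl_{\gm}))$, so finite generation will follow once the three hypotheses of \rl{good} are verified.

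First, the Rees algebra $R(\qlbar[\La_{\gm}])$ is Noetherian: this is exactly the content of \re{ex}(c), which applies Gordan's lemma to the finitely generated commutative monoid $R(\La_{\gm})$. Second, the filtration $\{\Fl^{\leq'\ov{x}}_{\gm}\}_{\ov{x}}$ is finitely generated over $\La_{\gm}$: this is precisely \rl{fg}. The only remaining item is to check that $\La_{\gm}$ acts on the set of irreducible components of $\Fl_{\gm}$ with finite stabilizers.

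For the latter, I would argue as follows. By \re{affsprfib}(c), there is a closed subscheme $Y\subset\Fl_{\gm}$ of finite type with $\Fl_{\gm}=\La_{\gm}(Y)$, so the set of irreducible components of $\Fl_{\gm}$ is the (finite) union of $\La_{\gm}$-orbits through the finitely many irreducible components of $Y$. On the other hand, $\La_{\gm}\subset S_{\gm}(K)\subset T(K)$ acts on $\Fl=L(G^{\sc})/I^{\sc}$ by left translation, and its image in the free abelian group $T(K)/T(\C{O})\cong \La$ is a finite-index subgroup of $X_*(S_{\gm})$, so the kernel of $\La_{\gm}\to\La$ is finite. Since an element of $T(\C{O})$ preserves every $I^{\sc}$-orbit in $\Fl$ and hence every irreducible component of every finite-type closed subscheme, the stabilizer in $\La_{\gm}$ of any irreducible component of $\Fl_{\gm}$ lies in this finite kernel.

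With all three hypotheses in place, \rl{good} yields that $R(H_i(\Fl_{\gm}))$ is finitely generated over $R(\qlbar[\La_{\gm}])$, completing the proof. I expect no real obstacle here; the one mildly technical point is the finite-stabilizer condition, but it reduces to the elementary observation that translation by $\La$ acts freely on the set of $I^{\sc}$-orbits (equivalently, on $\wt{W}/W\cdot\{\text{affine Weyl structure}\}$), so the main content of the corollary is already carried by \rl{good}, \rl{fg}, and \re{ex}(c).
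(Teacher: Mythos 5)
Your proposal follows exactly the same route as the paper: apply Lemma \ref{L:good} to the $\Gm=\B{Z}_{\geq0}^{\Psi}$-filtration on $\Fl_{\gm}$, citing \re{ex}(c) for Noetherianity of $R(\qlbar[\La_{\gm}])$ and \rl{fg} for finite generation of the filtration over $\La_{\gm}$. The paper's proof is a one-liner that leaves the finite-stabilizer hypothesis of \rl{good} implicit (it is presumably handled in \cite{BV}, to which \rl{good} refers), whereas you try to supply a verification — a reasonable instinct, but the argument you give is not valid as written.

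Concretely: since you are in the good-position setting of \re{goodpos}, the map $\La_{\gm}=X_*(S_{\gm})\to X_*(T)=\La$ is the inclusion, so its kernel is trivial rather than ``finite'' in some interesting sense, and the phrase ``finite-index subgroup of $X_*(S_{\gm})$'' is a misstatement (the image is exactly $X_*(S_{\gm})$). More importantly, the implication you draw runs the wrong way: the observation that elements of $T(\C{O})$ preserve $I^{\sc}$-orbits (and hence components) shows that the kernel stabilizes components, but it does not show that \emph{only} elements of the kernel can stabilize a component, which is what the finite-stabilizer condition requires. To close this you would need to argue directly that a nonzero $\mu\in\La_{\gm}\subset\La$ cannot fix a finite-type component — for instance by noting each component $C$ is $S_{\gm}$-invariant (so has $S_{\gm}$-fixed points lying in finitely many strata $Z_{\nu}$), and translation by $\mu\ne 0$ moves every such stratum to a distinct one. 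This gap is orthogonal to the main content, which you handle correctly and identically to the paper.
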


\begin{proof}
Since Rees algebra $R(\qlbar[\La_{\gm}])$ is Noetherian (see Section~\re{ex}(c)), the assertion follows from Lemmas~\ref{L:good} and \ref{L:fg}.
\end{proof}

Now we are ready to prove \rp{bound}.

\begin{Emp}
\begin{proof}[Proof of \rp{bound}]
Since $\Psi$ is finite, it will suffice to show the existence of
$r$ for a fixed $\psi$. For every $r\in\B{N}$, the embeddings
$\Fl^{\leq'\ov{x}}\hra \Fl^{\leq'\ov{x}+r\ov{e}_{\psi}}$ for all
$\ov{x}$, induce a homomorphism of
$R(\qlbar[\La_{\gm}])$-modules
$\iota_{r\ov{e}_{\psi}}:R(H_i(\Fl_{\gm}))\to R(H_i(\Fl_{\gm}))$,
and \rp{bound} asserts that $\Ker\iota_{r\ov{e}_{\psi}}=\Ker\iota_{(r+1)\ov{e}_{\psi}}$
for some $r$.

Since $\{\Ker\iota_{r\ov{e}_{\psi}}\}_r$ is an increasing sequence
of $R(\qlbar[\La_{\gm}])$-submodules of $R(H_i(\Fl_{\gm}))$, the Rees algebra
$R(\qlbar[\La_{\gm}])$ is Noetherian (by Section~\re{ex}(c)), while
$R(H_i(\Fl_{\gm}))$ is finitely generated (by \rco{Rees}), this
sequence stabilizes.
\end{proof}
\end{Emp}


The following lemma will be used in the proof of \rt{inj'}.

\begin{Lem} \label{L:trunc}
There exists $m\in\B{N}$ such that for every $m$-regular tuple
$\ov{x}\in\B{Z}^{\Psi}$ and every $\psi\in\Psi$ such that
$\check{\psi}\notin(\La_{\gm})_{\B{Q}}$, we have
$\Fl_{\gm}^{\leq'\ov{x}}=\Fl_{\gm}^{\leq'\ov{x}+\ov{e}_{\psi}}$.
\end{Lem}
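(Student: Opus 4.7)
My plan is to argue by contradiction using \rl{k-reg}. Since $\Fl^{\leq'\ov{x}}=\pr^{-1}(\Gr^{\leq\ov{x}})$, the equality $\Fl_{\gm}^{\leq'\ov{x}}=\Fl_{\gm}^{\leq'\ov{x}+\ov{e}_{\psi}}$ is equivalent to the emptiness of $(\Gr^{\leq\ov{x}+\ov{e}_{\psi}}\setminus\Gr^{\leq\ov{x}})\cap\Gr_{\gm}$, where $\Gr_{\gm}:=\pr(\Fl_{\gm})$. Suppose $z$ lies in this intersection. By (the $\Gr$-analogue of) \rco{adm}(d), the admissible tuple $\ov{\mu}\in\La^{\C{C}}$ with $z\in U_C(K)\mu_C$ for all $C$ must satisfy $\mu_{\psi}=x_{\psi}+1$ and $\mu_{\psi'}\leq x_{\psi'}$ for $\psi'\neq\psi$. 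Fixing $C\owns\psi$ and writing $z=u\mu_C(t)L^+(G)$ with $u\in U_C(K)$, the Springer fiber condition rewrites as
\[
\mu_C(t)^{-1}(u^{-1}\gm u)\mu_C(t)\in L^+(G),
\]
which, decomposing $\gm':=u^{-1}\gm u$ along root subgroups, forces the $t$-adic valuation of the $U_{\al}$-component of $\gm'$ to be at least $\langle\al,\mu_C\rangle$ for every $\al\in\Phi$ with $\langle\al,\mu_C\rangle>0$.

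Next I apply \rl{k-reg} to the regular admissible tuple $\ov{x}+\ov{e}_{\psi}$ at the hyperplane $\{\langle\psi,\cdot\rangle=x_{\psi}+1\}$ on which $\mu_C$ sits: for every root $\al\in\Phi$ with $\langle\al,\check{\psi}\rangle>0$ I obtain $\langle\al,\mu_C\rangle>0$, and a quantitative tracking of the proof (using $(C',m)$-regularity of each $(\ov{x}+\ov{e}_{\psi})_{C'}$ with $C'\owns\psi$) yields the sharper estimate $\langle\al,\mu_C\rangle\geq m-c$ for an absolute constant $c$ depending only on root data. The hypothesis $\check{\psi}\notin\La_{\gm}$ is equivalent to $T_{\psi}\not\subset S_{\gm}$, hence to $\gm\notin M_{\psi}(K)$ for $M_{\psi}:=Z_G(T_{\psi})$. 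In fact $\gm\notin P_{\psi}(K)$ as well: if $\gm\in P_{\psi}(K)$, the standard fact that a regular semisimple element of a parabolic is conjugate into its Levi gives a unique $u\in U_{\psi}(K)$ with $u^{-1}\gm u=\gm_0\in M_{\psi}(K)$; then $uT_{\psi}u^{-1}\subset G_{\gm}$ is a split $K$-subtorus and so, by maximality of $S_{\gm}$, lies in $S_{\gm}\subset T$; but a direct computation shows $uT_{\psi}u^{-1}\subset T$ forces $u$ to centralize $T_{\psi}$, so $u\in U_{\psi}\cap M_{\psi}=\{1\}$ and $\gm=\gm_0\in M_{\psi}(K)$, a contradiction. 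Thus $\gm$ has a nontrivial $U_{\psi}^-$-part in its Bruhat factorization with respect to $P_{\psi}$.

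The hard part, which I expect will require the bulk of the work, is a uniform bound $N=N(\gm,\psi)$ such that for every $u\in U_C(K)$, at least one $U_{\al}$-component of $u^{-1}\gm u$ with $\langle\al,\check{\psi}\rangle>0$ has valuation $\leq N$. Granted this bound, choosing $m\geq N+c$ makes $\langle\al,\mu_C\rangle\geq N$, contradicting the valuation constraint of the first paragraph and finishing the proof. I plan to establish the bound by a Chevalley commutator analysis of the $U_C(K)$-conjugation action on the big Bruhat cell $U_{\psi}^- M_{\psi} U_{\psi}$: attaining arbitrarily high valuations in every positive-$\check{\psi}$-weight root component of $u^{-1}\gm u$ would push this element arbitrarily close, in the $t$-adic topology, to $P_{\psi}^-(K)$, and passing to a limit (in a suitable completion) would produce a $G(K)$-conjugate of $\gm$ inside $P_{\psi}^-(K)$; by the argument of the preceding paragraph applied symmetrically to $P_{\psi}^-$, this contradicts $\check{\psi}\notin\La_{\gm}$, so no such limit exists and the obstruction yields the required $N$.
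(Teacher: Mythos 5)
Your reduction to a statement about the cocharacter lattice is headed in the wrong direction, and the proof as written has a genuine gap. Let me first explain the gap, then contrast with the paper's route.

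The gap is in the final paragraph. You reduce the lemma to a claimed \emph{uniform} valuation bound: there should exist $N$ such that for every $u\in U_C(K)$, some $U_\alpha$-component of $u^{-1}\gamma u$ with $\langle\alpha,\check\psi\rangle>0$ has valuation $\leq N$. You acknowledge this is ``the hard part'' and propose to establish it by a compactness/limit argument (``passing to a limit in a suitable completion would produce a $G(K)$-conjugate of $\gamma$ inside $P^-_\psi(K)$''). This step does not work as stated: conjugacy classes in $G(K)$ are not closed, so a limit of conjugates of $\gamma$ need not be a conjugate of $\gamma$; moreover, when you push the positive-$\check\psi$-weight components to high valuation, the conjugating elements $u$ typically go to infinity, so there is no compactness to exploit and ``a suitable completion'' is doing all the unstated work. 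In addition, the reduction is itself stronger than needed: you only ever consider $u\in U_C(K)$ subject to the Springer-fiber constraint $u^{-1}\gamma u\in\mu_C(t)G(\mathcal O)\mu_C(t)^{-1}$, but your proposed bound is over \emph{all} $u$; asking for the stronger statement is exactly what forces you into the difficult limit argument. Finally, there is a minor issue earlier: you freely speak of the ``$U_\alpha$-component'' of $u^{-1}\gamma u$, which presupposes a Bruhat or big-cell factorization that is not available for arbitrary elements of $G(K)$.

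The paper's proof avoids the valuation analysis entirely and is much simpler. It reuses the setup of Lemma \ref{L:fg}: $\Gr_\gamma$ is covered by finitely many $\Lambda_\gamma$-translates $\Lambda_\gamma(Y_j)$ of pieces $Y_j$ contained in single $U_C(K)$-orbits $U_C(K)y_C$, and the filtration satisfies $\Lambda_\gamma(Y_j)^{\leq\ov x}=\Lambda_\gamma^{\leq\ov x-\ov y_j}(Y_j)$. The whole lemma therefore reduces to a statement purely about the lattice $\Lambda_\gamma$: one must check that every $\mu\in\Lambda^{\leq\ov x-\ov y_j+\ov e_\psi}\smallsetminus\Lambda^{\leq\ov x-\ov y_j}$ fails to lie in $\Lambda_\gamma$. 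At that point the structural input from \ref{E:goodpos}(b) — that $G^{S_\gamma}$ is a Levi with connected center $S_\gamma$ — shows that $\check\psi\notin\Lambda_\gamma$ forces the existence of a root $\alpha\in\Lambda_\gamma^\perp$ with $\langle\alpha,\check\psi\rangle>0$, and Lemma \ref{L:k-reg} gives $\langle\alpha,\mu\rangle>0$, so $\mu\notin\Lambda_\gamma$. Your observation that $\check\psi\notin\Lambda_\gamma$ implies $\gamma\notin P_\psi(K)$ is correct and interesting, but it pushes you toward a harder quantitative analysis of $\gamma$ itself, whereas the paper needs only a qualitative fact about roots orthogonal to $\Lambda_\gamma$. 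If you want to salvage your route, you would need to prove the uniform bound directly (and for the constrained $u$ only), which is considerably more work than the lattice-theoretic reduction.
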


\begin{proof}
Our argument is similar to that of \rl{fg}. It is enough to show the equality $\Gr_{\gm}^{\leq\ov{x}}=\Gr_{\gm}^{\leq\ov{x}+\ov{e}_{\psi}}$.
Let $Y, Y_j$ and $\ov{y}_j$ be as in the proof of \rl{fg}, and  choose $m\in\B{N}$ such that for
every $m$-regular $\ov{x}$, the tuples $\ov{x}-\ov{y}_j+\ov{e}_{\psi}$
is regular. We claim that this $m$ satisfies the required property.

\smallskip

It suffices to show that $\La_{\gm}(Y_j)^{\leq\ov{x}}=\La_{\gm}(Y_j)^{\leq\ov{x}+\ov{e}_{\psi}}$
for each $j$. For this it suffices to show that $\La_{\gm}^{\leq \ov{x}-\ov{y}_j}=
\La_{\gm}^{\leq \ov{x}-\ov{y}_j+\ov{e}_{\psi}}$. In other words, we have to show that every  $\mu\in \La^{\leq \ov{x}-\ov{y}_j+\ov{e}_{\psi}}\sm
\La^{\leq \ov{x}-\ov{y}_j}$ does not belong to $\La_{\gm}$.

We are going to deduce the assertion from \rl{k-reg}(b).
Since $\check{\psi}\notin(\La_{\gm})_{\B{Q}}$, it follows from Section~\re{remgood}(d) that there exists a root
$\al\in\Phi$ such that $\al\in (\La_{\gm})^{\perp}$ and $\langle\al,\check{\psi}\rangle>0$. Since $\lan\psi,\mu\ran=(\ov{x}-\ov{y}_j+\ov{e}_{\psi})_{\psi}$, and the tuple $\ov{x}-\ov{y}_j+\ov{e}_{\psi}$ is regular by assumption, we conclude from  \rl{k-reg}(b) that $\langle\al,\mu\rangle>0$.
Therefore  $\mu\notin\La_{\gm}$, because $\al\in(\La_{\gm})^{\perp}$.
\end{proof}

%

\section{Proof of \rt{inj'}}

\subsection{Localization theorem for equivariant cohomology} \label{S:eqcoh}
In this section we will review basic facts about equivariant cohomology (with compact support), including a version of a localization theorem.

\begin{Emp} \label{E:cohart}
{\bf Total cohomology of Artin stacks.} For an Artin stack $\C{X}$ of finite type over $k$ and
$\C{F}\in D^b_c(\C{X},\qlbar)$, we denote by $H^{\bullet}(\C{X},\C{F}):=\bigoplus_i H^i(\C{X},\C{F})$ its total
cohomology, and set $H^{\bullet}(\C{X}):= H^{\bullet}(\C{X},\qlbar)$.

\smallskip

(a) Note that $H^{\bullet}(\C{X})=\Ext^{\bullet}_{\C{X}}(\qlbar,\qlbar)$ is a graded $\qlbar$-algebra, and identification
$\C{F}[\bullet]=\qlbar[\bullet]\otimes_{\qlbar}\C{F}$ give to $H^{\bullet}(\C{X},\C{F})$ a natural structure of a graded
$H^{\bullet}(\C{X})$-module.

\smallskip

(b) Every morphism
$\C{F}_1\to\C{F}_2$ in $D^b_c(\C{X},\qlbar)$ gives rise to a homomorphism
$H^{\bullet}(\C{X},\C{F}_1)\to H^{\bullet}(\C{X},\C{F}_2)$ of graded $H^{\bullet}(\C{X})$-modules.

\smallskip

(c) For every homomorphism $f:\C{X}\to\C{Y}$ of Artin stacks of finite type over $k$,
the pullback $f^*:H^{\bullet}(\C{Y})\to H^{\bullet}(\C{X})$ is a homomorphism of graded $\qlbar$-algebras. Moreover,
for every $\C{F}\in D^b_c(\C{Y},\qlbar)$ the pullback $f^*$ gives rise to a homomorphism
\[
H^{\bullet}(\C{X})\otimes_{H^{\bullet}(\C{Y})}H^{\bullet}(\C{Y},\C{F})\to H^{\bullet}(\C{X},f^*\C{F})
\]
of graded $H^{\bullet}(\C{X})$-modules.

\end{Emp}

\begin{Emp} \label{E:eqcoh}
{\bf Equivariant cohomology} (compare \cite{BL, GKM, Ac, AF}). Let $G$ be an algebraic group over $k$, let $X$ be a separated scheme of finite type over $k$ equipped
with a $G$-action, set $\pt:=\Spec k$, let $[\pt/G]$ be the classifying stack of $G$, and let $\pr_X:[X/G]\to[\pt/G]$ be the natural projection.

\smallskip

(a) For every $\C{F}\in D^b_c([X/G],\qlbar)$, we define its equivariant cohomology
\[
H^{\bullet}_{G}(X,\C{F}):=H^{\bullet}([X/G],\C{F}))=H^{\bullet}([\pt/G],R(\pr_X)_*(\C{F})),
\]
equivariant cohomology with compact support
\[
H^{\bullet}_{c,G}(X,\C{F}):=H^{\bullet}([\pt/G],R(\pr_X)_!(\C{F})),
\]
and set $H^{\bullet}_{G}(X):=H^{\bullet}_{G}(X,\qlbar)$ and $H^{\bullet}_{c,G}(X):=H^{\bullet}_{c,G}(X,\qlbar)$.

\smallskip

(b) By Section~\re{cohart}(a), $H^{\bullet}_G(\pt)$ is a graded $\qlbar$-algebra, while both $H^{\bullet}_{G}(X,\C{F})$ and $H^{\bullet}_{c,G}(X,\C{F})$ have natural structures of graded $H^{\bullet}_G(\pt)$-modules.

\smallskip

(c) Note that $H^{\bullet}_G(X)=\Ext^{\bullet}_{[X/G]}(\qlbar,\qlbar)$ is a graded $\qlbar$-algebra, hence both $H^{\bullet}_{G}(X,\C{F})$ and $H^{\bullet}_{c,G}(X,\C{F})$ have natural structures of graded $H^{\bullet}_G(X)$-modules.

\smallskip

(d) Note that the structures of
$H^{\bullet}_{G}(X,\C{F})$ and $H^{\bullet}_{c,G}(X,\C{F})$ of graded $H^{\bullet}_G(\pt)$-modules from part~(b), are obtained from
structures of graded $H^{\bullet}_G(X)$-modules from part~(c) by the homomorphism
\[
(p_X)^*:H^{\bullet}_G(\pt)=H^{\bullet}([\pt/G])\to H^{\bullet}([X/G])=H^{\bullet}_G(X)
\]
of graded $\qlbar$-algebras from Section~\re{cohart}(c).

\end{Emp}

\begin{Emp} \label{E:propeq}
{\bf Simple properties.} Let $G$, $X$ and $\C{F}$ be as in Section~\re{eqcoh}.

\smallskip

(a) Using Section~\re{cohart}(b), for each closed $G$-invariant subscheme $Z\subseteq X$ the long exact
sequence for cohomology with compact support naturally upgrades to an exact sequence
\[
H^{\bullet}_{c,G}(Z)[-1]\overset{\dt}{\to} H^{\bullet}_{c,G}(X\sm Z)\to H^{\bullet}_{c,G}(X)\to H^{\bullet}_{c,G}(Z)\overset{\dt}{\to}  H^{\bullet}_{c,G}(X\sm Z)[1]
\]
of graded $H^{\bullet}_G(\pt)$-modules, functorial in $(X,Z)$.
\smallskip

(b) If $G$ acts trivially on $X$, then we have canonical isomorphism
\[
H^{\bullet}_{c,G}(X)\simeq   H^{\bullet}_G(\pt)\otimes_{\qlbar} H_c^{\bullet}(X)
\] of
graded $H^{\bullet}_G(\pt)$-modules, functorial in $X$. Indeed, since $[X/G]\simeq X\times[\pt/G]$, the assertion follows from
K\"unneth formula. Alternatively, choose a compactification
$j:X\hra \ov{X}$ of $X$, and apply \cite[Proposition~6.7.5]{Ac} for $H^{\bullet}_{\{1\}\times G}(\ov{X}\times\pt,(j_!\qlbar)\boxtimes\qlbar)$.

\smallskip

(c) Using observation of Section~\re{cohart}(c) applied to the projection $\pi:\pt\to[\pt/G]$ and an object $R(p_X)_!(\C{F})\in D^b_c([\pt/G],\qlbar)$, we have a homomorphism $\pi^*:H^{\bullet}_G(\pt)\to H^{\bullet}(\pt)=\qlbar$ of graded $\qlbar$-algebras and a homomorphism
\begin{equation} \label{Eq:tensor}
\qlbar\otimes_{H^{\bullet}_G(\pt)} H_{c,G}^{\bullet}(X,\C{F})\to H_{c}^{\bullet}(X,\C{F})
\end{equation}
of graded vector spaces (compare \cite[equation~(6.7.2)]{Ac}).

Moreover, if
$H^{\bullet}_{c,S}(X,\C{F})$ is a free graded $H^{\bullet}_S(\pt)$-module, then morphism \form{tensor} is an isomorphism.
Indeed, as in the proof of \cite[Lemma~6.7.4]{Ac}, one first reduces to the case $X=\pt$ in which case the assertion follows from
\cite[Lemma~6.7.3]{Ac}.
\end{Emp}

\begin{Emp} \label{E:locthm}
{\bf Localization theorem} (compare \cite{GKM, AF}). Let $S$ be an algebraic torus acting on a separated scheme $X$ of finite type over $k$.

\smallskip

(a) Recall that graded $\qlbar$-algebra $H^{\bullet}_S(\pt)$ is canonically isomorphic with the symmetric algebra
$\on{Sym}^{\bullet}_{\qlbar}(X^*(S)\otimes_{\B{Z}}\qlbar(-1)[-2])$, where $X^*(S)$ denote the group of characters of $S$, while $[-2]$ indicates that the vector space $X^*(S)\otimes_{\B{Z}}\qlbar(-1)$ is placed in degree $2$
(see, for example, \cite[Theorem~6.7.7]{Ac}).

We fix an isomorphism of $\qlbar$-vector spaces $\qlbar(-1)\simeq\qlbar$, thus
we can view $X^*(S)$ as a subset of $\on{Sym}^{\bullet}_{\qlbar}(X^*(S)\otimes_{\B{Z}}\qlbar)\simeq\on{Sym}^{\bullet}_{\qlbar}(X^*(S)\otimes_{\B{Z}}\qlbar(-1))\simeq H^{\bullet}_S(\pt)$.

\smallskip

(b) By Section~\re{eqcoh}(b), both $H_{c,S}^{\bullet}(X,\C{F})$ and $H_{S}^{\bullet}(X,\C{F})$ are graded  $H^{\bullet}_S(\pt)$-modules for every $\C{F}\in D^b_c([X/S],\qlbar)$. We claim that if $X^S=\emptyset$, then there exists $\la\in X^*(S)\subseteq H^{\bullet}_S(\pt)$, which acts on each  $H_{c,S}^{\bullet}(X,\C{F})$ and $H_{S}^{\bullet}(X,\C{F})$ as zero.

Indeed, by a particular case of the localization theorem
(see, for example, \cite[Chapter 7, Theorem 1.1]{AF}) there exists $\la\in X^*(S)$ such that the image of $\la$ under the pullback $(p_X)^*:H^{\bullet}_S(\pt)\to H_S^{\bullet}(X)$ is zero, so the assertion follows by the observation of Section~\re{eqcoh}(d).

\smallskip

(c) The pullback $H^{\bullet}_{c,S}(X,\C{F})\to H^{\bullet}_{c,S}(X^S,\C{F}|_{X^S})$ induces an isomorphism of localizations
\[
(X^*(S))^{-1} H^{\bullet}_{c,S}(X,\C{F})\isom (X^*(S))^{-1} H^{\bullet}_{c,S}(X^S,\C{F}|_{X^S}).
\]
Indeed, by part~(b) we have $(X^*(S))^{-1} H^{\bullet}_{c,S}(X\sm X^S,\C{F})=0$, so the assertion follows
from the exact sequence of Section~\re{propeq}(a).

\smallskip

(d) If $H^{\bullet}_{c,S}(X,\C{F})$ is a free (or more generally torsion free) $H^{\bullet}_S(\pt)$-module, then the restriction
map $H^{\bullet}_{c,S}(X,\C{F})\to H^{\bullet}_{c,S}(X^S,\C{F}|_{X^S})$ is injective. Indeed, our assumption implies that
the canonical map $H^{\bullet}_{c,S}(X,\C{F})\to (X^*(S))^{-1} H^{\bullet}_{c,S}(X,\C{F})$ is injective, so the assertion follows from part~(c).
\end{Emp}

\subsection{Criterion of injectivity}

\begin{Emp} \label{E:bmhom}
{\bf Borel--Moore homology.} To every scheme $X$ of
finite type over $k$ one associates the Borel-Moore homology groups
$H_{i,BM}(X):=H^i_c(X,\qlbar)^*$. In particular, we have
$H_{i,BM}(X)=H_i(X)$, if $X$ is proper over $k$. Also for every
closed subscheme $Z\subseteq X$, we have a long exact sequence
\[
\to H_{i,BM}(Z)\to H_{i,BM}(X)\to H_{i,BM}(X\sm Z)\to
H_{i-1,BM}(Z)\to.
\]
\end{Emp}

\begin{Lem} \label{L:ker}
Let $X$ be a closed subscheme of $Y$, and let
$\iota:H_{i,BM}(X)\to H_{i,BM}(Y)$ be the natural map.

\smallskip

(a) The map $\iota$ is injective, if there exists a closed
subscheme $Z\subseteq X$ is such that
\[
\Ker\left(H_{i,BM}(Z)\to H_{i,BM}(X)\right)=\Ker\left(H_{i,BM}(Z)\to H_{i,BM}(X)\to H_{i,BM}(Y)\right),
\]
and the map $H_{i,BM}(X\sm Z)\to H_{i,BM}(Y\sm Z)$ is injective.

\smallskip

(b) The map $\iota$ is injective, if there exists a closed
subscheme $Z\subseteq Y$ containing $Y\sm X$ such that the natural
map $H_{i,BM}(Z\cap X)\to H_{i,BM}(Z)$ is injective.
\end{Lem}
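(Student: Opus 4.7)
The plan is to deduce both statements by diagram chases on the Borel-Moore long exact sequences from \re{bmhom}, together with the functoriality of these sequences with respect to morphisms of pairs $(X', Z') \to (Y', W')$ (with $Z' \subset X'$, $W' \subset Y'$ closed, and the ambient map sending $Z'$ into $W'$).

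For (a), I would arrange the long exact sequences for the pairs $(X, Z)$ and $(Y, Z)$ into a commutative ladder whose vertical maps are the identity on $H_{*,BM}(Z)$, the map $\iota$ on $H_{*,BM}(X)$, and the restriction $H_{*,BM}(X \sm Z) \to H_{*,BM}(Y \sm Z)$. Given $\alpha \in H_{i,BM}(X)$ with $\iota(\alpha) = 0$, commutativity of the right square shows that the image of $\alpha$ in $H_{i,BM}(X \sm Z)$ maps to $0$ in $H_{i,BM}(Y \sm Z)$, so by the injectivity hypothesis it already vanishes. Exactness of the top row then produces $\beta \in H_{i,BM}(Z)$ mapping to $\alpha$. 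Commutativity of the left square shows that $\beta$ lies in $\Ker(H_{i,BM}(Z) \to H_{i,BM}(Y))$, hence in $\Ker(H_{i,BM}(Z) \to H_{i,BM}(X))$ by the kernel hypothesis, and so $\alpha = 0$.

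For (b), the main observation is that $Y \sm X \subset Z$ is equivalent to $Y \sm Z \subset X$, which gives the identification $X \sm (Z \cap X) = X \cap (Y \sm Z) = Y \sm Z$ of open subschemes. Consequently the long exact sequences for the pairs $(X, Z \cap X)$ and $(Y, Z)$ arrange into a commutative ladder in which the columns indexed by $H_{*,BM}(Y \sm Z)$ are the identity, while the map $H_{i,BM}(Z \cap X) \to H_{i,BM}(Z)$ is injective by hypothesis. A four-lemma style chase then applies: if $\alpha \in H_{i,BM}(X)$ satisfies $\iota(\alpha) = 0$, then its image in $H_{i,BM}(X \sm (Z \cap X)) = H_{i,BM}(Y \sm Z)$ is zero, so $\alpha = f(\beta)$ for some $\beta \in H_{i,BM}(Z \cap X)$. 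The image $i_1(\beta) \in H_{i,BM}(Z)$ need not vanish, but it is annihilated by the map to $H_{i,BM}(Y)$, so by exactness it equals $\partial_Y(\gamma)$ for some $\gamma \in H_{i+1,BM}(Y \sm Z)$. Commutativity of the connecting-morphism square combined with injectivity of $i_1$ yields $\beta = \partial_X(\gamma)$, so that $\alpha = f(\partial_X(\gamma)) = 0$ by exactness of the top row.

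The only step beyond routine chasing is the invocation of the functoriality of the boundary map in the last argument; the rest is formal, and I do not anticipate any serious obstacle.
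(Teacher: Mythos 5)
Your argument is correct and uses the same basic strategy as the paper: both parts are diagram chases on the Borel-Moore long exact sequence, using functoriality with respect to the inclusion of pairs. Your chase for (a) is identical to the paper's; for (b) you pair the LES of $(X,Z\cap X)$ with that of $(Y,Z)$ (identifying $X\smallsetminus(Z\cap X)=Y\smallsetminus Z$), whereas the paper pairs $(Z,Z\cap X)$ with $(Y,X)$ (identifying $Z\smallsetminus(Z\cap X)=Y\smallsetminus X$), which gives a somewhat shorter chase that never needs the connecting-map square explicitly — but the two ladders encode the same information and both are valid.
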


\begin{proof}
Both assertions follow from a straightforward diagram chase.
Namely,  assertion (a) follows from the commutative diagram

\[
\CD
H_{i,BM}(Z)@>>> H_{i,BM}(X) @>>> H_{i,BM}(X\sm Z) \\
@.       @VVV  @VVV\\
 @. H_{i,BM}(Y) @>>> H_{i,BM}(Y\sm Z)
\endCD
\]
with an exact first row, while assertion (b) follows from the commutative
diagram with exact rows
\[
\CD
H_{i+1,BM}(Z\sm (Z\cap X))@>>> H_{i,BM}(Z\cap X) @>>> H_{i,BM}(Z) \\
@|       @VVV  @VVV\\
H_{i+1,BM}(Y\sm X )@>>> H_{i,BM}(X) @>>> H_{i,BM}(Y).
\endCD
\]
\end{proof}

\begin{Not} \label{N:fibr}
By an {\em vector fibration} we will mean a flat morphism, whose
geometric fibers are fibrations of vector spaces.
\end{Not}

The following result uses notation of Section~\ref{S:eqcoh}.

\begin{Lem} \label{L:equiv}
(a) Let $S$ be a torus, let $Y$ be an $S$-equivariant scheme of finite
type over $k$, and let $X\subseteq Y$ be a closed $S$-invariant subscheme. Assume
that

\smallskip

\quad (i) the restriction map $H^{\bullet}_c(Y^S)\to H^{\bullet}_c(X^S)$ is surjective and

\smallskip

\quad (ii) both $H^{\bullet}_{S,c}(X)$ and $H^{\bullet}_{S,c}(Y\sm X)$ are free graded
$H^{\bullet}_S(\pt)$-modules.

\smallskip

\noindent Then $H^{\bullet}_{S,c}(Y)$ is a free graded $H^{\bullet}_S(\pt)$-module, and the
restriction map $H^{\bullet}_c(Y)\to H^{\bullet}_c(X)$ is surjective.

\smallskip

(b) Assume that $Y$ has a finite $S$-invariant filtration
$\emptyset=Y_0\subseteq Y_1\subseteq\ldots \subseteq Y_n=Y$ by closed reduced
subschemes such that for each $j=1,\ldots,n-1$,

\smallskip

\quad (i) the restriction map $H^{\bullet}_c(Y_j^S)\to H^{\bullet}_c(Y_{j-1}^S)$ is surjective and

\smallskip

\quad (ii) there exists an $S$-equivariant vector fibration $\pi_j:Y_j\sm
Y_{j-1}\to (Y_j\sm Y_{j-1})^S$.

\smallskip

\noindent  Then $H^{\bullet}_{S,c}(Y)$ is a free graded $H^{\bullet}_S(\pt)$-module.
\end{Lem}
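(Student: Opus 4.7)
The plan is to prove (a) by reducing the long exact sequence of equivariant compactly-supported cohomology to a short exact sequence via the localization theorem, and then prove (b) by induction on the length of the filtration, invoking (a) at each step.

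For (a), I would start from the long exact sequence of \re{eqcoh}(c)(i) applied to $X \subset Y$:
\[
\to H^i_{c,S}(Y\sm X) \to H^i_{c,S}(Y) \to H^i_{c,S}(X) \overset{\delta}{\to} H^{i+1}_{c,S}(Y\sm X) \to,
\]
and compare it with the analogous sequence for $X^S \subset Y^S$ via restriction to the $S$-fixed loci. By hypothesis (ii) and the localization theorem \re{eqcoh}(c)(iv), the restriction maps $H^*_{c,S}(X) \hookrightarrow H^*_{c,S}(X^S)$ and $H^*_{c,S}(Y\sm X) \hookrightarrow H^*_{c,S}((Y\sm X)^S)$ are injective. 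On the fixed loci $S$ acts trivially, so by \re{eqcoh}(c)(ii) the equivariant cohomology is $H^*(BS)\otimes_{\qlbar} H^*_c(\bullet)$; hypothesis (i) then translates into surjectivity of $H^*_{c,S}(Y^S) \to H^*_{c,S}(X^S)$, so the bottom connecting map vanishes. Commutativity of the comparison square together with injectivity of the right vertical arrow forces $\delta = 0$. The resulting short exact sequence
\[
0 \to H^i_{c,S}(Y\sm X) \to H^i_{c,S}(Y) \to H^i_{c,S}(X) \to 0
\]
splits, because $H^i_{c,S}(X)$ is projective (free) over the polynomial ring $H^*(BS)$, hence $H^i_{c,S}(Y)$ is free. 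For the second claim, applying $-\otimes_{H^*(BS)} \qlbar$ to this split sequence and invoking \re{eqcoh}(c)(iii) identifies the three terms with $H^*_c(Y\sm X)$, $H^*_c(Y)$, $H^*_c(X)$; the surjectivity of $H^*_c(Y) \to H^*_c(X)$ follows.

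For (b), the proof would proceed by induction on $n$, the case $n=0$ being trivial. For the inductive step, assuming $H^*_{S,c}(Y_{j-1})$ is free, I would first establish freeness of $H^*_{S,c}(Y_j\sm Y_{j-1})$ from the vector fibration $\pi_j$: its base $(Y_j\sm Y_{j-1})^S$ is $S$-fixed, so by \re{eqcoh}(c)(ii) its equivariant cohomology is free; and since $\pi_j$ is an $S$-equivariant affine fibration, a Thom-type isomorphism identifies $H^*_{S,c}(Y_j\sm Y_{j-1})$ with a shift (and Tate twist, componentwise in fiber dimension) of $H^*_{S,c}((Y_j\sm Y_{j-1})^S)$, which is thus also free. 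Conditions (i) and (ii) of (a) are then satisfied for $X := Y_{j-1} \subset Y := Y_j$, and (a) gives that $H^*_{S,c}(Y_j)$ is free, completing the induction.

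The main obstacle I expect is justifying the Thom-type isomorphism used in (b): the paper's notion of \emph{vector fibration} (\ref{N:fibr}) is only a flat morphism whose geometric fibers are vector-space fibrations with affine transitions, not a vector bundle in the strict sense. I would handle this by combining the homotopy invariance of compactly supported \'etale cohomology for affine fibrations (so that $\pi_j^!$ induces an isomorphism up to shift) with $S$-equivariant descent to $BS$; the $S$-action presents no difficulty because on the base $S$ acts trivially and the base's equivariant cohomology is already free, so the Leray spectral sequence for $S\bs(Y_j\sm Y_{j-1}) \to BS$ degenerates.
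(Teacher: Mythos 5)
Your proof follows the paper's argument essentially step for step: part (a) is proved by comparing the two long exact sequences via the restriction to $S$-fixed loci, killing the lower connecting map using hypothesis (i) and \re{eqcoh}(c)(ii), then killing the upper connecting map using localization (\re{eqcoh}(c)(iv)) together with commutativity, and finally deducing both freeness and surjectivity from the resulting split short exact sequence after tensoring down by \re{eqcoh}(c)(iii); part (b) is proved by the same induction, with the vector-fibration step yielding freeness of each $H^*_{S,c}(Y_j\sm Y_{j-1})$ exactly as the paper does in its $n=1$ case. The only (minor) difference is that you are slightly more explicit than the paper about why the short exact sequence splits (projectivity of $H^*_{c,S}(X)$) and about the possible non-constancy of fiber dimension in the Thom-type shift, both of which the paper passes over silently.
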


\begin{proof}
(a) By Section~\re{propeq}(a), we have a commutative diagram
\[
\CD
@. H^{\bullet}_{c,S}(X) @>\dt_1>> H^{\bullet}_{c,S}(Y\sm X)[1] \\
@.       @VVV  @VVV\\
H^{\bullet}_{c,S}(Y^S)@>>> H^{\bullet}_{c,S}(X^S) @>\dt_2>> H^{\bullet}_{c,S}(Y^S\sm
X^S)[1]
\endCD
\]
of graded $H^{\bullet}_S(\pt)$-modules with exact bottom row, where vertical arrows are induced by the inclusion $Y^S\hra Y$. By
Section~\re{propeq}(b), we have canonical isomorphisms
\[
H^{\bullet}_{S,c}(Y^S)\cong H^{\bullet}_S(\pt)\otimes_{\qlbar} H^{\bullet}_c(Y^S)\text{ and }
H^{\bullet}_{S,c}(X^S)\cong H^{\bullet}_S(\pt)\otimes_{\qlbar}H^{\bullet}_c(X^S)
\] of
$H^{\bullet}_S(\pt)$-modules. Hence, by assumption~(i), the map $H^{\bullet}_{c,S}(Y^S)\to
H^{\bullet}_{c,S}(X^S)$ is surjective, therefore the connecting
homomorphism $\dt_2$ is zero.

By assumption (ii) and the localization theorem (see Section~\re{locthm}(d)), the right vertical map is injective,
hence the connecting homomorphism $\dt_1$ is zero as well. Thus, by Section~\re{propeq}(a), we get a short exact
sequence
\[
0\to H^{\bullet}_{c,S}(Y\sm X)\to H^{\bullet}_{c,S}(Y)\to H^{\bullet}_{c,S}(X)\to 0,
\]
of graded $H^{\bullet}_S(\pt)$-modules,  hence $H^{\bullet}_{S,c}(Y)$ is a free graded
$H^{\bullet}_S(\pt)$-module by assumption (ii).  In this case, we have canonical isomorphisms
\[
H^{\bullet}_c(Y)\cong \qlbar\otimes_{H^{\bullet}_S(\pt)} H^{\bullet}_{c,S}(Y)\text{ and }H^{\bullet}_c(X)\cong
\qlbar\otimes_{H^{\bullet}_S(\pt)}H^{\bullet}_{c,S}(X)
\] of graded vector spaces (see Section~\re{propeq}(c)),
therefore surjectivity of the map $H^{\bullet}_{c}(Y)\to H^{\bullet}_{c}(X)$
follows from the surjectivity of $H^{\bullet}_{c,S}(Y)\to H^{\bullet}_{c,S}(X)$.

\smallskip

(b) We are going to show the assertion by
induction on $n$. Assume first that $n=1$. Since $\pi:=\pi_1:Y\to Y^S$ is an
$S$-equivariant vector fibration of some relative dimension $N$, we
conclude that $R\pi_!(\qlbar)\cong\qlbar[2N](N)$. Therefore
\[
H^{\bullet}_{c,S}(Y,\qlbar)\cong H^{\bullet}_{c,S}(Y^S,\qlbar)[2N](N)\cong
H^{\bullet}_S(\pt)\otimes_{\qlbar} H^{\bullet}_c(Y^S)[2N](N)
\]
(use Section~\re{propeq}(b)) is a free graded $H^{\bullet}_S(\pt)$-module.

Now assume that $n>1$, and set $X:=Y_{n-1}$. By the induction hypothesis, both $H^{\bullet}_{S,c}(X)$ and
$H^{\bullet}_{S,c}(Y\sm X)$ are free graded $H^{\bullet}_S(\pt)$-modules. Therefore by assumption (i), all assumptions of part~(a)
are satisfied, thus $H^{\bullet}_{S,c}(Y)$ is a free graded $H^{\bullet}_S(\pt)$-module.
\end{proof}

\begin{Lem} \label{L:fibr}
Let $Y$ be an ind-scheme of ind-finite type over $k$ equipped with an action of a torus
$S$, let $p:Y\to Y^S$ be an $S$-equivariant vector fibration such that its restriction $p|_{Y^S}$ is the identity,
and let $X\subseteq Y$ be a reduced locally closed ind-subscheme such that we have an inclusion of sets $p^{-1}(p(X))\subseteq X$.

\smallskip

Then $X$ is equal to the schematic preimage $p^{-1}(X^S)\subseteq Y$. In particular, $X$ is $S$-invariant, and $p$ induces an $S$-equivariant vector fibration $p_X:X\to X^S$ such that $p_X|_{X^S}$ is the identity.
\end{Lem}

\begin{proof}
Notice that since the inclusion $p^{-1}(p(X))\supseteq X$ always holds, we have an equality of sets $p^{-1}(p(X))=X$, thus the ind-subscheme
$X\subseteq Y$ is $S$-invariant.

Next we claim that we have an equality of sets $p(X)=X^S$. Indeed, $p|_{Y^S}$ is the identity, we get $p(X^S)=X^S$ and
$p(X)\subseteq p^{-1}(p(X))$. Therefore we have inclusions
\[
X^S=p(X^S)\subseteq p(X)\subseteq p^{-1}(p(X))\cap Y^S\subseteq X\cap Y^S=X^S.
\]
By the proven above, we have an equality of sets $p^{-1}(X^S)=p^{-1}(p(X))=X$, and from this the assertion follows: Indeed, since $X$ is reduced and $S$ is a torus, we conclude that $X^S$ is reduced. Since $p$ is a smooth, the schematic preimage $p^{-1}(X^S)$ is reduced, so the equality of
reduced ind-subschemes $p^{-1}(X^S)=X$ follows from the corresponding equality of the underlying sets.
\end{proof}

\begin{Cor} \label{C:crit}
Let $Z$ be an $S$-equivariant ind-scheme of ind-finite type over $k$,
$\{Z_{\nu}\}_{\nu\in\Xi}$ an $S$-invariant stratification of $Z$,
$Y\subseteq Z$ an $S$-invariant locally closed subscheme of finite
type over $k$, and $X\subseteq Y$ an $S$-invariant closed subscheme.

\smallskip

Assume that for each $\nu\in\Xi$,

\smallskip

\quad (a) the stratum $Z_{\nu}^S$ is an open and closed ind-subscheme of $Z^S$;

\smallskip

\quad (b) the map $H_{i,BM}(X\cap Z_{\nu}^S)\to H_{i,BM}(Y\cap
Z_{\nu}^S)$ is injective for all $i$;

\smallskip

\quad (c) there exists an $S$-equivariant vector fibration
$p_{\nu}:Y\cap Z_{\nu}\to Y\cap Z_{\nu}^S$ between reduced intersections such that
the restriction $p_{\nu}|_{Y\cap Z_{\nu}^S}$ is the identity, and we have an inclusion of sets
$p_{\nu}^{-1}(p_{\nu}(X\cap Z_{\nu}))\subseteq X$.

\smallskip

Then the map $H_{i,BM}(X)\to H_{i,BM}(Y)$ is injective for all
$i$.
\end{Cor}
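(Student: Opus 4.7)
The plan is to dualize the desired injectivity on Borel--Moore homology to a surjectivity statement $H^*_c(Y)\twoheadrightarrow H^*_c(X)$ on cohomology with compact support, and to establish the latter by $S$-equivariant localization via \rl{equiv}. First I would reduce to a finite stratification: since $Y$ is of finite type while $\{Z_\nu\}_\nu$ stratifies the ind-scheme $Z$, only finitely many $\nu$ meet $Y$; I would order them $\nu_1,\dots,\nu_n$ so that each $Y_j:=Y\cap\bigcup_{i\leq j}Z_{\nu_i}$ (and hence $X_j:=X\cap Y_j$) is closed in $Y$, with $Y_j\setminus Y_{j-1}=Y\cap Z_{\nu_j}$.

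The key geometric point is to apply \rl{fibr} stratum by stratum to both $X$ and to the complement $Y\setminus X$. Hypothesis (iii) gives $p_\nu^{-1}(p_\nu(X\cap Z_\nu))\subset X$; since the left-hand side automatically lies in $Y\cap Z_\nu$, this upgrades to the equality $p_\nu^{-1}(p_\nu(X\cap Z_\nu))=X\cap Z_\nu$ required by \rl{fibr}. I would then verify the elementary fact that this saturation is inherited by $(Y\setminus X)\cap Z_\nu$: from $p_\nu(X\cap Z_\nu)=X\cap Z_\nu^S$ one reads off that $p_\nu^{-1}((Y\setminus X)\cap Z_\nu^S)=(Y\setminus X)\cap Z_\nu$. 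Thus \rl{fibr} produces $S$-equivariant vector fibrations with tautological sections on the strata of $X$ and of $Y\setminus X$.

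The second input is hypothesis (i): because each $Z_\nu^S$ is open \emph{and} closed in $Z^S$, every $S$-fixed locus occurring below decomposes as a \emph{topological} disjoint union indexed by $\nu$. Consequently, for all $j$ the restrictions
\[
H^*_c(Y_j^S)\to H^*_c(Y_{j-1}^S),\quad H^*_c(X_j^S)\to H^*_c(X_{j-1}^S),\quad H^*_c((Y_j\setminus X)^S)\to H^*_c((Y_{j-1}\setminus X)^S)
\]
are projections onto direct summands and hence surjective. Combined with the vector-fibration structures of the previous paragraph, \rl{equiv}(b) then shows that $H^*_{S,c}(X)$, $H^*_{S,c}(Y)$, and $H^*_{S,c}(Y\setminus X)$ are all free $H^*(BS)$-modules.

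To finish, I would feed this freeness into \rl{equiv}(a) applied to the pair $X\subset Y$. The only remaining hypothesis of \rl{equiv}(a) is the surjectivity of $H^*_c(Y^S)\to H^*_c(X^S)$; this follows by dualizing hypothesis (ii) and using the disjoint decompositions $Y^S=\bigsqcup_\nu Y\cap Z_\nu^S$ and $X^S=\bigsqcup_\nu X\cap Z_\nu^S$. Then \rl{equiv}(a) delivers a surjection $H^*_c(Y)\twoheadrightarrow H^*_c(X)$, and dualizing once more yields the desired injection $H_{i,BM}(X)\hookrightarrow H_{i,BM}(Y)$. The main technical obstacle is precisely the verification that the saturation condition of \rl{fibr} is inherited by the open complement $Y\setminus X$; once that is secured, the rest is an assembly of the preceding lemmas in the correct order.
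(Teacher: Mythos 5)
Your proof is correct and follows essentially the same route as the paper: reduce by duality to surjectivity of $H^*_c(Y)\to H^*_c(X)$, then combine \rl{equiv}(a) (fed by hypotheses (i)–(ii) for the fixed-point surjectivity) with \rl{equiv}(b) (fed by a finite closed filtration by strata and the vector fibrations from (iii) via \rl{fibr}) to obtain the required freeness over $H^*(BS)$. Your explicit check that the saturation condition of \rl{fibr} passes from $X\cap Z_\nu$ to $(Y\smallsetminus X)\cap Z_\nu$ is a useful spelling-out of a step the paper treats tersely, but it is the same argument.
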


\begin{proof}
We are going to apply the criterion of \rl{equiv}(a).

\smallskip

It follows from assumption~(a) that $Y^S$ (resp. $X^S$) is a disjoint union of the
$Y\cap Z_{\nu}^S$'s (resp. $X\cap Z_{\nu}^S$'s). This observation together
with assumption~(b) implies that the map $H_{i,BM}(X^S)\to H_{i,BM}(Y^S)$
is injective for all $i$, which by duality implies that the map
$H^{\bullet}_c(Y^S)\to H^{\bullet}_c(X^S)$ is surjective.

\smallskip

It thus suffices to show that both $H^{\bullet}_{S,c}(X)$ and $H^{\bullet}_{S,c}(Y\sm X)$
are free graded $H^{\bullet}(S)$-modules. Indeed, \rl{equiv}(a) then would imply that
the restriction map $H^{\bullet}_c(Y)\to H^{\bullet}_c(X)$ is surjective, from which our assertion
would follow by duality.

\smallskip

We are going to apply the criterion of \rl{equiv}(b):

\smallskip

By assumption~(a), the
disjoint union $Y^S=\coprod_{\nu}(Y\cap Z^S_{\nu})$ is of finite
type, hence the set $\Xi_0:=\{\nu\in\Xi\,|\,Y\cap
Z_{\nu}^S\neq\emptyset\}$ is finite. On the other hand, by assumption~(c), we have $\Xi_0=\{\nu\in
\Xi\,|\,Y\cap Z_{\nu}\neq\emptyset\}$. Define a standard partial order on $\Xi_0$
requiring that $\al\leq\beta$ if
and only if $Z_{\al}\subseteq\ov{Z}_{\beta}$. Denote the cardinality
of $\Xi_0$ by $n$, write $\Xi_0$ in the form
$\Xi_0=\{\nu_1,\ldots,\nu_n\}$ such that $\nu_j$ is a minimal
element of the set $\{\nu_j,\ldots,\nu_n\}$ for all
$j=1,\ldots,n$.

For each $j=1,\ldots, n$ we denote by $Y_j$ the reduced intersection
$Y\cap (\bigcup_{t=1}^j Z_{\nu_t})$. Then by  construction, each
$Y_j\subseteq Y$ is closed,  $Y_n=Y$, and $Y_j\sm Y_{j-1}=Y\cap
Z_{\nu_j}$. It suffices to show that the induced filtrations $X_j:=X\cap Y_j$
of $X$ and $(Y\sm X)_j:=Y_j\cap (Y\sm X)$ satisfy the assumptions
of \rl{equiv}(b).

\smallskip

Since $Y^S$ is a disjoint union of the $(Y_j\sm Y_{j-1})^S$'s (by assumption (a)), assumption (i) of \rl{equiv}(b) follows.
Next, since $Y_j\sm Y_{j-1}=Y\cap Z_{\nu_j}$, we get $X_j\sm X_{j-1}=X\cap Z_{\nu_j}$
and $(Y\sm X)_j\sm (Y\sm X)_{j-1}=(Y\sm X)\cap Z_{\nu_j}$. Hence 
it remains to construct $S$-equivariant vector fibrations $X\cap Z_{\nu_j}\to X\cap
Z^S_{\nu_j}$ and $(Y\sm X)\cap Z_{\nu_j}\to (Y\sm X)\cap
Z^S_{\nu_j}$. But both fibrations are induced from vector fibration $p_{\nu_j}:Y\cap
Z_{\nu_j}\to Y\cap Z^S_{\nu_j}$ from assumption~(c) using \rl{fibr}.
\end{proof}

\subsection{The proof} \label{S:proof}

Now we are ready to prove our main result (\rt{inj'}).

\begin{Thm} \label{T:main}
There exists $m\in\B{N}$ (depending on $\gm$) such that for all $m$-regular admissible
tuples $\ov{w}_1,\ldots,\ov{w}_n\in\wt{W}^{\C{C}}$, the natural
map $H_i(\bigcup_{j=1}^n \Fl^{\leq \ov{w}_j}_{\gm})\to H_i(\Fl_{\gm})$
is injective for all $i\in\B{Z}$.
\end{Thm}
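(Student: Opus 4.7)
The plan is to induct on the semisimple rank of $G^{\sc}$ and to apply the localization criterion of \rco{crit} at each step. The trivial base case is rank zero, where $\Fl$ is a point; more generally, when $\La_\gm=0$ (so $\gm$ is elliptic) the fiber $\Fl_\gm$ is proper of finite type, and for $m$ large enough any $m$-regular admissible $\Fl^{\leq\overline{w}_j}_\gm$ already equals $\Fl_\gm$, making the claim trivial. Otherwise some $\psi\in\Psi$ satisfies $\check\psi\in\La_\gm$, and I will exploit this for the inductive step via the torus $S:=T_\psi$.

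First I would reduce the ind-scheme target $\Fl_\gm$ to finite-type subsets. Given $m$-regular admissible tuples $\overline{w}_1,\ldots,\overline{w}_n$, set $X:=\bigcup_j\Fl^{\leq\overline{w}_j}_\gm$ and choose an $m$-regular tuple $\overline{x}_0\in\La^\Psi$ large enough so that $X\subset Y_0:=\Fl^{\leq'\overline{x}_0}_\gm=\Fl^{\leq\overline{x}_0\cdot\overline{w}_{\on{st}}}_\gm$; by \rco{adm}(f), \re{exam}(b), and \re{qadm}(e) the tuple $\overline{x}_0\cdot\overline{w}_{\on{st}}$ is admissible. Extend $\overline{x}_0\leq\overline{x}_1\leq\cdots$ via \rl{seq} to a cofinal sequence of $m$-regular tuples with $\Fl_\gm=\bigcup_i Y_i$, where $Y_i:=\Fl^{\leq'\overline{x}_i}_\gm$ (using \rl{trunc} to discard directions $\psi'$ with $\check{\psi'}\notin\La_\gm$). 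Since filtered colimits of $\qlbar$-vector spaces preserve injections, it suffices to show that $H_i(X)\to H_i(Y_i)$ is injective for every $i$.

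To prove this injectivity for fixed $Y=Y_i$, I would apply \rco{crit} with $Z=\Fl$, $S=T_\psi$, and the semi-infinite stratification $\{Z_\nu\}_{\nu\in\wt{W}_\psi}$ from \re{strata}. Condition (i) is \re{strata}(g). Condition (iii) follows by combining the vector fibration $p_{\nu,\gm}\colon Z_{\nu,\gm}\to Z^{T_\psi}_{\nu,\gm}$ from \rp{strata} with the saturation statement \rp{ind}(c) applied to each of the admissible tuples $\overline{w}_j$ and $\overline{x}_0\cdot\overline{w}_{\on{st}}$; \rl{fibr} then upgrades $p_\nu$ to a vector fibration $Y\cap Z_\nu\to Y\cap Z^{T_\psi}_\nu$ satisfying the saturation $p_\nu^{-1}(p_\nu(X\cap Z_\nu))\subset X$. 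For condition (ii), \rp{ind}(b) identifies each stratum intersection $X\cap Z_\nu^{T_\psi}$ with $\bigcup_j\Fl^{\leq\overline{w}_j^\psi}_{M_\psi^{\sc},\gm}$ inside the affine Springer fiber of the Levi $M_\psi^{\sc}$ (and similarly for $Y$). By \rl{ord1}(b,c) the tuples $\overline{w}_j^\psi$ are admissible and lose only one unit of regularity, so the inductive hypothesis for $M_\psi^{\sc}$ (which has strictly smaller semisimple rank, since the simple root paired positively with $\check\psi$ lies outside $\Phi^\psi$) yields injectivity of $H_i(X\cap Z_\nu^{T_\psi})\to H_i(\Fl_{M_\psi^{\sc},\gm})$, which factors through the finite-type intermediate $H_i(Y\cap Z_\nu^{T_\psi})$, giving the required injection on each stratum.

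The main obstacle will be the cascade of regularity thresholds and the careful handling of non-top strata $\nu<_\psi w_\psi$ (where \rp{ind}(b) is stated only at $\nu=w_\psi$). The regularity bookkeeping is finite in nature: each reduction, governed by \rl{ord1}(c), \rt{sch}(b), \rl{finite}, and \rl{seq}, loses only a bounded amount of regularity, and since the induction on the semisimple rank has finitely many steps, the initial $m$ can be taken to absorb all such losses. For the non-top strata, one either observes that the $U_\psi$-invariance argument underlying the proof of \rp{ind}(c) extends unchanged to arbitrary $\nu$ (providing the needed saturation there), or decomposes $X$ via its Bruhat-type refinement $X=\bigcup_j\bigcup_{w'\leq\overline{w}_j}Iw'$ and analyzes each Iwahori orbit within its $\psi$-stratum, reducing to the top-stratum case inductively.
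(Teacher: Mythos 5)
Your high-level framework---localization via \rco{crit} with the $T_\psi$-stratification $\{Z_\nu\}$, inducting on the semisimple rank of the Levi---does match the paper's, and the base case handling is correct. But the reduction you make at the outset discards two devices that the paper's argument cannot do without. The paper does not compare $X=\cup_j\Fl^{\leq\ov{w}_j}_\gm$ directly to $Y_i=\Fl^{\leq'\ov{x}_i}_\gm$; it shows one step at a time that $H_i(Z'_\gm\cup\Fl^{\leq'\ov{x}_{j-1}}_\gm)\to H_i(Z'_\gm\cup\Fl^{\leq'\ov{x}_j}_\gm)$ is injective, where $Z'=\cup_j\Fl^{\leq\ov{w}_j}$. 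Including $\Fl^{\leq'\ov{x}_{j-1}}_\gm$ on \emph{both} sides means the two subschemes coincide on every stratum with $\pi(\nu)_\psi\leq x_\psi$, so condition (ii) of \rco{crit} is vacuous there and only the single slice $\pi(\nu)_\psi=x_\psi+1$ requires the inductive hypothesis. In your setup $X$ is fixed while $Y_i$ grows, so $X\cap Z_\nu\subsetneq Y\cap Z_\nu$ on an unbounded range of strata. Moreover, the paper then applies \rl{ker}(a) together with \rp{bound}---whose proof depends on the Noetherianity of the Rees algebra $R(\qlbar[\La_\gm])$ and the finite generation of $R(H_i(\Fl_\gm))$ (\rco{Rees})---to remove $\Fl^{\leq'\ov{x}-r\ov{e}_\psi}_\gm$ and thereby cap $\pi(\nu)_\psi$ from below by $x_\psi-r$. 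You omitted this ingredient entirely.

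This omission is fatal to your regularity bookkeeping. To decompose $X\cap Z_\nu$ into pieces $\Fl^{\leq\ov{w}_t}\cap Z_{\nu,\gm}$ with $(\ov{w}_t)_\psi=\nu$ (so that \rp{ind}(b),(c) become applicable), you must invoke \rl{int}(b), which you did not list among your bounded-loss lemmas. Its regularity loss is $2d+r'$, where $d=\pi(\ov{w})_\psi-\langle\psi,\pi(u)\rangle$ is the distance of the stratum from the top. Without the truncation from \rp{bound}, $d$ is unbounded as $Y_i$ grows, so the losses are \emph{not} absorbed by a fixed initial $m$; they escalate without bound, and your claim to the contrary is incorrect. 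Finally, your fallback that ``the $U_\psi$-invariance argument underlying \rp{ind}(c) extends unchanged to arbitrary $\nu$'' is not justified: the proof of \rp{ind}(c) relies on \rp{ind}(a), which is specific to $\nu=w_\psi$, since only on the top stratum does membership in $\Fl^{\leq\ov{w}}$ reduce to the constraints from chambers $C\owns\psi$. The Iwahori-orbit refinement you mention in the alternative does not yield a closed $S$-invariant filtration compatible with \rco{crit}. The narrow band $x_\psi-r<\pi(\nu)_\psi\leq x_\psi+1$ produced by the one-step filtration and \rp{bound} is precisely what confines the analysis to strata where \rl{int} and \rp{ind} apply with uniformly bounded regularity loss.
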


\begin{proof}
Set $Z':=\bigcup_{j=1}^n \Fl^{\leq \ov{w}_j}\subseteq \Fl$. We want to
show that if each $\ov{w}_j$ is sufficiently regular, then the
natural map $H_i(Z'_{\gm})\to H_i(\Fl_{\gm})$ is injective for
all $i\in\B{Z}$. To make the proof more structural we will divide it into steps.

\smallskip

\noindent{\bf Step 1.} Let $\ov{x}_0\in\La^{\C{C}}$ be an admissible tuple constructed in
\rl{finite} and such that $\Fl^{\leq'\bar{x}_0}\subseteq \Fl^{\leq\ov{w}_1}$,
and let $\{\ov{x}_l\}_{l\geq 0}$ be a sequence of
admissible tuples from \rl{seq}. Moreover, it follows from
Lemmas~\ref{L:finite} and \ref{L:seq}, that each $\ov{x}_l$ is sufficiently
regular, if each $\ov{w}_j$ is sufficiently regular.

\smallskip

Notice that $\{\Fl^{\leq'\ov{x}_{l}}_{\gm}\}_{l\geq 0}$ form an
exhausting increasing union of closed subsets of $\Fl_{\gm}$, hence
it is enough to show that for every $l>0$ the map
\[
H_i(Z'_{\gm}\cup \Fl^{\leq'\ov{x}_{l-1}}_{\gm})\to
H_i(Z'_{\gm}\cup \Fl^{\leq'\ov{x}_{l}}_{\gm})
\]
is injective for all $l$. Using inclusion
\[
(Z'_{\gm}\cup \Fl^{\leq'\ov{x}_{l}}_{\gm})\sm (Z'_{\gm}\cup \Fl^{\leq'\ov{x}_{l-1}}_{\gm})\subseteq \Fl^{\leq'\ov{x}_{l}}_{\gm},
\]
we conclude from \rl{ker}(b) that it suffices to show
that the map
\[
H_i((Z'_{\gm}\cap \Fl^{\leq'\ov{x}_{l}}_{\gm})\cup
\Fl^{\leq'\ov{x}_{l-1}}_{\gm})\to H_i(\Fl^{\leq'\ov{x}_l}_{\gm})
\]
is injective. We set $\ov{x}:=\ov{x}_{l-1}$. Then
$\ov{x}_{l}=\ov{x}+\ov{e}_{\psi}$ for some $\psi\in\Psi$, and we want
to show that the map
\[
H_i((Z'_{\gm}\cap \Fl^{\leq'\ov{x}+\ov{e}_{\psi}}_{\gm})\cup
\Fl^{\leq'\ov{x}}_{\gm})\to H_i(\Fl^{\leq'\ov{x}+\ov{e}_{\psi}}_{\gm})
\]
is injective.

\smallskip

\noindent{\bf Step 2.} If $\check{\psi}\notin(\La_{\gm})_{\B{Q}}$, then we have an equality $\Fl^{\leq'\ov{x}}_{\gm}=\Fl^{\leq'\ov{x}+\ov{e}_{\psi}}_{\gm}$ (by \rl{trunc}), so the assertion is tautological.

\smallskip

From now on assume that $\check{\psi}\in(\La_{\gm})_{\B{Q}}$.
Let $r\in\B{N}$ be as in  \rp{bound}. Then, by \rl{ker}(a), it is
enough to show that the map
\[
H_{i,BM}([(Z'_{\gm}\cap \Fl^{\leq'\ov{x}+\ov{e}_{\psi}}_{\gm})\cup
\Fl^{\leq'\ov{x}}_{\gm}]\sm
 \Fl^{\leq'\ov{x}-r\ov{e}_{\psi}}_{\gm})\to
H_{i,BM}(\Fl^{\leq'\ov{x}+\ov{e}_{\psi}}_{\gm}\sm
 \Fl^{\leq'\ov{x}-r\ov{e}_{\psi}}_{\gm})
\] is injective.

\smallskip

\noindent{\bf Step 3.}  We are going to apply the criterion of \rco{crit} in the case
 $Z=\Fl$, $S=T_{\psi}$,
\[
X= [(Z'_{\gm}\cap \Fl^{\leq'\bar{x}+\ov{e}_{\psi}}_{\gm})\cup
\Fl^{\leq'\ov{x}}_{\gm}]\sm
 \Fl^{\leq'\ov{x}-r\ov{e}_{\psi}}_{\gm},
\]
$Y=\Fl^{\leq'\ov{x}+\ov{e}_{\psi}}_{\gm}\sm
\Fl^{\leq'\ov{x}-r\ov{e}_{\psi}}_{\gm}$, and
$\{Z_{\nu}\}_{\nu\in\wt{W}_{\psi}}$ is the stratification of  $\Fl$
by $L(P^{\sc}_{\psi})$-orbits, considered in Section~\re{strata}.

\smallskip

Since $X$ and $Y$ are locally closed subschemes of $Z$ of finite type over $k$, it remains to show that $X$ and $Y$ are $S$-invariant and properties (a)-(c) of \rco{crit} are satisfied. Property (a) was mentioned in Section~\re{strata}(g).

\smallskip

\noindent{\bf Step 4.} Next we are going to show property (b). Observe that for every
stratum $Z_{\nu}$ such that $Y\cap Z_{\nu}\neq\emptyset$, we have
$x_{\psi}-r<\pi(\nu)_{\psi}\leq x_{\psi}+1$. If
$\pi(\nu)_{\psi}\leq x_{\psi}$, then we have $X\cap Z_{\nu}=Y\cap
Z_{\nu}$, in which case property (b) clearly holds.

\smallskip

Assume now that $\pi(\nu)_{\psi}=x_{\psi}+1$. In this case, we have an equality
\begin{equation} \label{Eq:xznu}
X\cap Z_{\nu}=(Z'\cap \Fl^{\leq'\ov{x}+\ov{e}_{\psi}}\cap \Fl^{\leq_{\psi}\nu}) \cap Z_{\nu,\gm},
\end{equation}
and it is enough to show that the composition
\begin{equation} \label{Eq:inj}
H_{i,BM}(X\cap Z^{S}_{\nu})\to H_{i,BM}(Y\cap
Z^{S}_{\nu})\to H_{i,BM}(Z^{S}_{\nu,\gm})
\end{equation}
is injective.

\smallskip

By a combination of \rl{int}(a),(b) and \rco{adm}(f), the intersection
$Z'\cap \Fl^{\leq'\ov{x}+\ov{e}_{\psi}}\cap \Fl^{\leq_{\psi}\nu}$ decomposes
as a finite union $\bigcup_t \Fl^{\leq\ov{u}_t}$, where each
$\ov{u}_t$ is sufficiently regular. Therefore \form{xznu} implies that the reduced intersection $X\cap Z_{\nu}$ is of the
form $\bigcup_t (\Fl_{\gm}^{\leq\ov{u}_t}\cap Z_{\nu})$, where each $\ov{u}_t$
is sufficiently regular, and $(\ov{u}_t)_{\psi}=\nu$. Hence the reduced intersection
$X\cap Z^{S}_{\nu}$ is of the form $\bigcup_t \Fl_{M^{\sc}_{\psi},\gm}^{\leq\ov{u}^{\psi}_t}$ (by \rp{ind}(b)),
and each $\ov{u}^{\psi}_t\in\wt{W}^{\psi}$ is sufficiently regular (by \rl{ord1}(d)).

\smallskip

By induction on the semisimple rank of $G$, we can assume that \rt{main} holds for the Levi subgroup $M_{\psi}$.
Therefore the map
\[
H_{i,BM}(\bigcup_t \Fl_{M^{\sc}_{\psi},\gm}^{\leq\ov{u}^{\psi}_t})\to H_{i,BM}(\Fl_{M^{\sc}_{\psi},\gm})
\]
is injective, from which the injectivity of \form{inj} and hence property (b) follows.

\smallskip

\noindent{\bf Step 5.}
It remains to show $X$ and $Y$ are $S$-invariant and satisfy property (c). Recall that in \rp{strata} we
constructed an $S$-equivariant retraction $p_{\nu,\gm}:Z_{\nu,\gm}\to Z_{\nu,\gm}^{S}$,
which is a composition of affine bundles, hence a vector
fibration.

\smallskip

By \rl{fibr} it is enough to show that $p_{\nu}$ satisfies inclusions of sets
\begin{equation} \label{Eq:incl}
p_{\nu}^{-1}(p_{\nu}(Y\cap Z_{\nu}))\subseteq Y\cap Z_{\nu}\text{
and }p_{\nu}^{-1}(p_{\nu}(X\cap Z_{\nu}))\subseteq X\cap Z_{\nu}.
\end{equation}

We will show the inclusion \form{incl} for $X$, while the
inclusion for $Y$ is similar, but simpler. Observe that if $Y\cap
Z_{\nu}$ is non-empty, then the intersection $X\cap Z_{\nu}$ is either
$(\Fl^{\leq'\ov{x}}\cap \Fl^{\leq_{\psi}\nu}) \cap Z_{\nu,\gm}$ or
$(Z'\cap \Fl^{\leq'\ov{x}+\ov{e}_{\psi}}\cap \Fl^{\leq_{\psi}\nu})\cap
Z_{\nu,\gm}$.

In both cases, a combination of \rl{int} and \rco{adm}(f) implies that  $X\cap Z_{\nu}$ is of the
form $\bigcup_t (\Fl^{\leq\ov{u}_t}\cap Z_{\nu,\gm})$, where each $\ov{u}_t$
is sufficiently regular, and $(\ov{u}_t)_{\psi}=\nu$. Thus to show inclusion \form{incl} for $X$ it suffices to check inclusions of sets  $p_{\nu}^{-1}(p_{\nu}(\Fl^{\leq\ov{u}_t}\cap Z_{\nu,\gm}))\subseteq
\Fl^{\leq\ov{u}_t}$ for all $t$. But this follows from \rp{ind}(c).
\end{proof}






\end{document}